\newtheorem{thm}{Theorem}[subsection]
\newtheorem{lem}[thm]{Lemma}
\newtheorem{coro}[thm]{Corollary}
\newtheorem{prop}[thm]{Proposition}
\theoremstyle{definition}
\newtheorem{defn}[thm]{Definition}
\newtheorem{remark}[thm]{Remark}
\newtheorem{example}[thm]{Example}
\newcommand{\C}{\mathscr{C}}
\newcommand{\h}{\mathrm{h}}
\newcommand{\D}{\mathscr{D}}
\renewcommand{\Set}{\mathrm{Set}}
\newcommand{\op}{\mathrm{op}}
\newcommand{\colim}{\mathrm{colim}}
\title{Adjoint Functor Theorems for $\infty$-categories}	
\author{Hoang Kim Nguyen}
\address{\newline
H.~K.~Nguyen \newline
Fakult\"{a}t f\"ur Mathematik \\
Universit\"{a}t Regensburg \\
D-93040 Regensburg, Germany}
\email{hoang-kim.nguyen@ur.de} 
\author{George Raptis}
\address{\newline
G. Raptis \newline
Fakult\"{a}t f\"ur Mathematik \\
Universit\"{a}t Regensburg \\
D-93040 Regensburg, Germany}
\email{georgios.raptis@ur.de}
\author{Christoph Schrade}
\address{\newline
C. Schrade \newline
Mathematisches Institut \\
WWU M\"{u}nster \\
D-48149 M\"{u}nster, Germany}
\email{cschrad1@uni-muenster.de}
\begin{document}

\begin{abstract}
Adjoint functor theorems give necessary and sufficient conditions for a functor 
to admit an adjoint. In this paper we prove general adjoint functor theorems for functors between $\infty$-categories. One of our main results is an $\infty$-categorical generalization of Freyd's classical General Adjoint Functor Theorem. 
As an application of this result, we recover Lurie's adjoint functor theorems for presentable $\infty$-categories. We also discuss the comparison between adjunctions of $\infty$-categories and homotopy adjunctions, and give a treatment of Brown representability for $\infty$-categories based on Heller's purely categorical formulation of the classical Brown representability theorem.   
\end{abstract}

\maketitle
\setcounter{tocdepth}{1}
\tableofcontents

\section{Introduction}

Adjoint functor theorems give necessary and sufficient conditions for a functor between suitable categories to have an adjoint. They are fundamental results 
in category theory both for their theoretical value as well as for their applications. The most general and well-known adjoint functor theorems are Freyd's \emph{General} and \emph{Special Adjoint Functor Theorem} \cite{Fre, ML}. Other well-known adjoint functor theorems include those specialized to locally presentable categories -- these can also be regarded as useful non-trivial specializations of Freyd's theorems.  

\smallskip 

The purpose of this paper is to prove analogous adjoint functor theorems for functors between $\infty$-categories. Our first main result (Theorem \ref{GAFT}) is an $\infty$-categorical generalization of Freyd's General Adjoint Functor Theorem  and it provides a necessary and sufficient condition, in the form of Freyd's original \emph{solution set condition}, for a limit-preserving functor between $\infty$-categories to admit a left adjoint. In addition, by employing a stronger form of the solution set condition, we find in this higher categorical setting a second and closely related adjoint functor theorem for functors which only preserve finite limits (Theorem \ref{finGAFT}). Both proofs of these theorems are quite elementary, and are based on certain useful criteria for the existence of initial objects, very much in the spirit of the proof of Freyd's classical theorem.   

As in ordinary category theory, the solution set condition can generally be difficult to verify in practice. In ordinary category theory, this condition 
takes a more manageable and simplified form if the categories satisfy additional assumptions. In the $\infty$-categorical setting, as an application of the general adjoint functor theorem, we also obtain specialized adjoint functor theorems, with simplified conditions, in the case of $\infty$-categories which have appropriate additional properties (Theorems \ref{RAFT} and \ref{LAFT}). In particular, these results recover the adjoint functor theorems for presentable 
$\infty$-categories shown by Lurie \cite{HTT}. 

\smallskip

A natural question about adjoint functors in the context of $\infty$-categories is that of comparing adjunctions of $\infty$-categories with the weaker notion of a  functor which becomes an adjoint on the homotopy category. While an adjunction of $\infty$-categories $F \colon \C \rightleftarrows \D \colon G$ induces an adjunction between the homotopy categories $\h F \colon \h \C \rightleftarrows \h \D \colon \h G$, it is false in general that the existence of an adjoint can be detected on the homotopy category. An interesting consequence of our second general adjoint functor theorem is that this converse actually holds when $\D$ admits finite limits and $G$ preserves them (Theorem \ref{finitehomotopyadjunction} -- also Remark \ref{homotopy-adjunction-rem}). 

\smallskip

Another closely related question is that of Brown representability insofar as this can be regarded as a question about adjunctions between homotopy (or other, ordinary) categories. We give a treatment of Brown representability for $\infty$-categories essentially following Heller's purely categorical formulation of the classical Brown representability theorem \cite{He}
(Theorem \ref{Heller}). This approach offers a common unifying perspective for representability theorems (or adjoint functor theorems) for locally presentable categories (Example \ref{loc-pres-cat-2}) and for triangulated categories that arise from stable presentable $\infty$-categories (Theorem \ref{mainTHM} and Corollary \ref{mainTHM-2}). 

\bigskip

The paper is organized as follows. In Section 2, we recall the definition and basic properties of initial objects in an $\infty$-category and establish two criteria 
which ensure the existence of such objects. These criteria (Propositions \ref{finitecriterion} and \ref{initialcriterion}) are based on the weaker 
notions of an \emph{h-initial object} and a \emph{weakly initial set} respectively, and form the technical backbone of the proofs in later sections. 
In Section 3, after a short review of adjoint functors in the higher categorical context, we state and prove two general adjoint functor theorems (Theorems \ref{GAFT} and \ref{finGAFT}). We also discuss the comparison between adjunctions of $\infty$-categories and homotopy adjunctions (leading to Theorem \ref{finitehomotopyadjunction}). In Section 4, we obtain specialized adjoint functor theorems for interesting classes of $\infty$-categories which satisfy additional properties (for example, presentable $\infty$-categories). Since these additional properties are not symmetric, we have in this case different theorems characterizing respectively left and right adjoint functors (Theorems \ref{RAFT} 
and \ref{LAFT}). These theorems are consequences of Theorem \ref{GAFT} and they also recover the adjoint functor theorems for presentable $\infty$-categories in \cite{HTT}. Lastly, in Section 5, we give a treatment of Brown representability for $\infty$-categories following Heller's approach \cite{He}. In particular, this part of the work makes no claim to originality. 
After some preliminaries on Brown representability and its connection with adjoint functor theorems, we present Brown representability theorems for compactly generated $\infty$-categories (Theorem \ref{Heller} -- see Definition \ref{def-comp-gen}) and for stable presentable $\infty$-categories (Theorem \ref{mainTHM}).

\bigskip

\noindent \textbf{Set-theoretical preliminaries.}
We work in a model $\mathbb{V}$ of $\mathrm{ZFC}$-set theory which contains an inaccessible cardinal. We fix the associated Grothendieck universe $\mathbb U \in \mathbb{V}$, which we use to distinguish between small and large sets. More specifically, a set is called \emph{small} if it belongs to $\mathbb U$. Our results do not depend on such set-theoretical assumptions in any essential way, 
but it will be convenient to follow this standard convention for the purpose of
simplifying the exposition.  

A simplicial set is a functor $\Delta^{\op}\to \Set_{\mathbb V}$. We will assume familiarity with the Joyal and the Kan-Quillen model structures on the category
of simplicial sets.
A simplicial set $K \colon \Delta^{\op} \to \Set_{\mathbb V}$ is \emph{small} if $K_n \in \mathbb U$ for each $[n]\in \Delta^{\op}$. An $\infty$-category is \emph{essentially small} if it is (Joyal) equivalent to a small simplicial set. 
An $\infty$-category is called \emph{locally small} if for any small set of objects, the full subcategory that it spans is essentially small (see also \cite[5.4.1]{HTT}). 

An $\infty$-category is (finitely) \emph{complete} (resp. \emph{cocomplete})  if it admits all limits (resp. colimits) indexed by small (finite) simplicial sets. A functor is called (finitely) \emph{continuous} 
(resp. \emph{cocontinuous}) if it preserves all such limits (resp. colimits).

\medskip

\noindent \textbf{Acknowledgements.} We thank Denis-Charles Cisinski for his comments and for his interest in this paper. We also thank Ji\v{r}\'{i} Rosick\'{y} for helpful discussions.  The authors were supported by \emph{SFB 1085 -- Higher Invariants} (Universit\"at Regensburg) funded by the DFG. 

\section{Criteria for the existence of initial objects}

\subsection{Initial objects}
We recall the definition of an initial object in an $\infty$-category and review some of its basic properties. Let $\C$ be an $\infty$-category. An object $x \in \C$ is \emph{initial} if the canonical map
\[
	\C_{x/}\to \C
\]
is a trivial fibration. Equivalently, an object $x \in \C$ is initial if and only if for any $n \geq 1$ and any commutative diagram of the form
\[
\begin{tikzcd}
\Delta^{\{0\}}\drar{x} \dar  & \\
\partial \Delta^n \rar \dar & \C\\
\Delta^n \urar[dashed] &
\end{tikzcd}
\]
there exists an extension as indicated by the dotted arrow \cite[Proposition 4.2]{Joy}. The fiber of the map $\C_{x/} \to \C$ at $y \in \C$ is (a model for) the mapping space $\mathrm{map}_{\C}(x, y)$. Since this map is always a left fibration, it is a trivial fibration if and only if its fibers are contractible Kan complexes (see \cite[Lemma 2.1.3.4]{HTT}). As a consequence, $x \in \C$ is initial if and only if the mapping space $\mathrm{map}_{\C}(x,y)$ is contractible for all $y \in \C$. The full subcategory of 
$\C$ which is spanned by the initial objects is either empty or a contractible Kan 
complex (see \cite[Proposition 4.4]{Joy} or \cite[1.2.12.9]{HTT}). An object $x \in \C$ is \emph{terminal} if it defines an initial object in the opposite $\infty$-category $\C^{\op}$. Following \cite{Joy}, a \emph{limit} of a diagram $p \colon K \to \C$, where $K$ is a simplicial set and $\C$ is an $\infty$-category, is by definition a terminal object of the slice $\infty$-category $\C_{/p}$, that is, a terminal cone $K^\triangleleft:=\Delta^0 \star K \to \C$ over the diagram $p$. We will often refer to the evaluation at the cone-object as the limit of the diagram. 

\medskip

We will need the following well-known characterization of initial objects in an $\infty$-category. For completeness, we include an elementary proof based on the definitions given above (see also \cite[Proposition 4.2]{Joy} and \cite[Lemma 4.2.3]{RV} for closely related results).

\begin{prop}\label{idlimit}
Let $\C$ be an $\infty$-category. Then $x  \in \C$ is an initial object if and only if the identity functor $\mathrm{id}: \C \to \C$ admits a limit whose 
cone-object is $x \in \C$. 
\end{prop}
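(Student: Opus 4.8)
The plan is to prove the equivalence in both directions, relating the condition that $x$ is initial to the condition that the constant cone at $x$ is a limit of $\mathrm{id}_\C$. The key observation is that a limit of $\mathrm{id}_\C$ is, by definition, a terminal object of the slice $\C_{/\mathrm{id}}$, and an object of this slice over the cone-point $x$ is precisely a natural transformation from the constant functor at $x$ to $\mathrm{id}_\C$, i.e.\ a coherent family of maps $x \to y$ for all $y \in \C$. The heuristic is that such a coherent family being terminal should force each component $x \to y$ to witness $x$ as initial.

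First I would treat the easier implication. Suppose $x \in \C$ is initial. Since the full subcategory of initial objects is contractible (as recalled above) and in particular $x$ is essentially unique, I expect to build the required limit cone directly. Because $x$ is initial, the map $\C_{x/} \to \C$ is a trivial fibration; I would use this to produce, coherently, a natural transformation $\mathrm{const}_x \Rightarrow \mathrm{id}_\C$, giving an object $\xi \in \C_{/\mathrm{id}}$ whose cone-object is $x$. I would then verify that $\xi$ is terminal in $\C_{/\mathrm{id}}$ by checking the lifting property against $\partial \Delta^n \hookrightarrow \Delta^n$ using the initiality-lifting criterion recalled in the excerpt, reducing the extension problem for cones over $\mathrm{id}_\C$ to extension problems that are solvable because $x$ is initial.

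For the converse, suppose $\mathrm{id}_\C$ admits a limit with cone-object $x$, i.e.\ a terminal $\xi \in \C_{/\mathrm{id}}$ lying over $x$. The natural transformation underlying $\xi$ has a component $\xi_x \colon x \to x$. The crucial step is to show that $\xi_x$ is homotopic to $\mathrm{id}_x$: this follows from terminality, since $\mathrm{id}_x$ also determines a point of the relevant mapping space and terminality forces the space of endomorphism-cones at $x$ to be contractible, so the two agree up to homotopy. Granting this, for any $y \in \C$ I would show $\mathrm{map}_\C(x,y)$ is contractible by using that composition with the components of $\xi$ gives a map back and forth exhibiting $\mathrm{map}_\C(x,y)$ as a retract (up to homotopy) of the contractible space of cones, and the naturality encoded in $\xi$ identifies the relevant composite with the identity. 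By the characterization recalled above (contractibility of all mapping spaces out of $x$), this yields that $x$ is initial.

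The main obstacle I anticipate is the converse direction, specifically producing and exploiting the homotopy $\xi_x \simeq \mathrm{id}_x$ and then leveraging naturality to control $\mathrm{map}_\C(x,y)$ for arbitrary $y$. Working directly with the slice $\C_{/\mathrm{id}}$ and the constant-cone description requires care in handling the coherence data, and the cleanest route may be to phrase everything in terms of the trivial-fibration/lifting criterion for initial objects rather than manipulating mapping spaces by hand, so that both implications reduce to solving the same class of horn-type extension problems. I would aim to keep the argument elementary and self-contained, consistent with the stated goal of the proposition.
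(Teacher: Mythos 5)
Your overall framing agrees with the paper's (both directions are played out in the slice $\C_{/\mathrm{id}}$, the cone with cone-object $x$ is produced by lifting against the trivial fibration $\C_{x/} \to \C$, and initiality is tested by sphere-filling), but both of your crux steps have genuine gaps, and they are the same gap in disguise. In the direction ``limit $\Rightarrow$ initial'', your plan to show $\xi_x \simeq \mathrm{id}_x$ and then argue pointwise via naturality only controls $\pi_0$: the homotopies $f \simeq f \circ \xi_x \simeq \xi_y$ show each $\mathrm{map}_{\C}(x,y)$ is non-empty and connected, i.e.\ that $x$ is \emph{h-initial}, which is strictly weaker than initial (cf.\ Proposition \ref{finitecriterion}, which is exactly about when this gap can be closed, and requires finite limits). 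Your proposed repair --- a retract of ``the contractible space of cones'' --- is not workable as stated: terminality makes $\mathrm{map}_{\C_{/\mathrm{id}}}(\gamma,\xi)$ contractible, but evaluation of a cone morphism at the cone-object produces maps \emph{into} $x$, not out of $x$, so $\mathrm{map}_{\C}(x,y)$ is not visibly a retract of any of these spaces. Also, even the step $\xi_x \simeq \mathrm{id}_x$ needs an ingredient you do not supply: terminality compares two points of $\mathrm{map}_{\C_{/\mathrm{id}}}(\xi,\xi)$, so you must first exhibit $\xi_x$ as the evaluation of a cone \emph{endomorphism}; the paper does this with the canonical morphism of cones $\gamma \to \delta$ built from $\Delta^1 \star \C \cong \Delta^0 \star \Delta^0 \star \C \xrightarrow{\mathrm{id} \star \delta} \Delta^0 \star \C \xrightarrow{\gamma} \C$. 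The paper in fact never needs $\xi_x \simeq \mathrm{id}_x$, only that $\phi(\ast)$ is an equivalence: it then solves the sphere-filling problem in \emph{all} dimensions at once by applying $\Delta^0 \star (-)$ to a diagram $\partial\Delta^n \to \C$ based at $x$ and composing with the limit cone, producing a $\Lambda^{n+1}_0$-horn whose initial edge is the equivalence $\phi(\ast)$; the filler exists by Joyal's theorem on outer horns with equivalence edges \cite[Theorem 2.2]{Joy}. That theorem is the missing engine in your sketch.

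The same issue undermines the direction ``initial $\Rightarrow$ limit''. Terminality of your cone $\lambda$ in $\C_{/\mathrm{id}}$ amounts to extensions along $\partial\Delta^n \star \C \hookrightarrow \Delta^n \star \C$ (over the identity on $\C$) in which $\lambda$, and hence $x$, sits at the \emph{last} vertex of $\Delta^n$. But initiality of $x$, via the trivial fibration $\C_{x/} \to \C$, solves precisely those extension problems in which $x$ occupies the \emph{first} (cone) vertex --- indeed $\partial\Delta^n \star \C \hookrightarrow \Delta^n \star \C$ decomposes as $\Delta^0 \star A \cup_A B \hookrightarrow \Delta^0 \star B$ only with respect to the $0$-th vertex. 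So your claim that these problems are ``solvable because $x$ is initial'' fails as stated: the positions do not match, and no direct reduction is available. The paper's workaround is again nontrivial: it glues on an extra join coordinate by precomposing with $\mathrm{id} \star \lambda$, translates by adjunction into a $\Lambda^{n+1}_{n+1}$-horn problem in $\C_{/\mathrm{id}}$ whose last edge is an endomorphism of $\lambda$ covering $\mathrm{id}_x$, observes that this edge is an equivalence because the right fibration $\C_{/\mathrm{id}} \to \C$ is conservative, and once more invokes \cite[Theorem 2.2]{Joy}. In short, your proposal correctly identifies the objects to manipulate but is missing the two ideas that make the paper's proof go through: the canonical endomorphism of cones (together with conservativity of right fibrations) and the special outer-horn filling along equivalence edges.
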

\begin{proof}
First note that for any pair of cones $\gamma, \delta: \Delta^0 \star \C \to \C$ over the identity functor, there is a canonical morphism of cones $\gamma \to \delta$ which is given by
\[
\Delta^1 \star \C \cong \Delta^0 \star \Delta^0 \star \C \xrightarrow{\mathrm{id} \star \delta} \Delta^0 \star \C \xrightarrow{\gamma} \C.
\]
Suppose that there is a limit cone over the identity functor 
$$\lambda: \Delta^0 \star \C \to \C$$
with cone-object $\lambda(\ast) = \colon x \in \C$. Then we obtain a canonical endomorphism of cones $\phi: \lambda \to \lambda$ as explained above. Since $\lambda$ is a terminal object in the category of cones over the identity functor, this morphism is an equivalence in $\C_{/\mathrm{id}}$. In particular, the evaluation of this morphism at the cone-object
$\phi(\ast): x \to x$ is an equivalence in $\C$. We need to show that each commutative diagram
\begin{equation} \label{diagram-1}
\begin{tikzcd}
\Delta^{\{0\}} \drar{x}\dar & \\ 
\partial \Delta^n \rar \dar & \C\\
\Delta^n \urar[dashed] &
\end{tikzcd}
\end{equation}
admits an extension as indicated by the dotted arrow. Applying $\Delta^0 \star (-)$ to this diagram and then composing with the cone $\lambda$, we obtain  
a new diagram as follows
\[
\begin{tikzcd}
\Delta^{\{0,1\}} \drar{\phi(\ast)}\dar & \\ 
\Lambda^{n+1}_{0}\rar \dar & \C.\\
\Delta^{n+1} \urar[dashed] &
\end{tikzcd}
\]
Since $\phi(\ast)$ is an equivalence, it follows from \cite[Theorem 2.2]{Joy} that the required extension exists. The restriction of this extension along the inclusion $\Delta^n \subset \Delta^0 \star \Delta^n = \Delta^{n+1}$ gives the required extension for the original diagram \eqref{diagram-1}. 

Conversely, suppose that $x \in \C$ is an initial object so that the map $\C_{x/}\to \C$ is a trivial fibration. First we find a cone over the identity with cone-object $x \in \C$ as a solution of the following lifting problem
\[
\begin{tikzcd}
\Delta^0 \rar{1_x} \dar[swap]{x} & \C_{x/}\dar \\
\C \rar[swap]{\mathrm{id}} \urar[dashed]{\lambda} & \C.
\end{tikzcd}
\]
We claim that $\lambda$ defines a terminal object in $\C_{/ \mathrm{id}}$. For this, it suffices to show that for each commutative diagram, $n \geq 1$,
\begin{equation} \label{diagram-2}
\begin{tikzcd}
\Delta^{\{n\}}\star \C \drar{\lambda} \dar & \\
\partial \Delta^n \star  \C \rar \dar & \C\\
\Delta^n\star \C \urar[dashed] &
\end{tikzcd}
\end{equation}
there is an extension as indicated by the dotted arrow. Here we have used the same notation $\lambda$ for the map which is adjoint to the lift above. We extend this diagram to a new diagram as follows
\[
\begin{tikzcd}
\Delta^1 \star \C \cong \Delta^{\{n\}} \star \Delta^0 \star \C \rar{\mathrm{id} \star \lambda}\dar & \Delta^{\{n\}}\star \C \drar{\lambda} \dar & \\
\Lambda^{n+1}_{n+1}\star \C\cong\partial \Delta^n \star \Delta^0 \star \C \rar{\mathrm{id} \star \lambda}\dar & \partial \Delta^n \star \C \rar \dar & \C.\\
\Delta^{n+1}\star \C \cong \Delta^n \star \Delta^0 \star \C \rar{\mathrm{id} \star \lambda} &\Delta^n \star \C \urar[dashed] &
\end{tikzcd}
\]
By adjunction, the composite extension problem corresponds to finding an extension in the following diagram 
\begin{equation} \label{diagram-3}
\begin{tikzcd}
\Delta^{\{n,n+1\}} \drar \dar & \\
\Lambda^{n+1}_{n+1} \rar \dar & \C_{/\mathrm{id}}.\\
\Delta^{n+1}\urar[dashed] &
\end{tikzcd}
\end{equation}
Note that by construction the morphism $\Delta^{\{n,n+1\}} \to \C_{/\mathrm{id}}$ is an endomorphism of the cone $\lambda$. This is an equivalence since the underlying morphism on cone-objects is the identity of $x$ and $\C_{/\mathrm{id}} \to \C$ is conservative (as a right fibration). Thus, again by \cite[Theorem 2.2]{Joy}, there exists an extension in \eqref{diagram-3} as required. The adjoint map of this extension restricts along $\Delta^n \subset \Delta^n \star \Delta^0 = \Delta^{n+1}$ to an extension for the original diagram \eqref{diagram-2}.
\end{proof}

\subsection{$h$-initial objects}

We consider the following weakening of the notion of an initial object in an $\infty$-category $\C$. 

\begin{defn}
Let $\C$ be an $\infty$-category. An object $x \in \C$ is \emph{h-initial} if it defines an initial object in the homotopy category $\h \C$. Dually, an object is called \emph{h-terminal} if it defines a terminal object in $\h\C$.
\end{defn}

We refer to \cite{Joy}, \cite{HTT} for the construction and basic properties of the homotopy category. Note that an object $x \in \C$ is $h$-initial if and only if the mapping space $\mathrm{map}_{\C}(x, y)$ is non-empty and connected for all $y \in \C$. This property can also be stated in terms of extension problems as follows: $x \in \C$ is $h$-initial if and only if for any $1 \leq n \leq 2$ and any commutative diagram of the form
\[
\begin{tikzcd}
\Delta^{\{0\}}\drar{x} \dar  & \\
\partial \Delta^n \rar \dar & \C\\
\Delta^n \urar[dashed] &
\end{tikzcd}
\]
there exists an extension as indicated by the dotted arrow. Clearly an initial object is also $h$-initial, but the converse is false in general. The following result shows the converse under appropriate assumptions on $\C$.

\begin{prop}\label{finitecriterion}
Let $\C$ be an $\infty$-category which admits finite limits. Then an object $x \in \C$ is $h$-initial if and only if it is initial.
\end{prop}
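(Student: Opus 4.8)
The forward implication is immediate: if $x$ is initial then $\mathrm{map}_{\C}(x,y)$ is contractible, hence non-empty and connected, for every $y$, so $x$ is $h$-initial. The content is the converse, and by the mapping space characterizations recalled above it amounts to upgrading ``$\mathrm{map}_{\C}(x,y)$ is non-empty and connected'' to ``$\mathrm{map}_{\C}(x,y)$ is contractible'', for every $y \in \C$. The plan is to show that all homotopy groups of $M_y := \mathrm{map}_{\C}(x,y)$ vanish, and for this I would exploit the finite limits of $\C$ to convert higher connectivity statements about $M_y$ into the $\pi_0$-statements that $h$-initiality already controls.

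Concretely, since $\C$ is finitely complete and $\partial \Delta^n$ is a finite simplicial set, the cotensor $y^{\partial \Delta^n} := \lim(\mathrm{const}_y \colon \partial \Delta^n \to \C)$ exists in $\C$. The corepresentable functor $\mathrm{map}_{\C}(x,-)$ preserves limits, so for every $n$ there is a natural equivalence
\[
\mathrm{map}_{\C}\big(x, y^{\partial \Delta^n}\big) \;\simeq\; \mathrm{Map}\big(\partial \Delta^n, M_y\big),
\]
the right-hand side being the function space of maps of spaces (the cotensor computed in spaces), which depends only on the homotopy type $\partial \Delta^n \simeq S^{n-1}$. Since $x$ is $h$-initial, the left-hand side is non-empty and connected; hence $\mathrm{Map}(S^{n-1}, M_y)$ is connected for every $n \geq 1$.

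It then remains to deduce that $M_y$ is contractible. Connectivity of $\mathrm{Map}(S^{n-1}, M_y)$ says that the set of free homotopy classes $[S^{n-1}, M_y]$ is a single point; as $M_y$ is connected this set is the quotient $\pi_{n-1}(M_y)/\pi_1(M_y)$, and because the $\pi_1$-action is by automorphisms it fixes the trivial element, whose orbit is therefore a single point, forcing $\pi_{n-1}(M_y) = 0$ for all $n \geq 2$ (the case $n=1$ merely recovers connectedness). Thus $M_y$ is a non-empty connected Kan complex with vanishing higher homotopy groups, hence contractible; since $y$ was arbitrary, $x$ is initial. The main obstacle I anticipate is not the homotopy-group bookkeeping, which is routine, but setting up the middle step cleanly: verifying that cotensors by finite simplicial sets exist under finite completeness and that $\mathrm{map}_{\C}(x,-)$ carries $y^{\partial \Delta^n}$ to the function space $\mathrm{Map}(\partial \Delta^n, M_y)$. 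One could instead phrase the same idea purely through extension problems against the inclusions $\partial \Delta^n \hookrightarrow \Delta^n$, but the cotensor formulation makes the reduction to $h$-initiality most transparent.
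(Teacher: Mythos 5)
Your proof is correct and takes essentially the same approach as the paper: both arguments use finite completeness to form cotensors $y^K$ for finite simplicial sets $K$ and the natural equivalence $\mathrm{map}_{\C}(x, y^K) \simeq \mathrm{Map}(K, \mathrm{map}_{\C}(x,y))$ (the step you flag as the main obstacle is exactly what the paper handles by citing \cite[Corollary 4.4.4.9]{HTT}), then deduce contractibility from connectivity of the function spaces out of spheres. Your explicit bookkeeping with $[S^{n-1}, M_y] \cong \pi_{n-1}(M_y)/\pi_1(M_y)$ merely spells out the final deduction that the paper leaves implicit.
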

\begin{proof}
Suppose that $x \in \C$ is $h$-initial. Then for any object $y\in \C$ the mapping space $\mathrm{map}_{\C}(x,y)$ is non-empty and connected. Since $\C$ admits finite limits, for any object $y \in \C$ and any finite simplicial set $K$, there exists an object $y^K \in \C$ such that there is a natural isomorphism in the homotopy category of spaces,
\[
\mathrm{map}_{\C}(x,y^K) \cong \mathrm{map}_\mathcal S(K,\mathrm{map}_{\C}(x,y))
\]
where $\mathcal S$ denotes the $\infty$-category of spaces. (See \cite[Corollary 4.4.4.9]{HTT}.) In particular, since $x$ is $h$-initial, these mapping spaces are non-empty and connected for any finite simplicial set $K$. It follows that $\mathrm{map}_{\C}(x, y)$ is contractible for any $y \in \C$, and hence $x$ is an initial object. 
\end{proof}

\subsection{Weakly initial sets} We consider also a further weakening of the notion of an initial object in an $\infty$-category. 

\begin{defn}
Let $\C$ be an $\infty$-category. A set of objects $S \subset \C$ is \emph{weakly initial} if for any object $y \in \C$ there is $x_s \in S$ such that the mapping space $\mathrm{map}_{\C}(x_s, y)$ is non-empty. Dually, $S$ is \emph{weakly terminal} if it defines a weakly initial set of objects in $\mathscr C^{\op}$, that is, if for any $y\in \C$ there exists $x_s\in S$ such that  the mapping space $\mathrm{map}_\C(y,x_s)$ is non-empty.
\end{defn}

Clearly an $h$-initial object defines a weakly initial set consisting of a single object, but a weakly initial set which consists of a single object is 
not $h$-initial in general. The property that the singleton set $\{x\} \subset \C$ is weakly initial (i.e., $x$ is a \emph{weakly initial object}) says that for any commutative diagram of the form
\[
\begin{tikzcd}
\Delta^{\{0\}}\drar{x} \dar  & \\
\partial \Delta^1 \rar \dar & \C\\
\Delta^1 \urar[dashed] &
\end{tikzcd}
\]
there exists an extension as indicated by the dotted arrow. If $S \subset \C$ is a 
weakly initial set which is small and $\C$ admits small products, then the object $\prod_{x_s \in S} x_s \in \C$ is a weakly initial object in $\C$. 

The following result shows that the existence of a weakly initial set implies the existence of an initial object under appropriate assumptions on $\C$. A related result can be found in \cite[Lemma 3.2.6]{BHH}.

\begin{prop}\label{initialcriterion}
Let $\C$ be an $\infty$-category which is locally small and complete. Then $\C$ admits an initial object if and only if it admits a small weakly initial set.
\end{prop}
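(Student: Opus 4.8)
The \emph{only if} direction is immediate: if $x \in \C$ is initial then $\mathrm{map}_{\C}(x,y)$ is contractible, in particular non-empty, for every $y$, so the singleton $\{x\}$ is a small weakly initial set.

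For the converse the plan is to dualize Freyd's classical construction of an initial object, working homotopy-coherently and using Proposition \ref{finitecriterion} to reduce the essential point to a statement about $\pi_0$ of mapping spaces. First, since $\C$ is complete it admits small products, so --- as already observed above --- the object $P := \prod_{x_s \in S} x_s$ is a weakly initial object. The aim is then to build an $h$-initial object out of $P$: because $\C$ admits finite limits, Proposition \ref{finitecriterion} will promote any $h$-initial object to a genuine initial object, so it suffices to produce an $h$-initial object.

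To produce it I would form the $\infty$-categorical analogue of the wide equalizer of all endomorphisms of $P$. Local smallness guarantees that the full subcategory $\C_0 \subseteq \C$ spanned by $P$ is essentially small, and completeness then provides the limit $L := \lim(\C_0 \hookrightarrow \C)$, with structure leg $p \colon L \to P$. By construction the limit cone exhibits a homotopy $\phi \circ p \simeq p$ for every endomorphism $\phi$ of $P$. Non-emptiness of every $\mathrm{map}_{\C}(L,X)$ is then clear: composing $p$ with a map $P \to X$ (which exists since $P$ is weakly initial) shows that $L$ is again weakly initial. It remains to prove that each $\mathrm{map}_{\C}(L,X)$ is connected, i.e. that $L$ is $h$-initial. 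Following Freyd, given $f,g \colon L \to X$ I would form their equalizer $k \colon K \to L$ in $\C$, use weak initiality of $P$ to choose $s \colon P \to K$, and assemble the self-map $\theta := k \circ s \circ p \colon L \to L$. Since $f \circ k \simeq g \circ k$ one gets $f\theta \simeq g\theta$, while the cone relation applied to the endomorphism $\chi := p \circ k \circ s$ of $P$ gives $p\theta = \chi \circ p \simeq p$; thus if $\theta \simeq \mathrm{id}_L$ we are done.

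The main obstacle is exactly this last equivalence. Classically one concludes by cancelling $p$ on the left, using that the leg of a wide equalizer is a monomorphism --- but in an $\infty$-category the legs of such limits (and of equalizers) are in general \emph{not} monomorphisms, so the naive cancellation is unavailable, and working in $\h\C$ alone does not help since $\h\C$ supports only weak equalizers. What is true, and what I expect to exploit instead, is that $\theta$ is a \emph{homotopy idempotent}: a short computation with the relation $\chi\circ p\simeq p$ gives $\theta\circ\theta\simeq\theta$, and $p\circ\theta\simeq p$. Since a complete $\infty$-category is idempotent complete, $\theta$ splits and $p$ factors through its splitting; the remaining task --- the real content of the proposition --- is to leverage this splitting together with the coherent nullhomotopies supplied by the limit cone to conclude that $\theta$ is an equivalence, equivalently that $f \simeq g$. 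This is the step where the homotopy-coherent bookkeeping, rather than any argument in $\h\C$, becomes indispensable.
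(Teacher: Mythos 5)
There is a genuine gap, and it is exactly where you stop: the step ``$\theta \simeq \mathrm{id}_L$'' (equivalently $f \simeq g$) is the entire content of the proposition, and the tools you set up do not reach it. Two concrete problems. First, your intermediate claim that $\theta$ splits is unjustified as stated: completeness of $\C$ guarantees the splitting of \emph{coherent} idempotents, i.e.\ diagrams indexed by the idempotent-classifying $\infty$-category, whereas your computation only produces a homotopy $\theta \circ \theta \simeq \theta$ in $\h\C$. By the classical examples of Freyd and Heller, a homotopy idempotent need not lift to a coherent idempotent even in the $\infty$-category of spaces, which is complete and cocomplete; so ``homotopy idempotent $\Rightarrow$ splits'' is false in this generality. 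Second, even granting a splitting $L \to Z \to L$ of $\theta$ through which $p$ factors, no argument is offered that $\theta$ is an equivalence, and none is available from the $\pi_0$-level relations you extracted ($p\theta \simeq p$, $\theta^2 \simeq \theta$): the classical cancellation uses that the wide-equalizer leg $p$ is a monomorphism, which comes from the \emph{strict uniqueness} clause of the $1$-categorical universal property, and as you yourself observe this has no analogue here. Passing from the relations in $\h\C$ to the needed conclusion would require using the full coherent cone over $\C_0$, and that is precisely the step left open.

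It is worth noting how the paper's proof sidesteps this obstruction entirely: it never attempts to show that a particular self-map is an equivalence. Instead, with $\C'$ the (essentially small, by local smallness) full subcategory spanned by the weakly initial set, it proves that the inclusion $\C' \hookrightarrow \C$ is \emph{coinitial} via Theorem \ref{QuillenA}, by checking that each $\C'_{/c}$ is weakly contractible: any small diagram $K \to \C'_{/c}$ admits a limit cone in the complete $\infty$-category $\C_{/c}$ (Lemma \ref{slicecomplete}), and weak initiality plus Lemma \ref{conemorphism} lets one replace the cone-object by one in $\C'$, so every such diagram extends over $K^\triangleleft$ inside $\C'_{/c}$. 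Coinitiality then transports the limit of $\C' \hookrightarrow \C$ (which exists by completeness and essential smallness) to a limit of $\mathrm{id}_\C$ (Proposition \ref{final-maps}(1)), which is an initial object by Proposition \ref{idlimit}. In effect, the coherent cone over the \emph{whole} full subcategory is what replaces the monomorphism property of Freyd's leg $p$; your object $L = \lim(\C_0 \hookrightarrow \C)$ is in fact the initial object, but establishing this requires the coinitiality argument (or something equivalent to it), not idempotent-splitting. Your reduction to $h$-initiality via Proposition \ref{finitecriterion} is a reasonable alternative endgame, but the paper's route through Proposition \ref{idlimit} shows it is not needed.
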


We give a proof in Subsection \ref{proof-of-initial-criterion} after we recall a few facts about coinitial functors, i.e., the dual notion of a cofinal functor. 

\subsection{Reminder about coinitial functors} We recall the definition of a coinitial functor and review some of its main properties following and dualizing \cite[4.1]{HTT}. This notion is required for the proof of Proposition \ref{initialcriterion} and can
also be used to reformulate the different notions of initiality for objects in an $\infty$-category as defined earlier.  

\smallskip

Given simplicial sets $A$ and $B$, we denote by $\mathrm{hom}(A,B)$ the internal hom-object with respect to the cartesian product of simplicial sets. Suppose furthermore that we have structure maps $A\to T$ and $B\to T$, then we denote by $\mathrm{hom}_{/T}(A,B)\subseteq \mathrm{hom}(A,B)$ the simplicial subset of those maps which commute with the structure maps.

\begin{defn}
A map of simplicial sets $S\to T$ is \textit{coinitial} if for any left fibration $X \to T$ the induced map
\[
\mathrm{hom}_{/T}(T,X)\to \mathrm{hom}_{/T}(S,X)
\]
is a homotopy equivalence of simplicial sets.
\end{defn}

The following proposition collects some properties of coinitial maps. 

\begin{prop} \label{final-maps}
\begin{enumerate}
\item[(1)] A map $f \colon K \to K'$ is coinitial if and only if for any $\infty$-category 
and any diagram $p \colon K' \to \C$, the induced map 
$$\C_{/p} \to \C_{/q}$$
is an equivalence of $\infty$-categories, where $q = p \circ f$.
\item[(2)]  A map $f \colon K \to K'$ is coinitial if and only if for any $\infty$-category $\C$ and any limit cone $p \colon K'^{\triangleleft} \to \C$, the induced diagram $$K^\triangleleft \to K'^\triangleleft \xrightarrow{p} \C$$
is also a limit cone. 
\end{enumerate}
\end{prop}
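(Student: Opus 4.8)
The plan is to show that the conditions in (1) and (2) are each equivalent to coinitiality by proving the cycle of implications
\[
\text{coinitial} \;\Longrightarrow\; (1) \;\Longrightarrow\; (2) \;\Longrightarrow\; \text{coinitial},
\]
where I abbreviate by (1) and (2) the slice-equivalence and limit-cone conditions respectively. The conceptual observation driving the argument is that, via the straightening--unstraightening equivalence \cite{HTT}, left fibrations $X \to K'$ are classified by functors $F \colon K' \to \mathcal S$, and the section space $\mathrm{hom}_{/K'}(K', X)$ computes the limit $\lim_{K'} F$. Under this identification, coinitiality of $f \colon K \to K'$ says exactly that restriction along $f$ induces an equivalence $\lim_{K'} F \xrightarrow{\sim} \lim_K (F \circ f)$ for every $F$, i.e.\ that $f$ preserves limits of space-valued diagrams.

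For the implication coinitial $\Rightarrow$ (1), I would argue fibrewise. Both $\C_{/p} \to \C$ and $\C_{/q} \to \C$ are right fibrations and the comparison $\C_{/p} \to \C_{/q}$ is a map over $\C$; by the fibrewise criterion for equivalences of right fibrations \cite{HTT} it suffices to show that it induces a homotopy equivalence on the fibre over each object $c \in \C$. For this I would pull back the left fibration $\C_{c/} \to \C$ along $p$ to obtain a left fibration $X_c := \C_{c/} \times_\C K' \to K'$. A direct inspection identifies the fibre of $\C_{/p} \to \C$ over $c$ with the section space $\mathrm{hom}_{/K'}(K', X_c)$, and likewise the fibre of $\C_{/q} \to \C$ over $c$ with $\mathrm{hom}_{/K'}(K, X_c)$, in such a way that the map on fibres is precisely the restriction map along $f$. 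Coinitiality makes this a homotopy equivalence, whence $\C_{/p} \to \C_{/q}$ is an equivalence.

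The implication (1) $\Rightarrow$ (2) is then formal. If $p \colon K'^{\triangleleft} \to \C$ is a limit cone of $g := p|_{K'}$, then by definition it is a terminal object of $\C_{/g}$. The equivalence $\C_{/g} \xrightarrow{\sim} \C_{/(g \circ f)}$ supplied by (1) is exactly restriction of cones along $\mathrm{id} \star f$, and since an equivalence of $\infty$-categories preserves terminal objects, the restricted cone $K^{\triangleleft} \to \C$ is terminal in $\C_{/(g \circ f)}$, hence a limit cone. Finally, to close the cycle with (2) $\Rightarrow$ coinitial, I would specialize (2) to $\C = \mathcal S$: since $\mathcal S$ is complete, any $F \colon K' \to \mathcal S$ extends to a limit cone, and (2) forces the canonical map $\lim_{K'} F \to \lim_K (F \circ f)$ to be an equivalence. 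Combining this with the identification of section spaces with limits and the fact that every left fibration over $K'$ arises, up to equivalence, as the unstraightening of some such $F$, we recover precisely coinitiality.

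I expect the main obstacle to lie in the two points where the straightening machinery enters: the fibrewise criterion for equivalences of right fibrations in the first step, and, more substantially, the identification $\mathrm{hom}_{/K'}(K', X) \simeq \lim_{K'} F$ together with the essential surjectivity of unstraightening used in the last step. Both are standard consequences of the theory developed in \cite{HTT}, and the bookkeeping of the fibre identifications in the first step must be carried out carefully; but neither requires an idea beyond dualizing the corresponding statements about cofinal maps.
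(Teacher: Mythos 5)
Your proposal is correct in outline, but be aware that the paper does not actually prove this proposition at all: its ``proof'' is the single line \emph{See \cite[Proposition 4.1.1.8]{HTT}}, the statement being the dual of Lurie's characterization of cofinal maps. So what you have written is essentially a reconstruction of the argument behind that citation rather than an alternative to anything in the paper, and as such it is the standard proof. Your cycle is sound: coinitiality implies the slice condition by the fibrewise criterion for maps of right fibrations over $\C$ (\cite[Proposition 3.3.1.5]{HTT}); the slice condition implies the limit-cone condition formally, since by the paper's definition a limit cone is a terminal object of $\C_{/g}$ and equivalences preserve terminal objects; and specializing to $\C = \mathcal{S}$ recovers coinitiality via straightening. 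The two places where your own bookkeeping warning genuinely bites are these: (i) the fibre of $\C_{/p} \to \C$ over $c$ is \emph{not} literally the section space $\mathrm{hom}_{/K'}(K', \C_{c/} \times_{\C} K')$ --- the former is built from joins $\Delta^n \star K'$, the latter from products $\Delta^n \times K'$ --- so you must pass through the fat slice and the comparison equivalence of \cite[Proposition 4.2.1.5]{HTT}, checking its compatibility with restriction along $f$; and (ii) in the final step you need the identification $\lim_{K'} F \simeq \mathrm{hom}_{/K'}(K', X)$ of \cite[Corollary 3.3.3.2]{HTT} to be natural with respect to $f$, so that the restriction map of section spaces really is the canonical comparison map of limits, together with essential surjectivity of unstraightening and invariance of section spaces under fibrewise equivalences of left fibrations. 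All of these are available in \cite{HTT} without circularity (Chapter 3 precedes the cofinality theory), so your sketch closes correctly; it simply amounts to inlining the dual of Lurie's proof where the paper chose to cite it.
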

\begin{proof}
See \cite[Proposition 4.1.1.8]{HTT}.  
\end{proof}

A useful recognition theorem for coinitial maps is given by the following generalization of Quillen's Theorem A.  We first introduce some notation: given 
a map $F: K \to \D$ between simplicial sets and an object $d\in \D$, we denote by $F_{/d}$ the pullback of simplicial sets
\[
\begin{tikzcd}
F_{/d} \rar \dar & \D_{/d}\dar \\
K \rar{F} & \D
\end{tikzcd}
\]
and dually, by $F_{d/}$, the corresponding pullback along the projection 
$\D_{d/}\to \D$.

\begin{thm}[Joyal] \label{QuillenA}
Let $F\colon \C \to \D$ be a functor between $\infty$-categories. Then $F$ is coinitial if and only if $F_{/d}$ is weakly contractible for every object $d \in \D$.
\end{thm}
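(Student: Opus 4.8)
The plan is to deduce the statement from the dual, already-known form of Quillen's Theorem A for cofinal maps, namely \cite[Theorem 4.1.3.1]{HTT}, by passing to opposite $\infty$-categories. Recall that a map $X \to T$ is a left fibration if and only if $X^{\op} \to T^{\op}$ is a right fibration, and that for simplicial sets $A$ and $X$ over $T$ there is a natural isomorphism $\mathrm{hom}_{/T^{\op}}(A^{\op}, X^{\op}) \cong \mathrm{hom}_{/T}(A, X)^{\op}$. Since the formation of opposite simplicial sets preserves homotopy equivalences (the geometric realization is unchanged), unwinding the definition of coinitiality shows that $F \colon \C \to \D$ is coinitial if and only if $F^{\op} \colon \C^{\op} \to \D^{\op}$ is cofinal in the sense of \cite[Definition 4.1.1.1]{HTT}.

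Next I would record the effect of this duality on the comma construction. Inspecting the pullback defining $F_{/d}$, together with the canonical isomorphism $(\D_{/d})^{\op} \cong (\D^{\op})_{d/}$, yields a natural isomorphism $(F_{/d})^{\op} \cong (F^{\op})_{d/}$ for every object $d \in \D$. Because a simplicial set is weakly contractible precisely when its opposite is, the weak contractibility of $F_{/d}$ is equivalent to that of $(F^{\op})_{d/}$.

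Combining these two observations reduces the claim to \cite[Theorem 4.1.3.1]{HTT} applied to $F^{\op}$: that result asserts that $F^{\op}$ is cofinal if and only if $(F^{\op})_{d/} = \C^{\op} \times_{\D^{\op}} (\D^{\op})_{d/}$ is weakly contractible for every $d$. Translating both sides back along the dualities above gives exactly the assertion that $F$ is coinitial if and only if $F_{/d}$ is weakly contractible for all $d \in \D$.

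The only genuine content here is \cite[Theorem 4.1.3.1]{HTT} itself; the remainder is bookkeeping with opposites. Were one to attempt a self-contained proof, I expect the main obstacle to be the mismatch of variance between the defining condition and the criterion: coinitiality is phrased through left fibrations over $\D$ (covariant families of spaces), whereas the relevant comma $\infty$-category $F_{/d} = \C \times_{\D} \D_{/d}$ is cut out by the \emph{right} fibration $\D_{/d} \to \D$. Bridging this gap is precisely what makes the theorem nontrivial, and a direct argument would essentially reprove it through the straightening/unstraightening equivalence, identifying sections of a left fibration with the limit of its classifying diagram and analysing the comparison map $\lim_{\D} G \to \lim_{\C} F^{\ast}G$ for all $G \colon \D \to \mathcal S$.
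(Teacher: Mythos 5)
Your proof is correct and matches the paper's approach: the paper's entire proof is the citation ``See \cite[Theorem 4.1.3.1]{HTT}'', which implicitly relies on exactly the dualization you spell out (left fibrations over $\D$ corresponding to right fibrations over $\D^{\op}$, the isomorphism $(F_{/d})^{\op} \cong (F^{\op})_{d/}$, and invariance of weak contractibility under taking opposites). Your write-up simply makes explicit the bookkeeping the paper leaves to the reader, and all of those identifications are accurate.
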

\begin{proof}
See \cite[Theorem 4.1.3.1]{HTT}.
\end{proof}

As an immediate consequence of this characterization, we note that an object $x \in \C$ is initial if and only if the map $\Delta^0 \xrightarrow{x} \C$ is coinitial. Similarly, the notions of an $h$-initial object and a weakly initial set can be reformulated in terms of the following weaker notions of coinitiality. We say that a map
$F \colon K \to \D$ between simplicial sets is \emph{h-coinitial} (resp. \emph{weakly coinitial}) 
if the simplicial set $F_{/d}$ is non-empty and connected (resp. non-empty) for every $d \in \D$. Then we obtain the following obvious reformulations of our previous definitions. 

\begin{prop}
Let $\C$ be an $\infty$-category. 
\begin{enumerate}
\item[(1)] An object $x \in \C$ is $h$-initial if and only if $\Delta^0 \xrightarrow{x} \C$ is h-coinitial. 
\item[(2)] A set of objects $S \subset \C$ is weakly initial if and only if the inclusion 
(of objects) $S \hookrightarrow \C$ is weakly coinitial. 
\end{enumerate}
\end{prop}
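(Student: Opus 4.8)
The plan is to unwind both definitions by identifying the relevant pullbacks $F_{/d}$ with explicit models of mapping spaces, and then to observe that the two weakened coinitiality conditions translate termwise into the stated initiality conditions. The only homotopical input needed is that \emph{non-emptiness} and \emph{connectedness} are invariants of the weak homotopy type of a simplicial set, so that it is immaterial which model of the mapping space one uses.

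For part (1), I would take $F$ to be the map $x \colon \Delta^0 \to \C$. By the defining pullback square for $F_{/d}$, the simplicial set $F_{/d}$ is precisely the fiber of the projection $\C_{/d} \to \C$ over the object $x$. Since this projection is a right fibration, its fiber is a Kan complex, and dualizing the computation recalled for $\C_{x/} \to \C$ in Subsection 2.1, this fiber is a model for the mapping space $\mathrm{map}_{\C}(x, d)$. Consequently $F_{/d}$ is non-empty and connected for every $d \in \C$ if and only if $\mathrm{map}_{\C}(x, d)$ is non-empty and connected for every $d$, which is exactly the characterization of $h$-initiality recorded before Proposition \ref{finitecriterion}. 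This yields the equivalence in (1).

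For part (2), I would take $F$ to be the inclusion of objects $S \hookrightarrow \C$, regarding $S$ as the discrete simplicial set $\coprod_{x_s \in S} \Delta^0$. Since pullbacks of simplicial sets commute with coproducts, the pullback $F_{/d}$ decomposes as $\coprod_{x_s \in S} \mathrm{map}_{\C}(x_s, d)$, using the identification from part (1) for each object $x_s$. Hence $F_{/d}$ is non-empty if and only if at least one summand $\mathrm{map}_{\C}(x_s, d)$ is non-empty, that is, if and only if there exists $x_s \in S$ with $\mathrm{map}_{\C}(x_s, d)$ non-empty. Quantifying over all $d \in \C$, this says precisely that $F$ is weakly coinitial if and only if $S$ is weakly initial.

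Since each step is a direct translation, I do not anticipate a genuine obstacle; the only point requiring care is the precise identification of the fiber of $\C_{/d} \to \C$ over $x$ with a model of $\mathrm{map}_{\C}(x,d)$, together with the compatibility of this identification with the coproduct decomposition used in (2). Everything else is formal, which is why these equivalences are stated as obvious reformulations.
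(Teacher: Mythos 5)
Your proof is correct and is exactly the routine identification the paper leaves implicit (the proposition is stated there without proof as an ``obvious reformulation''): the fiber of the right fibration $\C_{/d} \to \C$ over $x$ is a model for $\mathrm{map}_{\C}(x,d)$, and for a set of objects the pullback decomposes as the coproduct $\coprod_{x_s \in S} \mathrm{map}_{\C}(x_s,d)$, so non-emptiness (and, in case (1), connectedness) of $F_{/d}$ translates termwise into the stated conditions. Your attention to the model-independence of non-emptiness and connectedness is the right point of care, and nothing further is needed.
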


\subsection{Proof of Proposition \ref{initialcriterion}} \label{proof-of-initial-criterion} We first state a few elementary lemmas.

\begin{lem}\label{slicecomplete}
Let $\C$ be an $\infty$-category and let $c \in \C$ be an object. If $\C$ is complete, then so is $\C_{/c}$.
\end{lem}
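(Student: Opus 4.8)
The plan is to reduce the completeness of $\C_{/c}$ to that of $\C$ by means of the standard identification of an iterated slice. Concretely, I want to show that every diagram $q \colon K \to \C_{/c}$, with $K$ a small simplicial set, admits a limit, i.e.\ that the slice $(\C_{/c})_{/q}$ has a terminal object.

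First I would unwind the universal property of the slice. Writing $\C_{/c} = \C_{/p}$ for $p \colon \Delta^0 \to \C$ the map selecting $c$, a diagram $q \colon K \to \C_{/c}$ corresponds by adjunction to a map $\bar q \colon K \star \Delta^0 \to \C$ whose restriction to the cone point $\Delta^0$ is $c$. The key step is then the iterated slice formula: for any simplicial set $Y$, a map $Y \to (\C_{/c})_{/q}$ is the same as a map $Y \star K \to \C_{/c}$ restricting to $q$, hence the same as a map $(Y \star K) \star \Delta^0 \to \C$ restricting to $\bar q$; by associativity of the join, $(Y \star K) \star \Delta^0 \cong Y \star (K \star \Delta^0)$, so this is precisely a map $Y \to \C_{/\bar q}$. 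This yields a natural isomorphism of simplicial sets $(\C_{/c})_{/q} \cong \C_{/\bar q}$ (compare \cite[4.2]{HTT}).

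It then remains to observe that $\C_{/\bar q}$ has a terminal object. Since $K$ is small, so is $K \star \Delta^0$, and the hypothesis that $\C$ is complete provides a limit cone for $\bar q \colon K \star \Delta^0 \to \C$, that is, a terminal object of $\C_{/\bar q}$. Transporting along the isomorphism above gives a terminal object of $(\C_{/c})_{/q}$, i.e.\ a limit of $q$ in $\C_{/c}$. As $q$ was an arbitrary diagram indexed by a small simplicial set, $\C_{/c}$ is complete.

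The only delicate point is the iterated slice identity, which rests on the associativity of the join and a careful bookkeeping of the restriction constraints; everything else is a direct application of the definition of a limit as a terminal object of the appropriate slice, exactly as set up earlier in the excerpt. I expect no genuine obstacle beyond getting these combinatorial identifications right.
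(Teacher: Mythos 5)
Your proof is correct and takes essentially the same route as the paper: the paper's (very terse) proof likewise passes to the adjoint diagram $f' \colon K \star \Delta^0 \to \C$, takes a limit cone $(K \star \Delta^0)^\triangleleft \to \C$, and reads its adjoint back as a limit cone for $f$ in $\C_{/c}$. Your explicit iterated-slice identification $(\C_{/c})_{/q} \cong \C_{/\bar q}$ via associativity of the join is precisely the bookkeeping the paper leaves implicit.
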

\begin{proof}
Let $f \colon K \to \C_{/c}$ be a diagram and $f' \colon K \star \Delta^0 \to \C$ its adjoint. The adjoint of a limit cone $(K \star \Delta^0)^\triangleleft 
\to \C$ for $f'$ defines a limit cone for $f$. 
\end{proof}

\begin{lem}\label{conemorphism}
Let $\C$ be an $\infty$-category, $x \in \C$ an object, and let $\lambda:\Delta^0 \star K \to \C$ be a cone. Then a morphism $u \colon x \to \lambda(\ast)$ determines a cone $\lambda': \Delta^0 \star K \to \C$ with cone-object $\lambda'(\ast)= x$ and 
a morphism of cones $\phi \colon \lambda' \to \lambda$ with $\phi(\ast) =  u$.
\end{lem}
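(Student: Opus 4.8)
The plan is to obtain $\phi$ as a lift of the morphism $u$ along the projection of a slice $\infty$-category, exploiting the fact that such a projection is a right fibration. Set $p := \lambda|_K \colon K \to \C$, the underlying diagram of the cone $\lambda$. Both $\lambda$ and the cone $\lambda'$ to be constructed are then objects of the slice $\C_{/p}$, and a morphism of cones $\lambda' \to \lambda$ over the same diagram $p$ is by definition a morphism in $\C_{/p}$. The canonical projection $\pi \colon \C_{/p} \to \C$, induced by $\Delta^0 \hookrightarrow \Delta^0 \star K$, evaluates a cone at its cone-object and sends a morphism of cones to its value $\phi(\ast)$ at the cone-object. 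Thus it suffices to produce a morphism $\phi \colon \lambda' \to \lambda$ in $\C_{/p}$ lying over $u$, with $\lambda'$ lying over $x$.

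First I would recall that $\pi \colon \C_{/p} \to \C$ is a right fibration (this is used already in the excerpt for $p = \mathrm{id}$; see \cite[\S 2.1.2]{HTT}). Hence $\pi$ has the right lifting property against every right anodyne inclusion, and in particular against the inclusion of the terminal vertex $\Lambda^1_1 = \Delta^{\{1\}} \hookrightarrow \Delta^1$. Regarding $u \colon x \to \lambda(\ast)$ as a $1$-simplex with $u(1) = \lambda(\ast) = \pi(\lambda)$, the key step is to solve the lifting problem
\[
\begin{tikzcd}
\Delta^{\{1\}} \rar{\lambda} \dar & \C_{/p} \dar{\pi} \\
\Delta^1 \urar[dashed]{\phi} \rar[swap]{u} & \C,
\end{tikzcd}
\]
in which the top horizontal map selects the object $\lambda$. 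Since $\Delta^{\{1\}} \hookrightarrow \Delta^1$ is right anodyne and $\pi$ is a right fibration, a lift $\phi$ exists. Setting $\lambda' := \phi(0) \in \C_{/p}$ yields a cone over $p$ with cone-object $\pi(\lambda') = u(0) = x$, and $\phi \colon \lambda' \to \lambda$ is then a morphism of cones with $\phi(\ast) = \pi(\phi) = u$, exactly as required.

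The argument is essentially bookkeeping, and the only points that need care concern orientation conventions: one must arrange that the \emph{terminal} vertex of $\Delta^1$ (the target of $u$) is the one pinned over $\lambda$, which is precisely what is compatible with $\C_{/p} \to \C$ being a \emph{right} (and not a left) fibration, and one must confirm that $\pi$ genuinely records cone-objects and that morphisms in $\C_{/p}$ are morphisms of cones in the sense of the statement. I do not anticipate any real obstacle beyond this. If one preferred to avoid slice categories, an alternative would be to build $\phi$ directly as an extension of $\lambda$ and $u$ to a map $\Delta^1 \star K \to \C$ along the inclusion $(\Delta^{\{1\}} \star K) \cup_{\Delta^{\{1\}}} \Delta^{\{0,1\}} \hookrightarrow \Delta^{\{0,1\}} \star K$; here the main work would be to verify that this inclusion is inner anodyne (so that the extension exists because $\C$ is an $\infty$-category), which is the most delicate step of that route.
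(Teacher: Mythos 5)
Your proof is correct and is essentially the paper's argument in adjoint form: the lifting problem you solve against the right fibration $\C_{/p}\to\C$ transposes, under the join--slice adjunction, into exactly the extension problem along the inner anodyne inclusion $\Delta^1\cup_{\Delta^0}(\Delta^0\star K)\subseteq \Delta^1\star K$ that the paper solves directly by citing \cite[Lemma 2.1.2.3]{HTT}. In particular, the ``alternative'' you sketch at the end is verbatim the paper's proof, and the step you flag as most delicate there is settled by that single citation --- the same fact that underlies the right-fibration property of the slice projection you invoke.
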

\begin{proof}
The morphism $x \to \lambda(\ast)$ determines a map $\Delta^1 \cup_{\Delta^0}\Delta^0 \star K \to \C$. Then the result follows by choosing an extension along the 
inclusion $\Delta^1\cup_{\Delta^0}\Delta^0 \star K \subseteq \Delta^1 \star K$, 
which is inner anodyne by \cite[Lemma 2.1.2.3]{HTT}.
\end{proof}

\noindent \textbf{Proof of Proposition \ref{initialcriterion}.} It is clear that $\C$ admits a small weakly initial set if it admits an initial object. 

Conversely, suppose that $S \subset \C$ is a small weakly initial set. By Lemma \ref{idlimit}, we need to show that the identity functor $\mathrm{id}: \C \to \C$ admits a limit. Note that $\C$ is not assumed to be small in general. Let $\C'$ denote the full subcategory of $\C$ that is spanned by $S$. We 
claim that the inclusion $U \colon \C' \hookrightarrow \C$ is a coinitial functor.  
By Theorem \ref{QuillenA}, it suffices to show that the $\infty$-category $\C'_{/c} : = U_{/c}$ is weakly contractible for each $c \in \C$. 

Let $K$ be a small simplicial set and let $\lambda: K \to \C'_{/c}$ be a map. We consider the composition
\[
\mu: K \xrightarrow{\lambda} \C'_{/c} \to \C_{/c}.
\]
Since $\C$ is complete, so is $\C_{/c}$ by Lemma \ref{slicecomplete}. Hence there is an extension to a limit cone as follows
\[
\begin{tikzcd}
K \rar \dar & \C'_{/c} \rar & \C_{/c}.\\
K^{\triangleleft} \arrow{urr}[swap]{\overline \mu} & &
\end{tikzcd}
\]
The cone-object $\overline{\mu}(\ast)$ corresponds to a morphism $(l \to c)$ in $\C$. Since $\C' \subset \C$ is weakly coinitial, there is an object $c'\in \C'$ and a morphism $\gamma: c' \to l$. This determines a morphism $(\overline{\mu}(\ast) \circ\gamma\to \overline{\mu}(\ast)) $ in $\C_{/c}$. By Lemma \ref{conemorphism}, this last morphism extends to a morphism of cones
\[
\Gamma: \Delta^1 \star K \to \C_{/c}
\]
such that $\Gamma|_{\Delta^{\{1\}}\star K}= \overline{\mu}$ and $\Gamma|_{\Delta^{\{0\}}}= \overline{\mu}(\ast) \circ \gamma$. Let $\Gamma_0:= \Gamma|_{\Delta^{\{0\}}\star K}$ and consider the composition
\[
\Delta^0 \star K \xrightarrow{\Gamma_0} \C_{/c}\to \C.
\]
We observe that this composition sends every vertex of $\Delta^0 \star K$ to a vertex belonging to $\C'$. Since $\C'$ is a full subcategory, the functor $\Gamma_0$ factors through the inclusion $\C'_{/c} \subset \C_{/c}$,
\[
\begin{tikzcd}
  & \C'_{/c}\dar \\
\Delta^0 \star K \rar{\Gamma_0} \urar[dashed]{\Gamma'_0} & \C_{/c}.
\end{tikzcd}
\]
By construction, $\Gamma'_0$ extends $\lambda: K \to \C'_{/c}$. In conclusion, any map $K \to \C'_{/c}$ admits an extension as follows
\[
\begin{tikzcd}
K \rar \dar & \C'_{/c}.\\
K^\triangleleft \urar[dashed] &
\end{tikzcd}
\]
Since $K^\triangleleft$ is weakly contractible, it follows by standard arguments that $\C'_{/c}$ is weakly contractible and therefore $\C' \hookrightarrow \C$ is coinitial, as claimed. Moreover the diagram $U\colon \C' \to \C$ admits a limit because $\C$ admits small limits and $\C'$ is essentially small by our assumptions on $\C$. Using Proposition \ref{final-maps}(1), this implies that the identity $\mathrm{id}\colon \C \to \C$ also admits a limit. Then $\C$ has 
an initial object by Proposition \ref{idlimit}. \qed

\section{General adjoint functor theorems}

\subsection{Recollections about adjoint functors} We recall the definition of an adjunction between $\infty$-categories and review some of its main properties
following the treatment in \cite[5.2]{HTT}. Given a map $q \colon \mathscr{M} \to \Delta^1$ we write $\mathscr{M}_0$ (resp. $\mathscr{M}_1$) for the fiber at the 
$0$-simplex $\Delta^{\{0\}}\to \Delta^1$ (resp. $\Delta^{\{1\}} \to \Delta^1$). We say that $q$ is a \emph{bicartesian fibration} if it is both a cartesian and a cocartesian fibration.  

\begin{defn}
Let $\C$ and $\D$ be $\infty$-categories. An \emph{adjunction} between $\C$ and $\D$ consists of a bicartesian fibration $q \colon \mathscr{M} \to \Delta^1$ together with equivalences $\C \simeq \mathscr{M}_0$ and $\D\simeq \mathscr{M}_1$. 
\end{defn}

\noindent An adjunction $\mathscr{M} \to \Delta^1$  determines essentially uniquely functors as follows
$$F \colon \C \simeq \mathscr{M}_0 \to \mathscr{M}_1 \simeq \D$$
$$G \colon \D \simeq \mathscr{M}_1 \to \mathscr{M}_0 \simeq \C.$$
Then we say that $F$ is left adjoint to $G$ (resp. $G$ is right adjoint to $F$). Conversely, given a pair of functors $F \colon \C \rightleftarrows \D \colon G$, $F$ is left adjoint to $G$ if and only if there is a natural transformation 
$$u \colon \mathrm{id}_{\C} \to G \circ F$$
such that the composition 
$$\mathrm{map}_{\D}(F(c), d) \stackrel{G}{\to} \mathrm{map}_{\C}(G(F(c)), G(d)) \stackrel{u^\ast}{\to} \mathrm{map}_{\C}(c, G(d))$$
is a weak equivalence for all $c \in \C$ and $d \in \D$. The natural transformation $u$ is the \emph{unit transformation} of the adjunction 
and it can be constructed using the bicartesian properties of the adjunction (see \cite[Proposition 5.2.2.8]{HTT}). If it exists, an adjoint is uniquely determined \cite[Proposition 5.2.6.2]{HTT}. As in ordinary category theory, left adjoints preserve colimits and right adjoints preserve limits \cite[Proposition 5.2.3.5]{HTT}.

\medskip

We will make use 
of the following useful criterion for recognizing adjoint functors which generalizes the classical description in terms of universal arrows.  

\begin{prop}\label{initialadjunction}
Let $q: \mathscr{M} \to \Delta^1$ be a cartesian fibration associated to a 
functor $G \colon \D \to \C$ where $\C= \mathscr{M}_0$ and $\D= \mathscr{M}_1$. Then the following are equivalent:
\begin{enumerate}
\item The functor $G$ has a left adjoint. 
\item The $\infty$-category $G_{c/}$ has an initial object for each $c \in \C$. 
\end{enumerate}
\end{prop}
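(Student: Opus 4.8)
The plan is to show that condition $(2)$ is equivalent to $q$ being a \emph{bicartesian} fibration, and then to read off both implications. Since $q$ is already a cartesian fibration and lifts of the degenerate edges of $\Delta^1$ are automatic, $q$ is cocartesian --- and hence bicartesian --- exactly when every object $c \in \mathscr{M}_0 = \C$ is the source of a $q$-cocartesian edge covering $0 \to 1$. By the paper's definition, a bicartesian fibration over $\Delta^1$ is precisely an adjunction, with the functor $\mathscr{M}_1 = \D \to \C = \mathscr{M}_0$ classifying it being the right adjoint; thus exhibiting $q$ as bicartesian produces a left adjoint to $G$. It therefore suffices to match $q$-cocartesian lifts of $0 \to 1$ with source $c$ against initial objects of $G_{c/}$, which will give $(2)\Rightarrow(1)$; for the reverse direction I will instead produce the initial objects directly from a left adjoint.

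The link between the two is a mapping-space computation. Because $q$ is the cartesian fibration associated to $G$, composing with the cartesian edge $G(d) \to d$ yields, for $c \in \C$ and $d \in \D$, a natural equivalence $\mathrm{map}_{\mathscr{M}}(c,d) \simeq \mathrm{map}_{\C}(c, G(d))$. Using this I would identify $G_{c/} = \D \times_{\C} \C_{c/}$ with the left fibration over $\D$ classified by $\mathrm{map}_{\C}(c, G(-))$, whose objects are pairs $(d, \eta\colon c \to G(d))$. The standard formula for morphism spaces in a left fibration then gives, for objects $(d_0, \eta)$ and $(d, \phi)$,
\[
\mathrm{map}_{G_{c/}}\bigl((d_0,\eta),(d,\phi)\bigr) \;\simeq\; \mathrm{fib}_{\phi}\Bigl(\mathrm{map}_{\D}(d_0, d) \xrightarrow{\,f \mapsto G(f)\circ\eta\,} \mathrm{map}_{\C}(c, G(d))\Bigr).
\]
Thus $(d_0, \eta)$ is initial in $G_{c/}$ if and only if every such fibre is contractible, that is, if and only if $f \mapsto G(f)\circ\eta$ is an equivalence for all $d$. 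Transporting through $\mathrm{map}_{\mathscr{M}}(c,d) \simeq \mathrm{map}_{\C}(c,G(d))$, this last condition is exactly the assertion that the edge $e\colon c \to d_0$ corresponding to $\eta$ is $q$-cocartesian. Hence initial objects of $G_{c/}$ correspond to $q$-cocartesian lifts of $0\to 1$ with source $c$.

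Granting this dictionary, $(2)\Rightarrow(1)$ is immediate: an initial object of each $G_{c/}$ supplies a $q$-cocartesian edge out of each $c \in \mathscr{M}_0$, so $q$ is bicartesian and thus an adjunction with $G$ as right adjoint. For $(1)\Rightarrow(2)$ I would argue directly: a left adjoint $F$ with unit $u\colon \mathrm{id}_{\C} \to GF$ makes $(F(c), u_c)$ an object of $G_{c/}$, and the adjunction identity recalled above says precisely that $f \mapsto G(f)\circ u_c$ is an equivalence $\mathrm{map}_{\D}(F(c),d) \simeq \mathrm{map}_{\C}(c,G(d))$; by the displayed formula this is exactly the statement that $(F(c), u_c)$ is initial in $G_{c/}$.

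The step I expect to be the main obstacle is the middle one: confirming that $G_{c/} \to \D$ is the left fibration classified by $\mathrm{map}_{\C}(c,G(-))$, that its morphism spaces are the asserted homotopy fibres, and that this matches the $q$-cocartesian condition under the cartesian identification $\mathrm{map}_{\mathscr{M}}(c,d)\simeq\mathrm{map}_{\C}(c,G(d))$. Once this dictionary between initiality and cocartesian-ness is set up, both implications are formal.
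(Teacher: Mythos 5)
Your argument is correct, but it is worth knowing that the paper does not prove this proposition directly at all: its ``proof'' is a citation, observing that the statement is a reformulation of \cite[Lemma 5.2.4.1]{HTT} translated into the slice model $G_{c/}$ by means of \cite[Proposition 4.4.4.5]{HTT} and \cite[Propositions 4.2.1.5 and 4.2.1.6]{HTT}, with \cite[Proposition 6.1.11]{Cis2} given as an alternative. What you have written essentially inlines the content of those citations. Your first step --- that $G$ admits a left adjoint if and only if $q$ is cocartesian, which over $\Delta^1$ amounts to a $q$-cocartesian lift of $0 \to 1$ at each object of $\mathscr{M}_0$ --- is the substance of Lurie's Lemma 5.2.4.1, and is correct as you state it (degenerate edges do lift automatically). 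Your dictionary between such cocartesian edges and initial objects of $G_{c/}$, via the fibre formula for mapping spaces in the left fibration $G_{c/} \to \D$ and the cartesian-edge equivalence $\mathrm{map}_{\mathscr{M}}(c,d) \simeq \mathrm{map}_{\C}(c,G(d))$, is exactly the translation that the cited Propositions 4.4.4.5 and 4.2.1.5--4.2.1.6 accomplish, there phrased as a comparison of the two comma models $\mathscr{M}_{c/} \times_{\mathscr{M}} \D$ and $\C_{c/} \times_{\C} \D$. The fibre formula itself is the same computation the paper carries out in the proof of Lemma \ref{slicesmall} (the homotopy pullback square of mapping spaces), and your reduction ``initial $\Leftrightarrow$ all fibres over all points contractible $\Leftrightarrow$ $f \mapsto G(f)\circ\eta$ is an equivalence'' is sound, including surjectivity on components. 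The obstacle you flag is real but standard: one must verify that, under the cartesian-edge identification, composition with $e$ does correspond up to homotopy to $f \mapsto G(f)\circ\eta$; this coherence is precisely what the cited results (or \cite[Proposition 5.2.2.8]{HTT}, which also underlies the unit criterion you invoke for $(1)\Rightarrow(2)$) package. In short: your proof is a correct, self-contained reconstruction of the argument the paper outsources; the paper's route buys brevity, yours buys an explicit identification of initial objects of $G_{c/}$ with $q$-cocartesian lifts, which is conceptually the heart of the matter.
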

\begin{proof}
This is a reformulation of \cite[Lemma 5.2.4.1]{HTT} using \cite[Proposition 4.4.4.5]{HTT} and \cite[Propositions 4.2.1.5 and 4.2.1.6]{HTT}. Another proof of this characterization can be found in \cite[Proposition 6.1.11]{Cis2}.
\end{proof}

\subsection{The main results} Freyd's classical \emph{General Adjoint Functor Theorem} states that a continuous functor $G \colon \D \to \C$ from a locally small and complete category is a right adjoint if and only if $G$ satisfies the solution set condition (see, for example, \cite[V.6, Theorem 2]{ML}, or \cite[Ch. 3, Exercise J]{Fre} for a little less general formulation). In general, the solution set condition 
is a weakening of the condition that the slice category $G_{c/}$ admits an initial object for each $c \in \C$. We consider this same condition for functors between 
$\infty$-categories.   

\begin{defn} \label{solution set}
Let $G \colon \D \to \C$ be a functor between $\infty$-categories. We say that $G$ satisfies the \textit{solution set condition} if the $\infty$-category $G_{c/}$ admits a small weakly initial set for any $c \in \C$.
\end{defn}  

As the following proposition shows, this solution set condition is again essentially $1$-categorical.

\begin{prop}\label{homotopyinitialset}
Let $G \colon \D \to \C$ be a functor between $\infty$-categories. Then $G$ satisfies the solution set condition if and only if $\h G \colon \h\D \to \h\C$ 
does.
\end{prop}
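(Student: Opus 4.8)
The plan is to reduce the statement to a comparison of two ordinary categories for each fixed object $c$, namely the homotopy category $\h(G_{c/})$ of the $\infty$-categorical slice and the $1$-categorical slice $(\h G)_{c/}$, and to exploit the fact that being a weakly initial set is insensitive to all higher homotopical information.

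First I would record the elementary observation that, for any $\infty$-category $\mathscr{E}$, a set of objects $S \subseteq \mathscr{E}$ is weakly initial if and only if it is weakly initial when regarded as a set of objects of the homotopy category $\h\mathscr{E}$. This is immediate from the definition together with the identity $\mathrm{Hom}_{\h\mathscr{E}}(x,y) = \pi_0\,\mathrm{map}_{\mathscr{E}}(x,y)$: a mapping space is non-empty precisely when the corresponding $\mathrm{Hom}$-set of $\h\mathscr{E}$ is. Consequently, for a fixed $c \in \C$, the slice $G_{c/}$ admits a small weakly initial set if and only if the ordinary category $\h(G_{c/})$ does, while $(\h G)_{c/}$ likewise only sees non-emptiness of its $\mathrm{Hom}$-sets. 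Thus it suffices, for each $c$, to compare $\h(G_{c/})$ with $(\h G)_{c/}$ at the level of objects and of the relation ``there exists a morphism $x \to y$''.

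The key technical step -- and the part I expect to be the main obstacle -- is the following comparison lemma. There is an evident functor $\pi \colon \h(G_{c/}) \to (\h G)_{c/}$ sending an object $(d, f \colon c \to G(d))$ to $(d, [f])$, where $[f] \in \mathrm{Hom}_{\h\C}(c, G(d))$ is the homotopy class of $f$. I claim that (i) $\pi$ is surjective on objects, since every homotopy class of maps $c \to G(d)$ is represented by some $1$-simplex, and (ii) for objects $(d,f)$ and $(d',f')$ of $G_{c/}$ one has $\mathrm{map}_{G_{c/}}\big((d,f),(d',f')\big) \neq \varnothing$ if and only if there is a morphism $(d,[f]) \to (d',[f'])$ in $(\h G)_{c/}$. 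To prove (ii) I would unwind the definition of the slice as the pullback of $\C_{c/} \to \C$ along $G$: a morphism $(d,f) \to (d',f')$ in $G_{c/}$ consists of a morphism $g \colon d \to d'$ in $\D$ together with a $2$-simplex of $\C$ exhibiting a homotopy $G(g)\circ f \simeq f'$. Passing to $\pi_0$, such data exist for some $g$ exactly when there is a class $\bar g \in \mathrm{Hom}_{\h\D}(d,d')$ with $\h G(\bar g)\circ [f] = [f']$, which is precisely the condition defining a morphism $(d,[f]) \to (d',[f'])$ in the $1$-categorical slice. Equivalently, one may identify $\mathrm{map}_{G_{c/}}((d,f),(d',f'))$ with the homotopy fibre over $f'$ of the composite $\mathrm{map}_{\D}(d,d') \xrightarrow{G} \mathrm{map}_{\C}(G(d),G(d')) \xrightarrow{(-)\circ f} \mathrm{map}_{\C}(c,G(d'))$ and read off (ii) on $\pi_0$. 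Care is needed to treat the slice construction correctly and to verify that no coherence is lost in passing to non-emptiness.

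Granting the lemma, both implications follow formally. For the forward direction, if $S \subseteq G_{c/}$ is a small weakly initial set, then $\pi(S)$ is small, and (ii) together with the surjectivity (i) shows it is weakly initial in $(\h G)_{c/}$: given any object, choose a representative in $G_{c/}$, apply weak initiality of $S$, and transport the resulting morphism across (ii). For the converse, given a small weakly initial set $T \subseteq (\h G)_{c/}$, I would choose for each of its objects a representative $1$-simplex, obtaining a small set $S \subseteq G_{c/}$; the same use of (ii) and the object identification shows $S$ is weakly initial. Since this equivalence holds for every $c \in \C$, the two solution set conditions coincide.
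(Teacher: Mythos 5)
Your proposal is correct and takes essentially the same route as the paper: both arguments lift objects and morphisms of $(\h G)_{c/}$ along the canonical functor $G_{c/} \to (\h G)_{c/}$, and your key step (ii) is precisely the paper's use of the $2$-simplex filling property of $\C \to \h\C$ (an equation $\h G([\varphi])\circ [f] = [f']$ in $\h\C$ is witnessed by an actual $2$-simplex of $\C$, hence by a morphism in $G_{c/}$). Your alternative justification of (ii) via the fibre sequence of the left fibration $G_{c/} \to \D$ is also sound, and mirrors the mapping-space identification the paper itself employs in Lemma \ref{slicesmall}.
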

\begin{proof}
We will use the following property of the homotopy category which follows easily 
from its construction: for any $n=0,1,2$ and any lifting diagram of the form
\[
\begin{tikzcd}
\partial \Delta^n \rar \dar & \C \dar\\
\Delta^n \rar \urar[dashed] & \h\C
\end{tikzcd}
\]
there is a lift $\Delta^n \to \C$ which makes the diagram commute (cf. \cite[Remark 2.3.4.14]{HTT}). 

For $c\in \C$, the canonical functor $\C_{c/}\to (\h \C)_{c/}$ induces a functor $G_{c/} \to \h G_{c/}$. 
Suppose that $\h G$ satisfies the solution set condition, i.e. $\h G_{c/}$ admits a small weakly initial set $S_c$ for each $c \in \C$. Then given an object $(u \colon c \to G(d))\in G_{c/}$, there exists an object $(c \xrightarrow{[f_s]} G(d_s)) \in S_c$ and a morphism $([\varphi]\colon d_s \to d)\in \h\D$ such that the diagram
\[
\begin{tikzcd}
{} & c \drar{[u]} \dlar[swap]{[f_s]} & \\ 
G(d_s) \arrow{rr}[swap]{\h G[\varphi]} & & G(d)
\end{tikzcd}
\]
commutes in $\h\C$. Now choosing representatives $f_s$ and $\varphi$ for the classes $[f_s]$ and $[\varphi]$, we obtain a 2-boundary in $\C$ of the form
\[
\begin{tikzcd}
{} & c \drar{u} \dlar[swap]{f_s} & \\ 
G(d_s) \arrow{rr}[swap]{G(\varphi)} & & G(d).
\end{tikzcd}
\]
Using the property of the homotopy category mentioned above, there exists a 2-simplex in $\C$ with the given boundary. In particular, this 2-simplex determines a morphism $(f_s \to u) \in G_{c/}$. Thus, a set of representatives $f_s$, one for each $[f_s] \in S_c$, determines a small weakly initial set in $G_{c/}$. 

Conversely, it is easy to see that the image of any small weakly initial set in $G_{c/}$ under the functor $G_{c/} \to \h G_{c/}$ determines a small weakly initial set in $\h G_{c/}$.
\end{proof}

We also consider the following stronger condition which employs the notion of an $h$-initial object. This condition has no interesting analogue for ordinary categories. 

\begin{defn} \label{h-initial}
Let $G \colon \D \to \C$ be a functor between $\infty$-categories. We say that $G$ satisfies the \textit{h-initial object condition} if the $\infty$-category $G_{c/}$ admits an $h$-initial object for every $c \in \C$.
\end{defn}  

We may regard this condition as a stronger version of the solution set condition given that it asserts the existence of solution sets for both objects and 1-morphisms. 
 
\medskip 

Note that Freyd's General Adjoint Functor Theorem does not require any smallness assumptions on the target category. For the generalization of the theorem to $\infty$-categories, we need a new notion of smallness for $\infty$-categories. 
First recall that an $\infty$-category $\C$ is locally small if and only if for every  pair of objects $x, y \in \C$, the mapping space $\mathrm{map}_{\C}(x, y)$ is essentially small (see \cite[Proposition 5.4.1.7]{HTT}). Based on this characterization, we introduce the following more general notion. 

\begin{defn} 
Let $\C$ be an $\infty$-category. We say that $\C$ is $2$-\emph{locally small} if for every pair of objects $x, y \in \C$, the mapping space $\mathrm{map}_{\C}(x, y)$ is locally small.  
\end{defn} 
 
Note that every ordinary category (not necessarily locally small) is always $2$-locally small and every locally small $\infty$-category is also $2$-locally small. 

\medskip

We can now state our main adjoint functor theorems. The first one is a generalization of Freyd's General Adjoint Functor Theorem.  

\begin{thm}[GAFT]\label{GAFT}
Let $G \colon \D \to \C$ be a functor between $\infty$-categories. Suppose that $\D$ is locally small and complete and $\C$ is $2$-locally small. Then $G$ admits a left adjoint if and only if it is continuous and satisfies the solution set condition.
\end{thm}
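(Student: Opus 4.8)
The plan is to prove both directions using the characterization of adjoint functors via initial objects in the slice categories $G_{c/}$ (Proposition \ref{initialadjunction}), combined with the existence criterion for initial objects established in Proposition \ref{initialcriterion}. By Proposition \ref{initialadjunction}, the functor $G$ admits a left adjoint if and only if the $\infty$-category $G_{c/}$ has an initial object for every $c \in \C$. So the entire theorem reduces to determining, under the stated hypotheses, precisely when each $G_{c/}$ admits an initial object.

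\smallskip

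For the \emph{necessity} direction, suppose $G$ admits a left adjoint. Then $G$ is continuous because right adjoints preserve limits (as recalled in the excerpt following Proposition \ref{initialadjunction}). Moreover, each $G_{c/}$ has an initial object, and an initial object is in particular a weakly initial object, so the singleton set consisting of that object is a small weakly initial set; hence $G$ satisfies the solution set condition. This direction is essentially formal.

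\smallskip

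For the \emph{sufficiency} direction, assume $G$ is continuous and satisfies the solution set condition; I must produce an initial object in each $G_{c/}$. The strategy is to apply Proposition \ref{initialcriterion} to the $\infty$-category $G_{c/}$, which requires verifying that $G_{c/}$ is (i) complete, (ii) locally small, and (iii) admits a small weakly initial set. Condition (iii) is exactly the solution set condition by Definition \ref{solution set}. For (i), I would argue that $G_{c/}$ is complete: since $\D$ is complete and $G$ is continuous, limits in $G_{c/}$ can be computed by taking the corresponding limit in $\D$ and using that $G$ preserves it to equip the limiting object with the required structure map from $c$; concretely, $G_{c/}$ is a pullback/comma construction and the continuity of $G$ together with completeness of $\D$ ensures the relevant cones lift. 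The main obstacle is condition (ii), local smallness of $G_{c/}$: this is where the hypotheses on $\D$ (locally small) and $\C$ (merely $2$-locally small) come into play in a non-obvious way. A morphism in $G_{c/}$ from $(c \to G(d))$ to $(c \to G(d'))$ is a morphism $d \to d'$ in $\D$ compatible with the structure maps, so the mapping space between two objects of $G_{c/}$ fits into a fiber sequence whose total space is $\mathrm{map}_\D(d, d')$ (essentially small, since $\D$ is locally small) and whose base is a path space in $\mathrm{map}_{\C}(c, G(d'))$. The fiber of a map out of an essentially small space is essentially small, and controlling the base requires only that $\mathrm{map}_{\C}(c, G(d'))$ be locally small — which is exactly what $2$-local smallness of $\C$ guarantees. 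I expect the careful identification of this fiber sequence, and the verification that local smallness of the base suffices to conclude essential smallness of the fiber, to be the technical heart of the argument.

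\smallskip

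Once (i)--(iii) are established, Proposition \ref{initialcriterion} yields an initial object in $G_{c/}$ for each $c$, and Proposition \ref{initialadjunction} then produces the left adjoint, completing the proof.
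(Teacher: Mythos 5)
Your proposal is correct and follows essentially the same route as the paper: reduce via Proposition \ref{initialadjunction} to producing an initial object in each $G_{c/}$, verify that $G_{c/}$ is complete (the paper's Lemma \ref{slicecomplete2}) and locally small (Lemma \ref{slicesmall}), and conclude with Proposition \ref{initialcriterion}. Your fiber-sequence sketch for local smallness is exactly the content of Lemma \ref{slicesmall}, where the homotopy fiber of $\mathrm{map}_{\C_{c/}}(u,u') \to \mathrm{map}_{\C}(G(d),G(d'))$ is identified, via the left fibration $\C_{c/} \to \C$, with a mapping space inside $\mathrm{map}_{\C}(c,G(d'))$ --- precisely the point where $2$-local smallness of $\C$ enters, as you anticipated.
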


Using instead the (stronger) $h$-initial object condition, we obtain our second adjoint functor theorem under weaker assumptions on the $\infty$-category $\D$ and no smallness assumption on $\mathscr C$. 

\begin{thm}[GAFT$_{\mathrm{fin}}$]\label{finGAFT}
Let $G \colon \D \to \C$ be a functor between $\infty$-categories, where $\D$ is finitely complete. Then $G$ admits a left adjoint if and only if it is finitely continuous and satisfies the $h$-initial object condition.
\end{thm}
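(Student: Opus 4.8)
The plan is to reduce the whole statement, via Proposition \ref{initialadjunction}, to the existence of an initial object in each comma $\infty$-category $G_{c/}$, $c \in \C$ (choosing once and for all a cartesian fibration over $\Delta^1$ presenting $G$). The ``only if'' direction is then immediate: if $G$ admits a left adjoint it preserves all limits \cite{HTT}, hence is finitely continuous, and by the implication $(1)\Rightarrow(2)$ of Proposition \ref{initialadjunction} each $G_{c/}$ has an initial object, which is in particular $h$-initial; thus the $h$-initial object condition holds.

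For the ``if'' direction, suppose $G$ is finitely continuous and satisfies the $h$-initial object condition. By Proposition \ref{initialadjunction} it suffices to produce an initial object in each $G_{c/}$. Since $G_{c/}$ already has an $h$-initial object by hypothesis, Proposition \ref{finitecriterion} reduces the problem to the following key lemma: \emph{if $\D$ is finitely complete and $G$ is finitely continuous, then $G_{c/}$ is finitely complete for every $c \in \C$.} This is exactly the step that exploits finite completeness of $\D$, and, importantly, it does not require $\C$ to admit finite limits.

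To prove the key lemma I would use the defining pullback $G_{c/} = \D \times_{\C} \C_{c/}$ of the left fibration $\C_{c/} \to \C$ along $G$; in particular $\theta \colon G_{c/} \to \D$ is itself a left fibration. Given a finite simplicial set $K$ and a diagram $q \colon K \to G_{c/}$, set $p := \theta q$ and choose a limit cone $\bar p \colon K^\triangleleft \to \D$ with cone-object $\ell$. Finite continuity of $G$ makes $G\bar p$ a limit cone over $Gp$, i.e. $G(\ell)$ is terminal in $\C_{/Gp}$. The remaining coslice component of $q$ is, adjointly, a cone from $c$ over $Gp$, that is an object $w \in \C_{/Gp}$ with cone-object $c$; terminality of $G(\ell)$ yields an essentially unique morphism $w \to G\bar p$ in $\C_{/Gp}$, whose value on cone-objects is a morphism $u \colon c \to G(\ell)$ and which, read adjointly, is a cone $\bar q_c \colon K^\triangleleft \to \C_{c/}$ extending the coslice component of $q$ and lying over $G\bar p$. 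Pairing $\bar p$ with $\bar q_c$ assembles a cone $\bar q \colon K^\triangleleft \to G_{c/}$ over $q$ with cone-object $(\ell, u)$.

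The hard part will be verifying that $\bar q$ is a genuine limit cone in $G_{c/}$, and not merely an object whose universal property is visible on underlying $\D$-objects. The approach I would take is to test against mapping spaces, using the fiber sequence
\[
\mathrm{map}_{G_{c/}}\big((d, v), (d', v')\big) \simeq \mathrm{map}_{\D}(d, d') \times_{\mathrm{map}_{\C}(c, G d')} \{v'\},
\]
in which the map to $\mathrm{map}_{\C}(c, Gd')$ sends $f$ to $G(f) \circ v$. Since $\ell = \lim p$ and $G(\ell) = \lim Gp$ (as $G$ preserves this limit), both $\mathrm{map}_{\D}(d, -)$ and $\mathrm{map}_{\C}(c, G(-))$ carry the limit cone to a limit of mapping spaces, and, as fibers commute with limits, the canonical comparison $\mathrm{map}_{G_{c/}}\big((d,v),(\ell,u)\big) \to \lim_{k \in K} \mathrm{map}_{G_{c/}}\big((d,v), q(k)\big)$ is an equivalence, so $\bar q$ is a limit cone. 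Conceptually this is just the observation that $\theta$ is the left fibration classified by $\mathrm{map}_{\C}(c, G(-)) \colon \D \to \mathcal S$, which preserves finite limits because $G$ does and corepresentables preserve limits; hence $G_{c/}$ acquires finite limits, computed through $\theta$. I expect the real friction to be bookkeeping: pinning down the comparison map and the compatibility of base-points carefully enough that ``fibers commute with limits'' applies on the nose, or, if one instead verifies terminality of $\bar q$ in $(G_{c/})_{/q}$ directly, carrying out the corresponding horn-filling arguments.
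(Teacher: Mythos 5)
Your proposal is correct and follows essentially the same route as the paper: both directions reduce via Proposition \ref{initialadjunction} to the existence of initial objects in each $G_{c/}$, and the ``if'' direction combines Proposition \ref{finitecriterion} with your key lemma, which is exactly the paper's Lemma \ref{slicecomplete2}. The only difference is that where the paper disposes of this lemma by citing \cite[Lemmas 5.4.5.2 and 5.4.5.5]{HTT} together with \cite[Proposition 1.2.13.8]{HTT}, you prove it directly by lifting the limit cone from $\D$ and verifying its universal property through the mapping-space fiber sequence for the left fibration $G_{c/}\to\D$ --- a sound, self-contained substitute for the citation.
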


For the proofs of these theorems, we will need the following lemmas. 

\begin{lem}\label{slicecomplete2}
Let $G: \D \to \C$ be a functor between $\infty$-categories and $c \in \C$. Suppose that $\D$ is (finitely) complete and $G$ is (finitely) continuous. Then $G_{c/}$ is (finitely) complete.
\end{lem}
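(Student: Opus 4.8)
The plan is to show that the projection $q\colon G_{c/}\to\D$ creates (finite) limits; since $\D$ is (finitely) complete this immediately gives the claim, and the finite and the unbounded cases are handled uniformly by letting $K$ range over the relevant class of small simplicial sets. Unwinding the pullback that defines $G_{c/}$, a diagram $\phi\colon K\to G_{c/}$ amounts to a pair $(\phi_\D,\phi_{c/})$ with $\phi_\D\colon K\to\D$, where $\phi_{c/}\colon K\to\C_{c/}$ is adjoint to a cone $\tilde\phi\colon\Delta^0\star K\to\C$ whose cone-object is $c$ and whose restriction to $K$ is $G\circ\phi_\D$. First I would use completeness of $\D$ to choose a limit cone $\lambda\colon K^\triangleleft\to\D$ of $\phi_\D$, with cone-object $\ell$.

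Next I would apply continuity of $G$: the cone $G\circ\lambda\colon K^\triangleleft\to\C$ is again a limit cone, i.e. a terminal object of $\C_{/G\phi_\D}$. It is crucial here that only the preservation of this one limit is used, so that no completeness hypothesis on $\C$ is required. Now $\tilde\phi$ is a second object of $\C_{/G\phi_\D}$, so terminality of $G\lambda$ supplies an essentially unique morphism of cones $\tilde\phi\to G\lambda$; on cone-objects this is a morphism $u\colon c\to G\ell$. Together with $\lambda$ and the pullback description of $G_{c/}$ (and a join manipulation in the spirit of Lemma \ref{conemorphism}), this morphism of cones assembles into a single cone $\mu\colon K^\triangleleft\to G_{c/}$ over $\phi$ whose cone-object is $(\ell,u)\in G_{c/}$.

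The main obstacle is to verify that $\mu$ is a limit cone, that is, a terminal object of $(G_{c/})_{/\phi}$. The cleanest route is the mapping-space criterion for limits: it suffices that for each $X=(d,w)\in G_{c/}$ the comparison map $\mathrm{map}_{G_{c/}}(X,(\ell,u))\to\lim_{k\in K}\mathrm{map}_{G_{c/}}(X,\phi(k))$ be an equivalence. Since $q$ is a left fibration, being pulled back from the left fibration $\C_{c/}\to\C$, for fixed $X$ each mapping space fits into a fibre sequence
\[
\mathrm{map}_{G_{c/}}(X,Y)\to\mathrm{map}_{\D}(d,qY)\to\mathrm{map}_{\C}(c,GqY),
\]
the fibre being taken over the structure map of $Y$. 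Forming the limit over $K$ commutes with these fibres, and the two resulting limits are computed by $\lim_k\mathrm{map}_{\D}(d,\phi_\D(k))\simeq\mathrm{map}_{\D}(d,\ell)$ and $\lim_k\mathrm{map}_{\C}(c,G\phi_\D(k))\simeq\mathrm{map}_{\C}(c,G\ell)$, which are precisely the assertions that $\lambda$ and $G\lambda$ are limit cones. A comparison of basepoints identifies $u$ with the point induced by the legs of $\tilde\phi$, and hence the target of the comparison map with $\mathrm{map}_{G_{c/}}(X,(\ell,u))$, as required.

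A more combinatorial alternative avoids mapping spaces: the slice construction commutes with pullbacks, giving $(G_{c/})_{/\phi}\simeq\D_{/\phi_\D}\times_{\C_{/G\phi_\D}}(\C_{c/})_{/\phi_{c/}}$, and terminal objects of such a pullback are detected componentwise. This reduces the claim to the statement that the coslice projection $\C_{c/}\to\C$ creates those limits which happen to exist in $\C$, which is the same universal-property argument carried out with $G=\mathrm{id}$. In either approach the finite case is verbatim, restricting $K$ to finite simplicial sets, and the continuity of $G$ enters exactly once, to guarantee that $G\lambda$ remains a limit cone in $\C$.
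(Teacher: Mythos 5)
Your proof is correct, but your main route is genuinely different from the paper's. The paper disposes of this lemma in one line, by adapting \cite[Lemmas 5.4.5.2 and 5.4.5.5]{HTT} and using that the projection $\C_{c/} \to \C$ preserves and reflects limits \cite[Proposition 1.2.13.8]{HTT} --- in substance, this is exactly your ``combinatorial alternative'': compute limits in the pullback $G_{c/} = \D \times_{\C} \C_{c/}$ componentwise. Your main argument instead constructs the limit cone by hand --- the limit cone $\lambda$ in $\D$, then $G\lambda$ (the single use of continuity), then the comparison morphism $u \colon c \to G\ell$ from terminality of $G\lambda$ in $\C_{/G\phi_{\D}}$, assembled by a join manipulation into a cone $\mu \colon K^{\triangleleft} \to G_{c/}$ --- and verifies terminality via the fibre sequence
\[
\mathrm{map}_{G_{c/}}(X,Y) \to \mathrm{map}_{\D}(d, qY) \to \mathrm{map}_{\C}(c, GqY),
\]
i.e.\ the description of mapping spaces in the left fibration $q$ classified by $\mathrm{map}_{\C}(c, G(-))$; this is the same identification the paper itself uses in the proof of Lemma \ref{slicesmall} via \cite[Proposition 2.4.4.2]{HTT}. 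What your route buys is a self-contained argument not routed through Lurie's slice lemmas; the cost is that the two steps you elide are precisely where the content sits and should be written out: the fibre sequence must be made natural in $k \in K$ (so that ``limits commute with fibres'' is applied to an honest map of $K$-diagrams of spaces, with basepoint section supplied by $\tilde\phi$), and the basepoint of the limit fibre must be identified with $u$ under the equivalence $\lim_k \mathrm{map}_{\C}(c, G\phi_{\D}(k)) \simeq \mathrm{map}_{\C}(c, G\ell)$, which is exactly what the cone morphism $\tilde\phi \to G\lambda$ encodes. Both are standard manoeuvres and your sketch of them is accurate.

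One sentence in your alternative route is false as stated: terminal objects in a pullback of $\infty$-categories are \emph{not} detected componentwise in general. If $x \in A \times_B C$ has terminal images in $A$ and in $C$, then (when one leg is a categorical fibration, so that the strict pullback computes the homotopy pullback) $\mathrm{map}(y,x)$ is the homotopy pullback $\mathrm{map}_A(y_A, x_A) \times^{h}_{\mathrm{map}_B(y_B, x_B)} \mathrm{map}_C(y_C, x_C)$, and the middle term need not be contractible, so componentwise terminality does not suffice. In your situation this is repaired by observing that the image of $\mu$ in the base $\C_{/G\phi_{\D}}$ is the cone $G\lambda$, which is terminal precisely by the continuity of $G$; with all three images terminal, contractibility of the homotopy pullback follows. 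Alternatively, one argues as in \cite[Lemma 5.4.5.5]{HTT}, whose essential input is exactly the statement you reduce to, namely that $\C_{c/} \to \C$ preserves and reflects limits \cite[Proposition 1.2.13.8]{HTT} --- this is what the paper does. Since your main argument does not rely on the componentwise claim, the proof stands as given.
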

\begin{proof}
This is shown similarly to \cite[Lemmas 5.4.5.2 and 5.4.5.5]{HTT} using that the functor $\C_{c/} \to \C$ preserves and reflects limits by \cite[Proposition 1.2.13.8]{HTT}.
\end{proof}

\begin{lem}\label{slicesmall}
Let $G: \D \to \C$ be a functor between $\infty$-categories,  where $\D$ is locally small and $\C$ is $2$-locally small. Then for every object $c \in \C$, the $\infty$-category $G_{c/}$ is locally small.
\end{lem}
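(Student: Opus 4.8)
The plan is to compute the mapping spaces of $G_{c/}$ as homotopy fibers and then reduce the assertion to an elementary smallness statement about homotopy fibers of maps of spaces. Recall from the definition of the slice that $G_{c/}$ is the pullback $\D \times_{\C} \C_{c/}$ along the left fibration $\C_{c/} \to \C$; concretely, its objects are pairs $(d,u)$ consisting of an object $d \in \D$ together with a morphism $u \colon c \to G(d)$ in $\C$. Since $\C_{c/} \to \C$ is a left fibration, this square is a homotopy pullback, and mapping spaces in a homotopy pullback are computed as the corresponding homotopy pullback of mapping spaces. Combining this with the standard fiber sequence computing mapping spaces in an undercategory and the pasting law for homotopy pullbacks, I would first establish a natural equivalence
\[
\mathrm{map}_{G_{c/}}\big((d_0, u_0), (d_1, u_1)\big) \simeq \mathrm{fib}_{u_1}\Big( \mathrm{map}_{\C'}...
\Big),
\]
where I abbreviate the right-hand side as the homotopy fiber, taken over the point $u_1$, of the map $\mathrm{map}_{\D}(d_0,d_1) \to \mathrm{map}_{\C}(c, G(d_1))$ given by $\varphi \mapsto G(\varphi) \circ u_0$.

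Write $X := \mathrm{map}_{\D}(d_0, d_1)$ and $Y := \mathrm{map}_{\C}(c, G(d_1))$. By the hypothesis that $\D$ is locally small, $X$ is essentially small \cite[Proposition 5.4.1.7]{HTT}; by the hypothesis that $\C$ is $2$-locally small, $Y$ is locally small, which for the $\infty$-groupoid $Y$ means precisely that all of its path spaces $\mathrm{map}_Y(y_0, y_1)$ are essentially small. The lemma therefore reduces to the following sub-claim: the homotopy fiber over a point of any map $g \colon X \to Y$ of spaces, with $X$ essentially small and $Y$ locally small, is again essentially small.

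To prove the sub-claim, I would model the homotopy fiber as $F = X \times_Y P_{u_1}Y$, where $P_{u_1}Y \to Y$ is the path fibration of paths ending at $u_1$, and consider the projection $q \colon F \to X$. This $q$ is a pullback of the Kan fibration $P_{u_1}Y \to Y$, hence a Kan fibration, and its fiber over $x \in X$ is the path space of $Y$ from $g(x)$ to $u_1$, i.e.\ the mapping space $\mathrm{map}_Y(g(x), u_1)$, which is essentially small because $Y$ is locally small. Thus $q$ is a left fibration over the essentially small $\infty$-groupoid $X$ all of whose fibers are essentially small; by straightening it corresponds to a diagram $X \to \mathcal S$ valued in small spaces, and its total space $F$ is the colimit of this diagram, which is essentially small as a small colimit of small spaces. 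Hence $F$ is essentially small, and consequently $G_{c/}$ is locally small. I expect the sub-claim to be the main obstacle: since $Y$ is only assumed locally small, its set of components $\pi_0(Y)$ may well be large, so one cannot argue that $F$ is small merely as a homotopy limit of small spaces. The crux is to pass to the fibration $q \colon F \to X$, whose fibers are path spaces of $Y$ and are therefore small by local smallness of $Y$; this is exactly the point at which $2$-local smallness of $\C$ (rather than plain local smallness) is used.
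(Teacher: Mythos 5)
Your proof is correct and takes essentially the same approach as the paper's: both compute $\mathrm{map}_{G_{c/}}\bigl((d_0,u_0),(d_1,u_1)\bigr)$ from the pullback square defining $G_{c/}$ (which is a homotopy pullback since $\C_{c/}\to\C$ is a left fibration), identify the relevant homotopy fibers with path spaces of the locally small space $\mathrm{map}_{\C}(c,G(d_1))$ --- the precise point where $2$-local smallness of $\C$ enters --- and conclude by smallness of a total space with essentially small fibers over the essentially small base $\mathrm{map}_{\D}(d_0,d_1)$. The only cosmetic difference is that you verify this last fiberwise-smallness step by hand via the path fibration and straightening, where the paper simply cites \cite[Proposition 5.4.1.4]{HTT}.
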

\begin{proof}
We need to show that for every pair of objects $(u \colon c \to G(d)) \in G_{c/}$ and $(u' \colon c \to G(d')) \in G_{c/}$, the mapping space 
$$\mathrm{map}_{G_{c/}}(u, u')$$
is essentially small (see \cite[Proposition 5.4.1.7]{HTT}). The pullback square 
of $\infty$-categories
\[
\begin{tikzcd}
G_{c/} \arrow[d,"q"] \arrow[r] & \C_{c/} \arrow[d, "p"] \\
\D \arrow[r,"G"] & \C
\end{tikzcd}
\]
yields a homotopy pullback square of mapping spaces 
\begin{equation} \label{diagram-4}
\begin{tikzcd}
\mathrm{map}_{G_{c/}}(u, u') \arrow[d,"q"] \arrow[r] & \mathrm{map}_{\C_{c/}}(u, u') \arrow[d, "p"] \\
\mathrm{map}_{\D}(d, d') \arrow[r,"G"] & \mathrm{map}_{\C}(G(d), G(d')).
\end{tikzcd}
\end{equation}
Since $\C_{c/} \to \C$ is a left fibration, the (homotopy) fiber of the right vertical map is either empty or can be identified using \cite[Proposition 2.4.4.2]{HTT} with the mapping space
\begin{equation} \label{2-mapping-space}
\mathrm{map}_{p^{-1}(G(d'))}(u', u').
\end{equation}
Since $p^{-1}(G(d')) \simeq \mathrm{map}_{\C}(c, G(d'))$ is locally small by assumption, it follows that \eqref{2-mapping-space} is essentially small. Thus, the (homotopy) fibers of the left vertical map in \eqref{diagram-4} are essentially small. Then the result follows from \cite[Proposition 5.4.1.4]{HTT} since $\mathrm{map}_{\D}(d, d')$ is essentially small by assumption.
\end{proof}

\noindent \textbf{Proof of Theorem \ref{GAFT}.} Suppose that $G$ admits a left adjoint. Then using \cite[Proposition 5.2.3.5]{HTT}, the functor $G$ is continuous. By Proposition \ref{initialadjunction}, the $\infty$-cateogry $G_{c/}$ admits an initial object, which also defines a small weakly initial set.

Conversely, by Proposition \ref{initialadjunction}, it is enough to show that the $\infty$-category $G_{c/}$ admits an initial object for each $c \in \C$. By Lemma \ref{slicesmall}, $G_{c/}$ is locally small, and by Lemma \ref{slicecomplete2}, it is complete since $G$ is continuous. The $\infty$-category $G_{c/}$ admits a small weakly initial set by assumption. Therefore it also admits an initial object by Proposition \ref{initialcriterion}. 
\qed

\medskip

\noindent \textbf{Proof of Theorem \ref{finGAFT}.} Suppose that $G$ admits a left adjoint. Then using \cite[Proposition 5.2.3.5]{HTT}, the functor $G$ is (finitely) continuous. Moreover, for each $c \in \C$, the $\infty$-category $G_{c/}$ has an initial object by Proposition \ref{initialadjunction}, and therefore also an $h$-initial object.  

Conversely, suppose that $G_{c/}$ has an $h$-initial object for each $c \in \C$. 
By Lemma \ref{slicecomplete2}, the $\infty$-category $G_{c/}$ is finitely 
complete since $G$ is finitely continuous. Then Proposition \ref{finitecriterion} shows that $G_{c/}$ 
admits an initial object for each $c \in \C$, and therefore the result follows from the characterization in Proposition \ref{initialadjunction}.
\qed

\subsection{Adjunctions and homotopy categories} An adjunction $F \colon \C \rightleftarrows \D \colon G$ between $\infty$-categories induces an (ordinary) adjunction $\h F \colon \h \C \rightleftarrows \h\D \colon \h G$ between the homotopy categories. The converse statement, however, is false in general (for example, the canonical functor $\C \to \h\C$ 
does not admit a left or a right adjoint in general). 

In this subsection we show the converse statement under appropriate additional assumptions. Our main results are direct consequences of the general adjoint functor theorems of the previous subsection. They will essentially follow from the observation that both the solution set condition of Definition \ref{solution set} and the $h$-initial object condition of Definition \ref{h-initial} can be tested at the level of the homotopy category.

\begin{thm}\label{finitehomotopyadjunction}
Let $\D$ be an $\infty$-category which admits finite limits and $\C$ an $\infty$-category. Let $G \colon \D \to \C$ be a functor which preserves finite limits. 
Then $G$ admits a left adjoint if and only if $\h G \colon \h \D \to \h \C$	does.
\end{thm}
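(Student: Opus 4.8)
The plan is to deduce this from the second general adjoint functor theorem. The forward implication is the standard fact recalled at the start of this subsection: a bicartesian fibration witnessing an adjunction $F\colon \C\rightleftarrows\D\colon G$ descends to one witnessing $\h F\colon\h\C\rightleftarrows\h\D\colon\h G$, so if $G$ has a left adjoint then so does $\h G$. The entire content is the converse. Here I would invoke Theorem \ref{finGAFT}: since $\D$ is finitely complete and $G$ preserves finite limits, it suffices to verify that $G$ satisfies the $h$-initial object condition, i.e. that $G_{c/}$ admits an $h$-initial object for every $c\in\C$. Note that by Lemma \ref{slicecomplete2} each $G_{c/}$ is finitely complete, so by Proposition \ref{finitecriterion} an $h$-initial object of $G_{c/}$ is automatically initial; thus producing an $h$-initial object is exactly what is needed.

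Next I would extract a candidate from the adjoint of $\h G$. By the $1$-categorical case of Proposition \ref{initialadjunction}, a left adjoint of $\h G$ amounts to a choice, for each $c$, of an initial object of the comma category $(\h G)_{c/}$, that is, a universal arrow $\eta_c\colon c\to G(d_0)$ in $\h\C$. Choosing a $1$-simplex of $\C$ representing $\eta_c$ yields an object $u_0\in G_{c/}$. Using the lifting property of the homotopy category employed in the proof of Proposition \ref{homotopyinitialset} (lifts of $\partial\Delta^n\to\C$ for $n\le 2$), the \emph{existence} half of the universal property of $\eta_c$ shows that $\mathrm{map}_{G_{c/}}(u_0,u')$ is non-empty for every $u'\in G_{c/}$; equivalently, $u_0$ is weakly initial.

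The remaining, and principal, point is to upgrade weak initiality to $h$-initiality, i.e. to show that each $\mathrm{map}_{G_{c/}}(u_0,u')$ is connected. Here I would use the homotopy pullback square of Lemma \ref{slicesmall} to identify $\mathrm{map}_{G_{c/}}(u_0,u')$ with the homotopy fibre over $u'$ of the natural map
\[
\psi_{d'}\colon \mathrm{map}_\D(d_0,d')\longrightarrow \mathrm{map}_\C(c,G(d')),\qquad \varphi\mapsto G(\varphi)\circ\eta_c .
\]
Both sides are finitely continuous functors $\D\to\mathcal{S}$ (the target because $G$ preserves finite limits), and the universality of $\eta_c$ says precisely that $\psi_{d'}$ is a bijection on $\pi_0$ for every $d'$. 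The goal is therefore to promote this objectwise $\pi_0$-equivalence of finitely continuous functors to an objectwise equivalence: then every homotopy fibre of $\psi_{d'}$ is contractible, so $\mathrm{map}_{G_{c/}}(u_0,u')$ is contractible and $u_0$ is even initial. For this I would run the cotensor argument of Proposition \ref{finitecriterion}: applying the hypothesis to the finite cotensors $d'^{K}$, which exist as $\D$ is finitely complete and are preserved by both functors, shows that $\mathrm{map}(K,\psi_{d'})$ is a $\pi_0$-bijection for every finite simplicial set $K$.

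The step I expect to be the main obstacle is exactly this promotion — passing from ``$\pi_0$-bijection after cotensoring by every finite $K$'' to ``weak equivalence''. The difficulty is that the $\pi_0$-level data only records \emph{free} homotopy classes (for $K=S^1$, conjugacy classes in $\pi_1$), so the connectedness of $\mathrm{map}_{G_{c/}}(u_0,u')$ is genuinely a statement about \emph{based} higher homotopy of the mapping spaces that is not formally implied by the $\pi_0$-isomorphism alone. Resolving it requires using the finite limits of $\D$ and their preservation by $G$ essentially — forming pullbacks that realise the relevant loop spaces compatibly with the distinguished basepoints $\mathrm{id}_{d_0}$ and $\eta_c$ — together with the naturality of $\psi$, so that the $\pi_0$-information assembled over all objects of $\D$ controls the higher homotopy groups. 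Once $\psi$ is shown to be an equivalence, the theorem follows by Theorem \ref{finGAFT} as above.
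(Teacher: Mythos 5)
Your skeleton agrees with the paper's up to the decisive step: the forward direction is the standard descent of adjunctions to homotopy categories, the converse goes through Theorem \ref{finGAFT}, and the candidate $u_0\in G_{c/}$ lifted from the universal arrow $\eta_c$ of the adjoint of $\h G$ is weakly initial by the existence half of universality, exactly as in the paper. But the proof is incomplete precisely where you flag it. The fiber sequence $\mathrm{map}_{G_{c/}}(u_0,u')\to\mathrm{map}_{\D}(d_0,d')\xrightarrow{\psi_{d'}}\mathrm{map}_{\C}(c,G(d'))$ shows that connectedness of the fiber over $u'$ amounts to surjectivity of $\pi_1(\psi_{d'})$ at the relevant basepoints, which is genuinely \emph{based} information; and, as you yourself observe, the cotensor argument only yields bijections on $\pi_0\,\mathrm{map}(K,-)$, i.e.\ on \emph{free} homotopy classes (conjugacy classes for $K$ a circle), which do not formally control $\pi_1(\psi_{d'})$. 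Your closing sentence (``forming pullbacks that realise the relevant loop spaces \dots so that the $\pi_0$-information \dots controls the higher homotopy groups'') is the right instinct but is left as a gesture, so the central step of the theorem is missing from the proposal.

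The paper closes exactly this gap with a purely $1$-categorical device that bypasses all mapping-space analysis. It compares initial objects along the canonical functor $\h(G_{c/})\to \h G_{c/}$ (note that an $h$-initial object of $G_{c/}$ is by definition an initial object of $\h(G_{c/})$, which is all that Theorem \ref{finGAFT} consumes, via Proposition \ref{finitecriterion}; you did not need $\psi$ to be an equivalence). This comparison functor is surjective on objects, full, and conservative, and Lemma \ref{initialcomparison} states that such a functor reflects initial objects provided every parallel pair in the source admits an equalizing morphism. This is where the finite-limit hypotheses enter: by Lemma \ref{slicecomplete2} the slice $G_{c/}$ has finite limits, so two morphisms $[\varphi_1],[\varphi_2]\colon u_0\to u'$ in $\h(G_{c/})$ are equalized by the image $[\psi]$ of an honest equalizer $\psi\colon w\to u_0$ in $G_{c/}$. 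The reflection argument --- initiality of the image of $u_0$ gives a section of the image of $\psi$; fullness lifts it; conservativity makes the composite invertible; so $\psi$ is split epi and $[\varphi_1]=[\varphi_2]$ --- is precisely the substitute for the $\pi_1$-surjectivity you were missing: the equalizer carries endpoint-fixed identification data that the free-homotopy count through cotensors cannot see. The shortest repair of your write-up is therefore to replace the attempted promotion of $\psi_{d'}$ to an equivalence by this equalizer/split-epimorphism argument establishing $h$-initiality of $u_0$, and then conclude by Theorem \ref{finGAFT} as you intended.
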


The proof requires the following technical lemma.

\begin{lem}\label{initialcomparison}
Let $G \colon \D \to \C$ be a functor between (ordinary) categories. Suppose that $G$ is surjective on objects, full, and conservative. Furthermore, suppose that for any pair of morphisms $f,g\colon d\to d'$ in $\D$, there exists a morphism $u_{f,g}\colon w \to d$ such that $f\circ u_{f,g}=g\circ u_{f,g}$. Then $x \in \D$ is  initial if and only if $G(x)$ is initial in $\C$.
\end{lem}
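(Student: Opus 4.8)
The plan is to prove the two implications separately, observing in advance that the hypotheses split cleanly between them: surjectivity on objects and fullness drive the forward direction, while fullness, conservativity, and the weak-equalizer hypothesis drive the converse.

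First I would show that if $x \in \D$ is initial, then $G(x)$ is initial in $\C$. Fix $c \in \C$. For the existence of a morphism $G(x) \to c$, I use surjectivity on objects to write $c = G(d)$ for some $d \in \D$, and then transport the unique morphism $x \to d$ through $G$. For uniqueness, given $\alpha, \beta \colon G(x) \to c$, I again write $c = G(d)$ and use fullness to lift $\alpha$ and $\beta$ to morphisms $f, g \colon x \to d$ in $\D$ with $G(f) = \alpha$ and $G(g) = \beta$; since $x$ is initial we have $f = g$, hence $\alpha = \beta$. This direction is routine and uses only that $G$ is surjective on objects and full.

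The substance lies in the converse: assuming $G(x)$ is initial in $\C$, I must show that $x$ is initial in $\D$. Fix $d \in \D$. Existence of a morphism $x \to d$ is immediate from fullness, since $G$ carries $\mathrm{Hom}_{\D}(x,d)$ onto $\mathrm{Hom}_{\C}(G(x), G(d))$ and the latter is a singleton. The crux is uniqueness: given $f, g \colon x \to d$, I must deduce $f = g$. The naive first move of applying $G$ yields only $G(f) = G(g)$, as both are the unique arrow out of the initial object $G(x)$, and this does not suffice because $G$ is not assumed faithful. Bridging this faithfulness gap is the main obstacle, and it is precisely here that the three remaining hypotheses must be used together.

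To close the gap I would invoke the weak-equalizer hypothesis to produce $u_{f,g} \colon w \to x$ with $f \circ u_{f,g} = g \circ u_{f,g}$, and then argue that $u_{f,g}$ is (split) epic, so that it may be cancelled. Concretely: initiality of $G(x)$ provides a morphism $s \colon G(x) \to G(w)$, and since the unique endomorphism of an initial object is the identity, $G(u_{f,g}) \circ s = \mathrm{id}_{G(x)}$. Using fullness, I lift $s$ to $t \colon x \to w$ with $G(t) = s$; then the endomorphism $e := u_{f,g} \circ t \colon x \to x$ satisfies $G(e) = \mathrm{id}_{G(x)}$, an isomorphism, so $e$ is itself an isomorphism of $\D$ because $G$ is conservative. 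Precomposing $f \circ u_{f,g} = g \circ u_{f,g}$ with $t$ gives $f \circ e = g \circ e$, and cancelling the isomorphism $e$ yields $f = g$, as required. I expect the only delicate point to be exactly this bookkeeping that upgrades ``the image under $G$ is the identity'' into an honest isomorphism in $\D$, where conservativity, fullness, and the weak-equalizer condition are brought to bear in concert.
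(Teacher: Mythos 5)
Your proof is correct and follows essentially the same route as the paper's: the forward direction via surjectivity on objects and fullness, and the converse by equalizing $f,g$ with $u_{f,g}\colon w \to x$, lifting a section $s\colon G(x)\to G(w)$ through fullness, and using conservativity to make $u_{f,g}\circ t$ an isomorphism. The only cosmetic difference is that the paper cancels $u_{f,g}$ as a split epimorphism while you cancel the isomorphism $u_{f,g}\circ t$ directly, which is the same argument.
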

\begin{proof}
Since $G$ is full and surjective on objects, it follows that it preserves initial objects. Conversely, suppose that $G(x)$ is initial in $\C$ for some object $x \in \D$. We claim that $x$ is initial in $\D$. It is clear that $x$ is weakly initial, since $G$ is full. Suppose we have two morphisms $f,g \colon x \to d$ in $\D$. By assumption, there exists a morphism $u_{f,g} \colon w \to x$ which equalizes $f$ and $g$. The induced morphism $G(w)\to G(x)$ admits a section $s \colon G(x) \to G(w)$, since $G(x)$ is initial. Using that $G$ is full, we find a morphism $v\colon x\to w$ such that $G(v)=s$. Since $G$ is conservative, the composition $u_{f,g} \circ v \colon x \to x$ is an isomorphism. This means that $u_{f,g}$ is a (split) epimorphism which implies that $f = g$. 
\end{proof}

\noindent \textbf{Proof of Theorem \ref{finitehomotopyadjunction}.}
By Theorem \ref{finGAFT}, $G$ has a left adjoint if and only if $\h(G_{c/})$ has an initial object for each $c \in \C$. On the other hand, $\h G$ has a left adjoint if and only if the slice category $\h G_{c/}$ has an initial object. Thus, we will need to compare initial objects of $\h(G_{c/})$ with initial objects of $\h G_{c/}$.

There is a canonical functor $\C_{c/}\to (\h \C)_{c/}$ which is given by sending morphisms to their homotopy classes. This is surjective on objects and full by 
the construction of the homotopy category. It induces a functor between the slice $\infty$-categories $G_{c/} \to \h G_{c/}$ which descends to a functor from the homotopy category
$$\h(G_{c/})\to \h G_{c/}.$$
Then it suffices to show that this last functor satisfies the assumptions of Lemma \ref{initialcomparison}.
It is easy to see that it is surjective on objects and full (see 
the proof of Proposition \ref{homotopyinitialset}). It is also conservative 
because equivalences in $G_{c/}$ are detected in $\D$. Therefore it remains to show that for each pair of morphisms $[\varphi_1],[\varphi_2]\colon f \to g$ in $\h(G_{c/})$, there exists a morphism that equalizes them. Choose representatives $\varphi_1$ and $\varphi_2$ in $G_{c/} $. Since $\D$ has finite limits and $G$ preserves them, $G_{c/}$ also has finite limits by Lemma \ref{slicecomplete2}. In particular, there is an equalizer of $\varphi_1$ and $\varphi_2$,
\[
\begin{tikzcd}
w \rar{\psi}& f \rar[shift left = 1.3]{\varphi_1} \rar[shift right =1.3,swap]{\varphi_2} &g.
\end{tikzcd}
\]
Then $[\psi]$ equalizes $[\varphi_1]$ and $[\varphi_2]$ and the result follows.
\qed

\begin{remark} \label{homotopy-adjunction-rem}
A weaker version of Theorem \ref{finitehomotopyadjunction} can also be obtained directly from Theorem \ref{GAFT}. Let $\D$ be a locally small and complete $\infty$-category and let $G \colon \D \to \C$ be a continuous functor, where $\C$ is 
$2$-locally small. Then $G$ admits a left adjoint if and only if $\h G$ does -- because $G$ satisfies the solution set condition if $\h G$ does (Proposition \ref{homotopyinitialset}).
\end{remark}

\begin{example}
We mention a few elementary examples to indicate the necessity of the assumptions in Theorem \ref{finitehomotopyadjunction}. First, it is crucial that the functor $G$ preserves finite limits. For example, given a finitely complete $\infty$-category $\D$, the canonical functor $\gamma \colon \D \to \h \D$ is not an adjoint in general, even though it clearly induces one between 
the homotopy categories. Moreover, this assumption in Theorem \ref{finitehomotopyadjunction} is also needed even in the case where a functor $F \colon \C \to \D$ exists such that the induced pair of functors $\h F \colon \h \C \rightleftarrows \h \D \colon \h G$ forms an adjoint pair. For example, let $\C = N_{\Delta}(\widehat{\C})$ be the coherent nerve of the simplicial category $\widehat{\C}$ which has exactly two objects $x$ and $y$ and only non-identity morphisms from $x$ to $y$. There is a pair of functors $F \colon \C \rightleftarrows \Delta^0 \colon G$, where 
$G$ is defined by $y \in \C$. Then $(\h F, \h G)$ is an adjoint pair if and only if $\mathrm{map}_{\C}(x, y)$ is non-empty and connected, whereas $(F, G)$ is an adjoint pair if and only if 
the mapping space is contractible -- in which case, $G$ indeed preserves finite limits. In addition, it is also worth remarking that under the assumptions of Theorem \ref{finitehomotopyadjunction}, given a functor $F \colon \C \to \D$ such that $(\h F, \h G)$ is an adjoint pair, it does not follow that $F$ is an adjoint of $G$, as there may be different 
such functors which induce the same functor on the homotopy category. For example, for $\C = N_{\Delta}(\widehat{\C})$ as before, where $\mathrm{map}_{\C}(x, y)$ is connected but 
not contractible, the functor $F$ defined as the composition $\C \to \h \C \simeq \Delta^1 \xrightarrow{u} \C$, for some $u \colon x \to y$ in $\C$, induces the identity functor between the homotopy categories, but $F$ is clearly not an adjoint of the identity. 
\end{example}

An interesting special case of Theorem \ref{finitehomotopyadjunction} is the following result about equivalences of $\infty$-categories. This result is shown  using different methods in \cite[Theorem 7.6.10]{Cis2} and a weaker version of the result can also be found in \cite[Proposition 2.15]{Bar}. Analogous results for Waldhausen categories are obtained in \cite{BM} and \cite{Cis}. 

\begin{coro}
Let $\C$, $\D$ and $G \colon \D \to \C$ be as in Theorem \ref{finitehomotopyadjunction}. Then $G$ is an equivalence of $\infty$-categories 
if and only if $\h G$ is an equivalence of (ordinary) categories.	
\end{coro}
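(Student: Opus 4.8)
The plan is to treat the two implications separately; the forward direction is purely formal, and the content lies in the converse, which I will deduce from Theorem~\ref{finitehomotopyadjunction}. Note first that neither the finite completeness of $\D$ nor the finite-limit preservation of $G$ is needed for the easy direction.

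Suppose $G$ is an equivalence of $\infty$-categories, with inverse $G'$ and natural equivalences $G G' \simeq \mathrm{id}_{\C}$ and $G' G \simeq \mathrm{id}_{\D}$. Applying the homotopy category functor $\h$, which is a functor sending natural equivalences (componentwise equivalences) to natural isomorphisms (componentwise isomorphisms), yields $\h G \circ \h G' \cong \mathrm{id}$ and $\h G' \circ \h G \cong \mathrm{id}$. Hence $\h G$ is an equivalence of ordinary categories.

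For the converse, suppose $\h G$ is an equivalence. In particular $\h G$ admits a left adjoint, namely any quasi-inverse (which serves as both a left and a right adjoint of an equivalence). By Theorem~\ref{finitehomotopyadjunction}, $G$ therefore admits a left adjoint $F \colon \C \to \D$; write $u \colon \mathrm{id}_{\C} \to G F$ and $\epsilon \colon F G \to \mathrm{id}_{\D}$ for its unit and counit. This adjunction induces an adjunction $\h F \dashv \h G$ of homotopy categories, whose unit and counit are the images $\h u$ and $\h \epsilon$. Since $\h G$ is an equivalence, a right adjoint which is an equivalence necessarily has invertible unit and counit (its left adjoint is forced to be a quasi-inverse by uniqueness of adjoints), so $\h u$ and $\h \epsilon$ are natural isomorphisms. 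Finally, because a morphism of an $\infty$-category is an equivalence exactly when its image in the homotopy category is an isomorphism, each component $u_c \colon c \to G F(c)$ is an equivalence in $\C$ and each component of $\epsilon$ is an equivalence in $\D$. Thus $u$ and $\epsilon$ are natural equivalences, $(F, G)$ is an adjoint equivalence, and $G$ is an equivalence of $\infty$-categories.

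I expect the main obstacle to be the bookkeeping in the last step rather than any conceptual difficulty: one must confirm that the unit and counit of the induced homotopy adjunction are precisely the homotopy classes $\h u$ and $\h \epsilon$ of the $\infty$-categorical unit and counit, and then invoke the characterization of equivalences via the homotopy category to pass from invertibility of $\h u, \h\epsilon$ back to $u, \epsilon$ being componentwise equivalences. Both facts are standard, but they are exactly where the argument's content resides; everything else is a formal consequence of Theorem~\ref{finitehomotopyadjunction}, which does all the real work in promoting the homotopy-level inverse to an $\infty$-categorical adjoint.
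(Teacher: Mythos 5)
Your proof is correct and follows essentially the same route as the paper: invoke Theorem~\ref{finitehomotopyadjunction} to produce the left adjoint $F$, then upgrade the adjunction to an equivalence by noting that the unit and counit become isomorphisms in the homotopy categories and that equivalences are detected there. The paper merely packages your final step more compactly, as conservativity of the canonical functor $\mathrm{Fun}(\C,\C)\to \mathrm{Fun}(\h\C,\h\C)$, which is exactly your componentwise detection of equivalences via $\h\C$.
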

\begin{proof}
Note that for any $\infty$-category $\C$ the canonical functor
\[
\mathrm{Fun}(\C,\C)\to \mathrm{Fun}(\C,\h\C)\simeq \mathrm{Fun}(\h\C,\h\C)
\]
is conservative. Hence by Theorem \ref{finitehomotopyadjunction}, if $\h G$ is an equivalence then $G$ admits a left adjoint $F \colon \C \to \D$ such that the unit and counit transformations of the adjunction $(F, G)$ are natural equivalences of functors. 
The converse is obvious. 
\end{proof} 

\begin{example}
Let $\D$ be an $\infty$-category which admits finite limits. Suppose that the canonical functor $\D \to \h\D$ preserves finite limits. Then $\D \simeq \h\D$, i.e., $\D$ is equivalent to a finitely complete ordinary category.
\end{example}

\section{Adjoint functor theorems for presentable $\infty$-categories}

\subsection{Statement of results} Adjoint functor theorems for presentable $\infty$-categories are obtained in \cite{HTT} where the theory of presentable $\infty$-categories is developed extensively. These theorems give useful characterizations for a functor between presentable $\infty$-categories to be a left or a right adjoint and they generalize analogous classical results about locally presentable categories.  

In this section we recover these adjoint functor theorems as applications 
of GAFT (Theorem \ref{GAFT}). The first one identifies right adjoint functors 
and recovers \cite[Corollary 5.5.2.9(2)]{HTT}. 

\begin{thm}[RAFT] \label{RAFT}
Let $\D$ be a presentable $\infty$-category and $\C$ a locally small $\infty$-category such that each object $c \in \C$ is $\kappa$-compact for some regular cardinal $\kappa$.
Let $G \colon \D \to \C$ be a functor. 
\begin{enumerate}
\item[(1) ] If $G$ preserves small limits and is accessible (i.e. it preserves $\kappa$-filtered colimits for some regular cardinal $\kappa$), then $G$ is a right adjoint.
\item[(2)] Suppose that $\C$ is an accessible $\infty$-category. If $G$ is a right adjoint, then it is accessible and preserves all small limits. 
\end{enumerate} 
\end{thm}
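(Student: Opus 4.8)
The plan is to derive both parts from the General Adjoint Functor Theorem (Theorem~\ref{GAFT}), with the accessibility of $G$ in part~(2) handled separately by a direct computation with mapping spaces.

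\emph{Part (1).} First I would check the hypotheses of Theorem~\ref{GAFT}. A presentable $\infty$-category is locally small and admits all small limits, so $\D$ is locally small and complete; and since $\C$ is locally small it is in particular $2$-locally small. The functor $G$ is continuous by hypothesis, so the only thing left to establish is the solution set condition of Definition~\ref{solution set}. To build the solution set, fix $c \in \C$ and choose a regular cardinal $\kappa$ large enough that three conditions hold at once: $c$ is $\kappa$-compact; $G$ preserves $\kappa$-filtered colimits; and $\D$ is $\kappa$-accessible, so that its full subcategory $\D^{\kappa}$ of $\kappa$-compact objects is essentially small and generates $\D$ under $\kappa$-filtered colimits. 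Such a $\kappa$ exists because the first two requirements hold for all sufficiently large $\kappa$ while the third holds for a cofinal class of regular cardinals. Given an arbitrary object $(u \colon c \to G(d)) \in G_{c/}$, I would write $d \simeq \colim_i d_i$ as a $\kappa$-filtered colimit of $\kappa$-compact objects $d_i \in \D^{\kappa}$. Since $G$ preserves this colimit and $c$ is $\kappa$-compact, the identification $\mathrm{map}_{\C}(c, G(d)) \simeq \colim_i \mathrm{map}_{\C}(c, G(d_i))$ shows that $u$ factors, up to homotopy, as $c \xrightarrow{v} G(d_i) \to G(d)$ for some $i$. The resulting object $(v \colon c \to G(d_i)) \in G_{c/}$ then admits a morphism to $(u \colon c \to G(d))$. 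Hence, choosing one representative $(v \colon c \to G(d_{\alpha}))$ for each equivalence class of object $d_{\alpha} \in \D^{\kappa}$ and each homotopy class $[v] \in \pi_0\,\mathrm{map}_{\C}(c, G(d_{\alpha}))$ yields a weakly initial set in $G_{c/}$; it is small since $\D^{\kappa}$ is essentially small and $\C$ is locally small. Thus $G$ satisfies the solution set condition and Theorem~\ref{GAFT} provides a left adjoint.

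\emph{Part (2).} Let $F \dashv G$ exhibit $G$ as a right adjoint. That $G$ preserves all small limits is immediate, since right adjoints preserve limits \cite[Proposition 5.2.3.5]{HTT}. It remains to show that $G$ is accessible. Since $\C$ is accessible, it admits a small set $\{c_j\}_{j}$ of $\kappa_0$-compact generators for some regular cardinal $\kappa_0$, and maps out of the $c_j$ jointly detect equivalences in $\C$. As $\D$ is presentable, each object $F(c_j)$ is $\lambda_j$-compact for some regular $\lambda_j$; I would then fix a regular cardinal $\lambda \geq \kappa_0$ bounding all the $\lambda_j$, so that every $F(c_j)$ is $\lambda$-compact. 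For any $\lambda$-filtered diagram $\{d_i\}$ in $\D$, the adjunction together with the $\lambda$-compactness of $F(c_j)$ gives natural equivalences $\mathrm{map}_{\C}(c_j, G(\colim_i d_i)) \simeq \mathrm{map}_{\D}(F(c_j), \colim_i d_i) \simeq \colim_i \mathrm{map}_{\D}(F(c_j), d_i) \simeq \colim_i \mathrm{map}_{\C}(c_j, G(d_i))$ for every $j$. Since the $c_j$ detect equivalences, the canonical comparison $\colim_i G(d_i) \to G(\colim_i d_i)$ is an equivalence, so $G$ preserves $\lambda$-filtered colimits and is therefore accessible.

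\emph{Main obstacle.} I expect the crux to be the solution set construction in part~(1): one must arrange the three smallness and accessibility conditions on $\kappa$ simultaneously and then verify that the factorization argument produces a genuine small \emph{set} of objects of the slice $G_{c/}$, rather than merely a bound on cardinalities. By contrast, part~(2) is routine once one invokes that every object of a presentable $\infty$-category is $\kappa$-compact for all sufficiently large $\kappa$, which supplies the common cardinal $\lambda$.
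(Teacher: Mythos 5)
Your proposal is correct and follows essentially the same route as the paper's proof: part (1) applies Theorem~\ref{GAFT} after building a small weakly initial set in $G_{c/}$ from one representative per homotopy class of maps $c \to G(d)$ with $d$ ranging over (a small model of) $\D^{\kappa}$, using the $\kappa$-compactness of $c$ and the accessibility of $G$ to factor an arbitrary $u \colon c \to G(x)$ through the filtered colimit presentation of $x$; part (2) matches the paper's argument of choosing $\tau$ (your $\lambda$) so that $F$ carries $\kappa$-compact objects to $\tau$-compact ones and testing the comparison map $\colim_K (G \circ \Phi) \to G(\colim_K \Phi)$ against compact objects, which detect equivalences since $\C$ is accessible. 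The only cosmetic difference is that the paper tests against all $\kappa$-compact objects of $\C$ rather than a chosen generating set, which is an equivalent formulation.
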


The corresponding result of \cite{HTT} for left adjoint functors states that a functor between presentable $\infty$-categories is a left adjoint if and only if 
it preserves small colimits \cite[Corollary 5.5.2.9(1)]{HTT}. We prove a version  of this for more general $\infty$-categories that satisfy the following property.

\begin{defn}
We say that an $\infty$-category $\C$ has an \emph{essentially small colimit-dense subcategory}
if there is an essentially small full subcategory $\C_0 \subset \C$ such that every object $x \in \C$ is a colimit of a diagram $K \to \C_0 \subset \C$ with values in $\C_0$, where $K$ is small simplicial set.
\end{defn}

\begin{thm} \label{LAFT}
Let $\C$ be a locally small cocomplete $\infty$-category and $\D$ a locally small $\infty$-category. Suppose that $\C$ has an essentially small colimit-dense subcategory. Then 
a functor 
$F \colon \C \to \D$ is a left adjoint if and only if it preserves small colimits.
\end{thm}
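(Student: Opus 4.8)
The implication that a left adjoint preserves small colimits is immediate from \cite[Proposition 5.2.3.5]{HTT}, so I focus on the converse. The plan is to deduce it from Theorem \ref{GAFT} applied to the opposite functor $F^{\op}\colon \C^{\op}\to\D^{\op}$. Since $F$ admits a right adjoint if and only if $F^{\op}$ admits a left adjoint, and since $\C^{\op}$ is locally small and complete while $\D^{\op}$ is $2$-locally small (being even locally small), Theorem \ref{GAFT} reduces the problem to verifying that the continuous functor $F^{\op}$ satisfies the solution set condition of Definition \ref{solution set}. Unwinding the opposites, the coslice $(F^{\op})_{d/}$ is identified with $(F_{/d})^{\op}$, so a small weakly initial set for $(F^{\op})_{d/}$ is the same as a small weakly \emph{terminal} set for the comma $\infty$-category $F_{/d}$. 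Everything thus comes down to producing, for each $d\in\D$, a small weakly terminal set in $F_{/d}$.

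\textbf{Structure of $F_{/d}$.} First I would record the relevant properties of $F_{/d}$. Applying Lemma \ref{slicesmall} to $F^{\op}$ shows that $(F_{/d})^{\op}$, and hence $F_{/d}$, is locally small; applying Lemma \ref{slicecomplete2} to $F^{\op}$ shows that $(F_{/d})^{\op}$ is complete, i.e. $F_{/d}$ is cocomplete, with colimits created by the projection $F_{/d}\to\C$ (here one uses that $F$ is cocontinuous). Next I would transport the colimit-dense subcategory: let $\C_0\subset\C$ be the essentially small colimit-dense subcategory, and let $(F_{/d})_0\subset F_{/d}$ be the full subcategory spanned by the objects $(c_0, F(c_0)\to d)$ with $c_0\in\C_0$. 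Since $\C_0$ is essentially small and each $\mathrm{map}_{\D}(F(c_0),d)$ is essentially small, the space of objects of $(F_{/d})_0$ is essentially small, and as $F_{/d}$ is locally small it follows that $(F_{/d})_0$ is essentially small. Moreover $(F_{/d})_0$ is itself colimit-dense in $F_{/d}$: given any $(c,\alpha)\in F_{/d}$, write $c=\colim_{k\in K}p(k)$ with $p\colon K\to\C_0$; since $F_{/d}\to\C$ creates colimits and $\alpha$ is the map induced by the cocone $\alpha_k\colon F(p(k))\to F(c)\xrightarrow{\alpha}d$, one obtains $(c,\alpha)=\colim_{k\in K}(p(k),\alpha_k)$ with $(p(k),\alpha_k)\in(F_{/d})_0$.

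\textbf{The key observation.} The heart of the argument is the following remark, which converts this colimit-density into a weakly terminal set. As $(F_{/d})_0$ is essentially small and $F_{/d}$ is cocomplete, the colimit of the inclusion $\iota\colon(F_{/d})_0\hookrightarrow F_{/d}$ exists; call it $t_d$, with colimit cocone $\theta\colon\iota\Rightarrow\Delta t_d$. I claim that the singleton $\{t_d\}$ is weakly terminal in $F_{/d}$. Indeed, let $z\in F_{/d}$ be arbitrary and write $z=\colim_{k\in K}\iota\psi(k)$ for some $\psi\colon K\to(F_{/d})_0$ as above. Restricting the cocone $\theta$ along $\psi$ produces a cocone under $\iota\psi$ with vertex $t_d$, and the universal property of $z$ as the colimit of $\iota\psi$ yields a morphism $z\to t_d$; in particular $\mathrm{map}_{F_{/d}}(z,t_d)$ is non-empty. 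Hence $\{t_d\}$ is a small weakly terminal set in $F_{/d}$, which is exactly the solution set condition for $F^{\op}$ at $d$. By Theorem \ref{GAFT} the functor $F^{\op}$ admits a left adjoint, so $F$ admits a right adjoint, i.e. $F$ is a left adjoint.

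\textbf{Where the difficulty lies.} The only place where the colimit-dense hypothesis is genuinely used is the last observation, and this is the step I expect to be the main obstacle conceptually. Colimit-density provides, a priori, maps \emph{into} each object of $F_{/d}$ from the essentially small family $(F_{/d})_0$, which makes that family weakly \emph{initial} rather than weakly terminal; the insight is that the single object $t_d=\colim(F_{/d})_0$ absorbs all of these colimit presentations and thereby becomes weakly \emph{terminal}. This reversal of direction is the crux, whereas the identification of the hypotheses under passage to opposites, and the verification that $F_{/d}$ is locally small, cocomplete, and carries an essentially small colimit-dense subcategory, are routine once the corresponding facts for slices (Lemmas \ref{slicesmall} and \ref{slicecomplete2}) are in hand.
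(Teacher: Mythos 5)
Your proposal is correct and takes essentially the same route as the paper's proof: reduce via opposite categories to the solution set condition of Theorem \ref{GAFT}, identify the essentially small full subcategory of $F_{/d}$ lying over $\C_0$ (the paper's pullback $(F\circ\iota)_{/d}$), and exhibit its colimit as a weakly terminal object by lifting colimit presentations of objects $(c,g)$ along the projection $F_{/d}\to\C$ and restricting the universal cocone. The only difference is one of detail: where you invoke that the projection creates colimits, the paper carries out the lift of the cocone $g$ explicitly via an inner anodyne extension and verifies the lifted cocone is a colimit using \cite[Lemma 5.4.5.5]{HTT}.
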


Both Theorem \ref{RAFT} and Theorem \ref{LAFT} are consequences of GAFT, but the proofs in each case are quite different. We point out that we do not know an $\infty$-categorical generalization of Freyd's Special Adjoint Functor Theorem (SAFT) -- see, for example, \cite[V.8, Theorem 2]{ML}. However, in view of the assumptions for Theorem \ref{LAFT}, one may regard this as a special 
$\infty$-categorical SAFT. 

Presentable $\infty$-categories, and more generally, localizations of presheaf $\infty$-catego-\-ries, are examples of $\infty$-categories which admit essentially small colimit-dense subcategories. Thus, we obtain the following result which recovers \cite[Corollary 5.5.2.9(1)]{HTT}.

\begin{coro}[LAFT] \label{LAFT2}
Let $\C$ be a presentable $\infty$-category and $\D$ a locally small $\infty$-category. Then a functor 
$F \colon \C \to \D$ is a left adjoint if and only if it preserves small colimits.
\end{coro}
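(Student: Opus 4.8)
The plan is to deduce Corollary \ref{LAFT2} directly from Theorem \ref{LAFT}, so that the entire content of the corollary reduces to checking that every presentable $\infty$-category $\C$ satisfies the hypotheses of that theorem. Theorem \ref{LAFT} already establishes the desired equivalence — being a left adjoint versus preserving small colimits — for any locally small cocomplete $\infty$-category $\C$ equipped with an essentially small colimit-dense subcategory, with $\D$ locally small. Since $\D$ is assumed locally small in the corollary as well, nothing needs to be verified on the target side, and the only real work is to produce the required structure inside a presentable $\C$.

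First I would recall the two easy hypotheses. By definition a presentable $\infty$-category is accessible and admits all small colimits, so it is cocomplete; and a presentable $\infty$-category is locally small (see \cite[Chapter 5]{HTT}). This takes care of local smallness and cocompleteness of $\C$.

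The key step is to exhibit an essentially small colimit-dense subcategory of $\C$. For this I would invoke the theory of $\kappa$-accessible $\infty$-categories from \cite{HTT}: since $\C$ is presentable, it is $\kappa$-accessible for some regular cardinal $\kappa$, so the full subcategory $\C_0 := \C^{\kappa} \subset \C$ spanned by the $\kappa$-compact objects is essentially small, and moreover every object of $\C$ is the colimit of a ($\kappa$-filtered) diagram $K \to \C_0 \subset \C$ indexed by a small simplicial set $K$. This is precisely the statement that $\C_0$ is an essentially small colimit-dense subcategory in the sense of the definition preceding Theorem \ref{LAFT}. With this subcategory in hand, all hypotheses of Theorem \ref{LAFT} are met, and the theorem applies verbatim to give the characterization of left adjoints out of $\C$.

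The only point demanding any care — and the closest thing here to an obstacle — is marshalling the correct accessibility facts from \cite{HTT} and matching them against the precise definition of colimit-density: one must confirm that the $\kappa$-filtered presentation of an arbitrary object really is a colimit of a small diagram valued in $\C_0$, rather than merely a formal ind-presentation. Once the $\kappa$-accessible presentation $\C \simeq \mathrm{Ind}_{\kappa}(\C_0)$ is unwound, this identification is immediate, and everything else in the derivation is formal.
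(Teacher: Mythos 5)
Your proposal is correct and follows essentially the same route as the paper: the paper deduces Corollary \ref{LAFT2} from Theorem \ref{LAFT} by simply observing that presentable $\infty$-categories (and more generally localizations of presheaf $\infty$-categories) admit essentially small colimit-dense subcategories. Your verification of the hypotheses --- local smallness, cocompleteness, and colimit-density of the full subcategory $\C^{\kappa}$ of $\kappa$-compact objects via the $\kappa$-accessible presentation --- just makes explicit the details the paper leaves implicit.
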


It is well known that presentable $\infty$-categories are complete \cite[Corollary 5.5.2.4]{HTT}. The following corollary of Theorem \ref{LAFT} shows existence of limits under more general assumptions. This generalizes to $\infty$-categories an analogous result for ordinary categories shown in \cite{AHR}.

\begin{coro}
Let $\C$ be a locally small cocomplete $\infty$-category which has an essentially small colimit-dense subcategory. Then $\C$ is complete.
\end{coro}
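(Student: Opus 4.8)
The plan is to derive completeness from Theorem \ref{LAFT} by realizing each limit functor as a right adjoint. Recall that $\C$ admits all $K$-indexed limits, for a fixed small simplicial set $K$, precisely when the constant diagram functor $\mathrm{const} \colon \C \to \mathrm{Fun}(K,\C)$ admits a right adjoint, in which case that right adjoint is exactly the $K$-indexed limit functor (this is the standard reformulation of the existence of limits, cf.\ \cite{HTT}). Since for a functor ``having a right adjoint'' is by definition the same as ``being a left adjoint'', the goal reduces to showing that $\mathrm{const}$ is a left adjoint for every small $K$. This is exactly a statement of the form handled by Theorem \ref{LAFT}, applied with source $\C$ and target $\mathrm{Fun}(K,\C)$, so the whole argument is organized around verifying the hypotheses of that theorem.

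First I would check the hypotheses on the source: by assumption $\C$ is locally small, cocomplete, and has an essentially small colimit-dense subcategory, which are precisely the standing hypotheses of Theorem \ref{LAFT}. Next I would verify that $\mathrm{const}$ preserves small colimits. Since $\C$ is cocomplete, colimits in $\mathrm{Fun}(K,\C)$ are computed pointwise, and for any small diagram $d \colon J \to \C$ and any vertex $k \in K$, evaluation at $k$ sends both $\mathrm{const}(\colim_J d)$ and $\colim_J(\mathrm{const}\circ d)$ to $\colim_J d$. Hence $\mathrm{const}$ carries small colimits to pointwise colimits and is therefore cocontinuous.

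The one genuinely nontrivial point, and the step I expect to be the main obstacle, is verifying that the target $\mathrm{Fun}(K,\C)$ is locally small, as Theorem \ref{LAFT} requires of its target. Here one expresses the mapping space $\mathrm{map}_{\mathrm{Fun}(K,\C)}(F,G)$ between two diagrams as a small limit (an end over the twisted arrows of $K$) of mapping spaces $\mathrm{map}_{\C}(F(k),G(k'))$ of $\C$. Since $K$ is small, the indexing category of this end is small, and since $\C$ is locally small each of these mapping spaces is essentially small; as the class of essentially small spaces is closed under small limits in the $\infty$-category of spaces, the end is again essentially small, so $\mathrm{Fun}(K,\C)$ is locally small. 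Assembling these three verifications and invoking Theorem \ref{LAFT} produces a right adjoint to $\mathrm{const}$ for every small simplicial set $K$, i.e.\ $\C$ admits all $K$-indexed limits, and therefore $\C$ is complete.
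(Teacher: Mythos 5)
Your proposal is correct and follows essentially the same route as the paper: apply Theorem \ref{LAFT} to the constant-diagram functor $\C \to \C^K$, using that colimits in $\C^K$ are computed pointwise (so the constant functor is cocontinuous) and that $\C^K$ is locally small, to obtain a right adjoint computing $K$-indexed limits. The only difference is cosmetic: where you sketch the local smallness of $\C^K$ via an end over twisted arrows, the paper simply cites \cite[Example 5.4.1.8]{HTT}, and it cites \cite[Lemma 4.2.4.3]{HTT} or \cite[Proposition 6.2.9]{Cis2} for the standard equivalence between a right adjoint to the constant functor and the existence of $K$-indexed limits.
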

\begin{proof}
Let $K$ be a small simplicial set and consider the constant $K$-diagram functor
$c\colon \C \to \C^K.$	
Since colimits in $\C^K$ are computed pointwise \cite[Corollary 5.1.2.3]{HTT}, \cite[Corollary 6.2.10]{Cis2}, the constant functor preserves small colimits. Moreover, $\C^K$ is again locally small (see \cite[Example 5.4.1.8]{HTT}). Thus, by Theorem \ref{LAFT}, the constant functor admits a right adjoint. By \cite[Proposition 6.2.9]{Cis2} or \cite[Lemma 4.2.4.3]{HTT}, this implies that $\C$ has limits indexed by $K$.
\end{proof}

\subsection{Proof of Theorem \ref{RAFT}} We first prove (1). Suppose that $G$ is accessible and preserves small limits. Note that the presentable $\infty$-category $\D$ is also complete by \cite[Corollary 5.5.2.4]{HTT}. By Theorem \ref{GAFT}, it suffices to show that for every object $c \in \C$, the $\infty$-category $G_{c/}$ has a small weakly initial set. By assumption, we may suppose that the object $c \in \C$ is $\kappa$-compact 
where $\kappa$ is large enough so that $\D$ is $\kappa$-presentable and $G$ preserves $\kappa$-filtered colimits. The full subcategory $\D^{\kappa} \subset \D$
of $\kappa$-compact objects is essentially small so that there is an equivalent small subcategory $\widetilde{\D} \simeq \D^{\kappa} \subset \D$. Since $\C$ is locally small, we may consider a small set of objects,  
$$S = \{c \to G(d) \ | \  d \in \widetilde{\D}\} \subset G_{c/},$$ 
which consists of one morphism $(c \to G(d))$ from each homotopy class. We claim that this defines a weakly initial set in $G_{c/}$. To see this, consider an object $u \colon c \to G(x)$ 
in $G_{c/}$ where $x \in \D$. 
Since $\D$ is $\kappa$-presentable, there is a $\kappa$-filtered diagram of $\kappa$-compact objects $\Phi \colon K \to \widetilde{\D} \hookrightarrow \D$ whose colimit is $x \in \D$. In addition, since the functor $G$ preserves $\kappa$-filtered colimits, $\colim_K (G \circ \Phi)$ exists in $\C$ and there is an equivalence $G(x) \simeq \colim_{K} (G \circ \Phi)$. Moreover, since $c$ is $\kappa$-compact in $\C$, we obtain a factorization in $\C$ as follows 
\[
\begin{tikzcd}[column sep=small]
& c \arrow[dr, "u"] \arrow{dl}[above]{v} \\
G(\Phi(i)) \arrow[rr,"G(f_i)"] && G(x),
\end{tikzcd}
\]
where $f_i \colon \Phi(i) \to x$ is the canonical morphism to the colimit of $\Phi$ and $v \in S$. This shows the claim, and therefore $G$ is a right adjoint by Theorem \ref{GAFT}.

For (2), suppose that $G$ is a right adjoint. Then it preserves limits by \cite[Proposition 5.2.3.5]{HTT}. Let $F$ be a left adjoint of $G$ and suppose that $\C$ is 
$\kappa$-accessible. We may choose a regular cardinal $\tau \geq \kappa$ such that for each $\kappa$-compact object $c \in \C$, $F(c) \in \D$ is $\tau$-compact. 

We claim that $G$ preserves $\tau$-filtered colimits. To see this, consider a $\tau$-filtered diagram $\Phi \colon K \to \D$ and the canonical morphism in $\C$
\begin{equation} \label{comparison-map}
\mathrm{colim}_K (G \circ \Phi) \longrightarrow G(\mathrm{colim}_K \Phi).
\end{equation}
For a $\kappa$-compact object $c \in \C$, we consider the following homotopy commutative diagram of mapping spaces:
\[
\begin{tikzcd}
\mathrm{map}_{\C}(c, \mathrm{colim}_K (G \circ \Phi)) \arrow[r] & \mathrm{map}_{\C}(c, G(\mathrm{colim}_K \Phi))  \\
& \mathrm{map}_{\D}(F(c), \mathrm{colim}_K \Phi) \arrow[u, "\simeq"] \\
\mathrm{colim}_K \mathrm{map}_{\C}(c, G(\Phi(-))) \arrow[r, "\simeq"] \arrow[uu, "\simeq"] & 
\mathrm{colim}_K \mathrm{map}_{\D}(F(c), \Phi(-)) \arrow[u, "\simeq"] 
\end{tikzcd}
\]
where the indicated equivalences follow either from the adjunction or from the $\tau$-compactness of $c$ and $F(c)$. Then the top map is also a weak equivalence for each $\kappa$-compact $c \in \C$. Since $\C$ is $\kappa$-accessible, it follows that \eqref{comparison-map} is an equivalence, as required. 
\qed

\subsection{Proof of Theorem \ref{LAFT}}

A left adjoint preserves colimits by \cite[Proposition 5.2.3.5]{HTT}. Conversely, suppose that $F \colon \C \to \D$ preserves small colimits. After passing to the opposite $\infty$-categories, it suffices to show that the continuous functor $F^{\op} \colon \C^{\op} \to \D^{\op}$ admits a left adjoint. Since $\C^{\op}$ is complete and $F^{\op}$ is continuous, it suffices by Theorem \ref{GAFT} to show that $F^\op$ satisfies the solution set condition. Using the canonical isomorphism of simplicial sets
$$F_{d/}^{\op} \cong (F_{/d})^{\op},$$
this is equivalent to showing that $F_{/d}$ admits a weakly terminal set. 

Let $\iota:\C_0 \subset \C$ be an essentially small colimit-dense subcategory of $\C$. Consider the following diagram of pullback  squares
\begin{equation}\label{diagramLAFT}
\begin{tikzcd}
(F \circ \iota)_{/d} \arrow[d] \arrow[r, "j"] & F_{/d} \arrow[d] \arrow[r] & \D_{/d} \arrow[d] \\
\C_0 \arrow[r,hookrightarrow,"\iota"] & \C \arrow[r,"F"] & \D.
\end{tikzcd}
\end{equation}
Since $\C$ is cocomplete and $F$ preserves all small colimits, the $\infty$-category $F_{/d}$ is also cocomplete -- by the dual of Lemma \ref{slicecomplete2}. 
Moreover, since $\D$ is locally small and $\C_0$ is essentially small, the $\infty$-category $(F \circ \iota)_{/d}$ is essentially small (see \cite[Proposition 5.4.1.4]{HTT}).
In particular, we can form the colimit of the diagram $j \colon (F \circ \iota)_{/d} \to F_{/d}$. We denote the colimit-object by $(e,f:F(e) \to d) \in F_{/d}$ and claim that this is weakly terminal in $F_{/d}$. To see this, consider an object $(c,g:F(c) \to d) \in F_{/d}$. 
We can write $c \in \C$ as a colimit of a diagram 
\[
\begin{tikzcd}[column sep=small]
K \arrow[r,"\phi"] & \C_0 \arrow[r,hookrightarrow,"\iota"] & \C
\end{tikzcd}
\]
where $K$ is a small simplicial set. The functor $F$ is cocontinuous, so the colimit cocone $\overline \phi$ in $\C$ maps to a colimit cocone in $\D$ with colimit-object $F(c)$,
\[
\begin{tikzcd}[column sep=small]
K^{\triangleright} \arrow[r,"\bar{\phi}"] & \C \arrow[r,"F"] & \D.
\end{tikzcd}
\]
We can extend the morphism $g:F(c) \to d$ in $\D$ to a morphism of cocones by choosing an extension in the following diagram
\[
\begin{tikzcd}
K \star \Delta^0 \cup _{\emptyset \star \Delta^0} \emptyset \star \Delta^1 \arrow[rr, "F \circ \bar{\phi} \cup g"] \arrow[d] && \D \\
K \star \Delta^1 \arrow[urr,dashed,bend right, "\psi"]
\end{tikzcd}
\]
which exists since the vertical map is inner anodyne. 
Therefore we obtain a commutative diagram of the form
\[
\begin{tikzcd}[column sep=small]
&&& \D_{/d} \arrow[d] \\
K \arrow[urrr,bend left, "\psi_{| K \star \Delta^{\{1\}}}"] \arrow[r,"\phi"] & \C_0 \arrow[r,hookrightarrow,"\iota"] & \C \arrow[r,"F"] & \D,
\end{tikzcd}
\]
which combined with \eqref{diagramLAFT} extends to a commutative diagram as follows,
\[
\begin{tikzcd}[column sep=small]
&(F \circ \iota)_{/d} \arrow[d] \arrow[r, "j"] & F_{/d} \arrow[d] \arrow[r] & \D_{/d} \arrow[d] \\
K \arrow[r,"\phi"] \arrow[ur,"\phi'"] & \C_0 \arrow[r,hookrightarrow,"\iota"] & \C \arrow[r,"F"] & \D.
\end{tikzcd}
\]
Similarly, using the adjoint of $\psi$, we obtain an extension of the composite diagram $j \circ \phi'$ to a cocone $K^{\triangleright} \to F_{/d}$ with cone-object $(c, g \colon F(c) \to d)$. 
By \cite[Lemma 5.4.5.5]{HTT}, this is a colimit cocone since it is defined by the colimit cocone $\bar{\phi}$ in $\C$, the induced colimit cocone $F \circ \bar{\phi}$ in $\D$, and the cocone in $\D_{/d}$ induced by $\psi$ which is also a colimit cocone by \cite[Proposition 1.2.13.8]{HTT}. Therefore we obtain a canonical morphism between colimit-objects in 
$F_{/d}$,
$$(c,g) \simeq \mathrm{colim}_K (j\circ \phi') \longrightarrow 
\mathrm{colim}_{(F \circ \iota)_{/d}} j \simeq (e, f).$$
This shows that $(e, f)$ is weakly terminal in $F_{/d}$, as required. 
\qed

\begin{remark}
The weakly terminal object $(e, f) \in F_{/d}$ that is defined in the proof of Theorem \ref{LAFT} can actually be shown to be terminal in $F_{/d}$ when $\C$ is $\kappa$-presentable and $\C_0$ is the full subcategory of $\kappa$-compact objects. Hence it determines the value of the right adjoint at $d \in \D$. 
\end{remark}

\section{Brown representability for $\infty$-categories}

\subsection{Preliminaries} Let $\C$ be a locally small $\infty$-category. We write $[x, y]$ to denote the (small) set of morphisms from $x$ to $y$ in the homotopy category $\h \C$. A functor $F: \C^{op} \to \Set \ (:= \Set_{\mathbb{U}})$ is \emph{representable} if it is naturally  
isomorphic to a functor $[-, x] : \C^{op} \to \Set$ for some object $x \in \C$. Every representable functor satisfies the conditions (B1)--(B2) below.

\begin{defn} \label{Brown-rep-prop}
Let $\C$ be a locally small cocomplete $\infty$-category. We say that $\C$ \emph{satisfies Brown representability} if for any given functor $F \colon \C^{\op} \to \Set$, $F$ is representable  if (and only if) the following conditions are satisfied. 
\begin{enumerate}
\item[(B1).] For any small coproduct $\coprod_{i\in I} x_i$ in $\C$, the canonical morphism
$$
F\left(\coprod_{i\in I} x_i\right)\longrightarrow \prod_{i\in I} F(x_i)
$$
is an isomorphism.
\item[(B2).] For every pushout diagram in $\C$
\begin{equation*} 
\xymatrix{
x \ar[r] \ar[d] & y \ar[d] \\ 
z \ar[r] & w}
\end{equation*}
the canonical morphism
$$
F(w)\longrightarrow F(y)\times_{F(x)}F(z)
$$
is an epimorphism.
\end{enumerate}
\end{defn}

\begin{remark}
If $I$ is the empty set, then property (B1) says that $F$ sends the initial object of $\C$ to a set with one element. Note that we have not assumed that $\C$ is pointed. If this is the case, then every functor $F$ satisfying (B1) is canonically pointed, i.e. it factors through the category of pointed sets 
$\Set_*$.
\end{remark}

The purpose of a Brown representablity theorem for (cocomplete) $\infty$-categories is to identify a class of $\infty$-categories that satisfy Brown representability. This property is intimately connected with adjoint functor theorems. Recall that a \emph{weak (co)limit} in a category is a (co)cone on a diagram which satisfies the existence but not the uniqueness property of a (co)limit (co)cone. We then have the following proposition. 

\begin{prop} \label{Brown-implies-adjoints}
Let $\C$ and $\D$ be locally small $\infty$-categories and let $F \colon \C \to \D$ be a functor. Suppose that $\C$ is cocomplete and that it satisfies 
Brown representability. Then the induced functor $\h F \colon \h \C \to \h \D$ admits a right adjoint if and only if $F$ satisfies the following properties.
\begin{itemize}
\item[(B1$'$).] $F$ sends small coproducts in $\C$ to coproducts in $\h \D$.  
\item[(B2$'$).] $F$ sends pushout squares in $\C$
\begin{equation*}
\xymatrix{
x \ar[r] \ar[d] & y \ar[d] \\ 
z \ar[r] & w}
\end{equation*}
to weak pushout squares in $\h \D$
$$
\xymatrix{
F(x) \ar[r] \ar[d] & F(z) \ar[d] \\ 
F(y) \ar[r] & F(w).
}
$$ 
\end{itemize}
\end{prop}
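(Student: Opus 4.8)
The plan is to reduce the statement to the classical representability criterion for the existence of an adjoint. Recall that a functor $L \colon \mathcal{A} \to \mathcal{B}$ between locally small ordinary categories admits a right adjoint if and only if, for every object $b \in \mathcal{B}$, the functor $\mathcal{B}(L(-), b) \colon \mathcal{A}^{\op} \to \Set$ is representable; moreover, the representing objects then assemble uniquely into a right adjoint functor. Applying this to $L = \h F$ (note that $\h\C$ and $\h\D$ are locally small ordinary categories since $\C$ and $\D$ are locally small), the task becomes: for each $d \in \D$, decide when the functor
\[
F_d := [F(-), d] \colon \C^{\op} \to \Set, \qquad c \mapsto [F(c), d],
\]
is representable. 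This $F_d$ is well-defined and takes values in small sets because $\D$ is locally small, and since $\Set$ is a $1$-category it factors through $(\h\C)^{\op}$, so it is the same datum as a functor $(\h\C)^{\op} \to \Set$.

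For the direction assuming (B1$'$) and (B2$'$), I would verify that each $F_d$ satisfies conditions (B1) and (B2) of Definition \ref{Brown-rep-prop} and then invoke the hypothesis that $\C$ satisfies Brown representability. For (B1): given a small coproduct $\coprod_{i} x_i$ in $\C$, condition (B1$'$) identifies $F(\coprod_i x_i)$ with the coproduct $\coprod_i F(x_i)$ in $\h\D$, whence the universal property of coproducts gives $[F(\coprod_i x_i), d] \cong \prod_i [F(x_i), d]$, and one checks this is precisely the canonical comparison map. For (B2): given a pushout square in $\C$ with terminal vertex $w$, condition (B2$'$) makes the image square a weak pushout in $\h\D$; unwinding the definition of a weak pushout shows that this says exactly that $[F(w), d] \to [F(y), d] \times_{[F(x), d]} [F(z), d]$ is surjective, which is condition (B2). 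Hence $F_d$ is representable for every $d$ by Brown representability, and the representing objects assemble into a right adjoint of $\h F$.

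For the converse, suppose $\h F$ admits a right adjoint, so that $\h F$ is a left adjoint and preserves all colimits existing in $\h\C$. Two elementary facts make the conditions visible on $\h\D$: first, a small coproduct in $\C$ remains a coproduct in $\h\C$, obtained by applying $\pi_0$ to the equivalence $\mathrm{map}_\C(\coprod_i x_i, -) \simeq \prod_i \mathrm{map}_\C(x_i, -)$; second, a pushout square in $\C$ becomes a weak pushout square in $\h\C$, because $\pi_0$ of a homotopy pullback of spaces surjects onto the pullback of the $\pi_0$'s. Since $\h F$ preserves coproducts, (B1$'$) follows at once. For (B2$'$) I would record the small lemma that a left adjoint between ordinary categories preserves weak colimits, in particular weak pushouts; applying it to the weak pushout square produced in $\h\C$ and using that $\C \to \h\C \xrightarrow{\h F} \h\D$ agrees with $\C \xrightarrow{F} \D \to \h\D$ yields (B2$'$).

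The genuinely formal content is concentrated in two auxiliary points rather than in any single hard step: the parametrized representability statement that the pointwise representing objects for $[F(-),d]$ organize into an actual right adjoint functor (standard, but worth a citation), and the lemma that left adjoints preserve weak pushouts (a short diagram chase through the unit/counit of the adjunction). The translation between (B1)/(B2) and (B1$'$)/(B2$'$) is essentially tautological once the representability criterion is in place, so I expect no serious obstacle beyond this bookkeeping and the care needed in passing between $\C$ and $\h\C$.
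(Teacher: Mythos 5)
Your proposal is correct and follows essentially the same route as the paper: both reduce the existence of a right adjoint of $\h F$ to the pointwise representability of $c \mapsto [F(c), d]$, translate (B1$'$)/(B2$'$) into (B1)/(B2) to invoke Brown representability, and for the converse use that $\h F$, being a left adjoint, preserves coproducts and weak pushouts, together with the observation that coproducts and pushouts in $\C$ descend to coproducts and weak pushouts in $\h \C$ (the latter via the surjection from $\pi_0$ of the homotopy pullback of mapping spaces onto the pullback of the $[-,-]$ sets). The two auxiliary points you flag (parametrized representability and preservation of weak pushouts by left adjoints) are indeed the only standard facts the paper also uses, stated there without proof.
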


\begin{proof}
The functor $\h F$ admits a right adjoint if and only if for every $d \in \D$ the functor 
$$Y_d \colon \h \C^{\op} \to \Set, \   \ c \mapsto [F(c), d]$$
is representable. Suppose that $F$ satisfies (B1$'$)-(B2$'$). Since $F$ satisfies (B1$'$), it follows that the functor 
$\C^{\op} \to \Set$, $c \mapsto [F(c), d]$, satisfies (B1). Moreover, it satisfies (B2) because $F$ satisfies (B2$'$). Since $\C$ satisfies Brown representability by assumption, 
it follows that $Y_d$ is representable for any $d \in \D$, and therefore 
$\h F$ admits a right adjoint.

Conversely, suppose that $\h F$ is a left adjoint. Then it preserves coproducts and weak pushouts. Coproducts in $\C$ define coproducts in $\h\C$, hence (B1$'$) is satisfied. 
A pushout diagram in $\C$
\begin{equation*}
\xymatrix{
x \ar[r] \ar[d] & y \ar[d] \\ 
z \ar[r] & w}
\end{equation*} 
defines a weak pushout in $\h\C$. To see this, note that for any $c \in \C$, we have
$$[w, c] \cong \pi_0(\mathrm{map}_{\C}(y, c) \times_{\mathrm{map}_{\C}(x, c)} \mathrm{map}_{\C}(z, c)),$$
where the pullback is formed in the $\infty$-category of spaces, and therefore the canonical map
$$[w, c] \to [y, c] \times_{[x, c]} [z, c]$$
is surjective. Hence (B2$'$) is also satisfied. 
\end{proof}

\begin{coro} \label{Brown-implies-complete} 
Let $\C$ be a locally small cocomplete $\infty$-category which satisfies Brown representability. Then $\C$ is complete.
\end{coro}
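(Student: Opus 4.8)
The plan is to deduce completeness of $\C$ from Brown representability by applying Proposition \ref{Brown-implies-adjoints} to a suitable diagonal functor, exactly in the spirit of the earlier corollary that derived completeness from Theorem \ref{LAFT}. First I would fix a small simplicial set $K$ and consider the constant-diagram functor $c \colon \C \to \C^K$. Since $\C$ is cocomplete and colimits in $\C^K$ are computed pointwise (\cite[Corollary 5.1.2.3]{HTT}), the constant functor preserves all small colimits; in particular it sends small coproducts to coproducts and pushout squares to pushout squares in $\C^K$, hence a fortiori to weak pushouts in $\h(\C^K)$. Thus $c$ satisfies conditions (B1$'$) and (B2$'$) of Proposition \ref{Brown-implies-adjoints}.

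Next, since $\C$ is locally small, so is $\C^K$ (see \cite[Example 5.4.1.8]{HTT}), so both source and target are locally small as the proposition requires, and $\C$ satisfies Brown representability by hypothesis. Proposition \ref{Brown-implies-adjoints} then yields that the induced functor $\h c \colon \h\C \to \h(\C^K)$ admits a right adjoint. The final step is to translate this $1$-categorical adjunction back into the existence of $K$-indexed limits in $\C$: a right adjoint to the constant-diagram functor computes limits, so one concludes that $\C$ admits limits indexed by $K$. Since $K$ was an arbitrary small simplicial set, $\C$ is complete.

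The one genuine subtlety, and the step I expect to need the most care, is the passage from a right adjoint to $\h c$ at the level of homotopy categories to actual limits in the $\infty$-category $\C$. Unlike the proof of the earlier completeness corollary — which invoked Theorem \ref{LAFT} to obtain a right adjoint to the constant functor \emph{at the $\infty$-categorical level} and then cited \cite[Proposition 6.2.9]{Cis2} or \cite[Lemma 4.2.4.3]{HTT} — here Brown representability only gives an adjoint between homotopy categories. I would therefore either invoke Theorem \ref{finitehomotopyadjunction} (or Remark \ref{homotopy-adjunction-rem}) to upgrade the homotopy adjunction of $\h c$ to an honest $\infty$-categorical right adjoint of $c$, after checking its hypotheses, and then apply \cite[Lemma 4.2.4.3]{HTT}; or, more directly, observe that representability of each $Y_d$ in the proof of Proposition \ref{Brown-implies-adjoints} produces, for a fixed diagram $p \colon K \to \C$, an object together with a universal cone realizing the limit. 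Either route works, but one must verify that the object representing the relevant functor genuinely assembles into a limit cone in $\C$ rather than merely a cone in $\h\C$; this is where the argument's real content lies.
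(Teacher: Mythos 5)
Your proposal is correct and follows essentially the same route as the paper: apply Proposition \ref{Brown-implies-adjoints} to the constant-diagram functor $c \colon \C \to \C^K$ (which satisfies (B1$'$)--(B2$'$) since colimits in $\C^K$ are pointwise and $\C^K$ is locally small), and then upgrade the resulting right adjoint of $\h(c)$ to an $\infty$-categorical right adjoint of $c$ via the dual of Theorem \ref{finitehomotopyadjunction} or Remark \ref{homotopy-adjunction-rem}. The ``subtlety'' you flag is exactly the step the paper resolves by that theorem, and your hypothesis-check (cocompleteness of $\C$ and cocontinuity of $c$) is the right one.
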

\begin{proof}
Let $K$ be a small simplicial set and let $c \colon \C \to \C^K$ denote the constant $K$-diagram functor. We need to show that $c$ admits a right adjoint for any $K$. Since colimits in $\C^K$ are computed pointwise, it follows that $c$ preserves small colimits (see \cite[Corollary 5.1.2.3]{HTT} or \cite[Corollary 6.2.10]{Cis2}). As a consequence, the functor $c$ satisfies (B1$'$)--(B2$'$) of Proposition \ref{Brown-implies-adjoints}. Moreover, $\C^K$ is again locally small (see \cite[Example 5.4.1.8]{HTT}). Since $\C$ satisfies Brown representability, it follows that the induced functor $\h(c) \colon \h \C \to \h(\C^K)$ admits a right adjoint. By applying Theorem \ref{finitehomotopyadjunction} or Remark \ref{homotopy-adjunction-rem}, we conclude that the functor $c \colon \C \to \C^K$ admits a right adjoint. 
\end{proof}

Similarly to the proof of Corollary \ref{Brown-implies-complete}, we may more generally combine Proposition \ref{Brown-implies-adjoints} and Theorem \ref{finitehomotopyadjunction} (or Remark \ref{homotopy-adjunction-rem}) to obtain the following:

\begin{coro}
Let $\C$ and $\D$ be locally small $\infty$-categories. Suppose that $\C$ is cocomplete and satisfies Brown representability. Then a functor $F \colon \C \to \D$ admits a right adjoint if and only if it preserves small colimits.
\end{coro}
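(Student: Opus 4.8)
The plan is to mirror the proof of Corollary~\ref{Brown-implies-complete}, combining Proposition~\ref{Brown-implies-adjoints} with Theorem~\ref{finitehomotopyadjunction}. The forward implication is immediate: if $F$ admits a right adjoint, then $F$ is a left adjoint and hence preserves all small colimits by \cite[Proposition 5.2.3.5]{HTT}. So the content lies entirely in the converse, which I would organize as first producing a homotopy adjunction via Brown representability and then lifting it to an adjunction of $\infty$-categories.

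For the converse, suppose $F$ preserves small colimits. First I would check that $F$ satisfies conditions (B1$'$) and (B2$'$) of Proposition~\ref{Brown-implies-adjoints}. Since $F$ preserves small coproducts, it sends a small coproduct in $\C$ to a coproduct in $\D$, which is in particular a coproduct in $\h\D$; this gives (B1$'$). Likewise $F$ sends a pushout square in $\C$ to a pushout square in $\D$, and any pushout square in an $\infty$-category induces a weak pushout square in its homotopy category (by the same $\pi_0$-of-the-mapping-space argument used in the proof of Proposition~\ref{Brown-implies-adjoints}); this gives (B2$'$). As $\C$ is cocomplete and satisfies Brown representability by assumption, Proposition~\ref{Brown-implies-adjoints} then shows that $\h F \colon \h\C \to \h\D$ admits a right adjoint.

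It remains to lift this homotopy adjunction to an adjunction of $\infty$-categories, and here I would invoke Theorem~\ref{finitehomotopyadjunction} (or Remark~\ref{homotopy-adjunction-rem}) applied to the opposite functor. Since the theorem is phrased for left adjoints, I pass to opposites: $F$ admits a right adjoint if and only if $F^{\op}\colon \C^{\op}\to\D^{\op}$ admits a left adjoint. The source $\C^{\op}$ is finitely complete because $\C$ is cocomplete, and $F^{\op}$ preserves finite limits because $F$ preserves finite colimits. Hence Theorem~\ref{finitehomotopyadjunction} applies and shows that $F^{\op}$ admits a left adjoint if and only if $\h(F^{\op})\cong(\h F)^{\op}$ does; and the latter holds precisely because we have just shown that $\h F$ admits a right adjoint. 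Translating back along the equivalence $\h(F^{\op})=(\h F)^{\op}$, we conclude that $F$ admits a right adjoint.

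The main obstacle—really the only point requiring care—is the bookkeeping of the dualization, since Theorem~\ref{finitehomotopyadjunction} is stated for finitely continuous functors admitting \emph{left} adjoints, whereas the corollary concerns cocontinuous functors admitting \emph{right} adjoints. One must track that the finiteness hypothesis on the source (finite limits in $\C^{\op}$, i.e.\ finite colimits in $\C$) is supplied by the cocompleteness of $\C$, that $F^{\op}$ inherits preservation of finite limits from $F$, and that forming the homotopy category commutes with passing to opposites so that $\h(F^{\op})=(\h F)^{\op}$. Once this is in place, the two cited results assemble the argument with no further computation.
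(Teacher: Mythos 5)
Your proof is correct and follows essentially the same route as the paper, which obtains this corollary by combining Proposition \ref{Brown-implies-adjoints} with Theorem \ref{finitehomotopyadjunction} (or Remark \ref{homotopy-adjunction-rem}) exactly as in the proof of Corollary \ref{Brown-implies-complete}. Your explicit bookkeeping of the passage to opposite categories---checking that $\C^{\op}$ is finitely complete, that $F^{\op}$ preserves finite limits, and that $\h(F^{\op}) = (\h F)^{\op}$---merely spells out the dualization the paper leaves implicit.
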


\begin{remark}
A functor $F: \C^{\op} \to \Set$ is representable if the induced functor $\h F^{\op} \colon \h \C \to \Set^{\op}$ is a left adjoint -- the converse also holds if $\C$ satisfies Brown representability. Thus, the Brown representability property of Definition \ref{Brown-rep-prop} is a special case of Proposition \ref{Brown-implies-adjoints}.
\end{remark}

\begin{example} \label{ordinary-cats-Brown}
Suppose that $\C$ is an ordinary (locally small) cocomplete category which satisfies Brown representability. Let $F \colon \C \to \D$ be a functor which satisfies (B1$'$)-(B2$'$), where $\D$ is an ordinary category. Then $F$ admits a right adjoint by Proposition \ref{Brown-implies-adjoints}. In particular, $F$ actually preserves all colimits.
\end{example}

\begin{example} \label{comparison}
This example is an additional observation on the comparison between Theorem \ref{finitehomotopyadjunction} and Proposition \ref{Brown-implies-adjoints}. Let $\C$ be an $\infty$-category which admits finite colimits. The canonical functor $\gamma \colon \C \to \h \C$ satisfies (B1$'$)-(B2$'$) and $\h(\gamma)$ is obviously an equivalence, but $\gamma$ does not admit a right adjoint in general -- since it does not preserve pushouts in general.  
\end{example}

Examples of $\infty$-categories which satisfy Brown representability are discussed in the following subsections. We note first the following elementary proposition that will allow us to generate new examples from old ones. 

\begin{prop} \label{local-pres}
Let $\C$ be a locally small cocomplete $\infty$-category that satisfies Brown representability. 
If $\D$ is a localization of $\C$, then $\D$ also satisfies Brown representability.
\end{prop}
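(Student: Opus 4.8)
The plan is to test the Brown representability of $\D$ by pulling functors back to $\C$ along the reflector. I read ``localization'' in the sense of \cite[5.2.7]{HTT}: a functor $L \colon \C \to \D$ admitting a fully faithful right adjoint $R \colon \D \to \C$, with unit $\eta \colon \mathrm{id}_\C \to R L$. Recall then that $L R \simeq \mathrm{id}_\D$ and that $L(\eta_x)$ is an equivalence for every $x \in \C$. Since $R$ is fully faithful and $\C$ is locally small, $\D$ is locally small; and since $L$ is a left adjoint it preserves colimits, so $\D$ is cocomplete (colimits in $\D$ being computed as $L$ applied to the colimits of the $R$-images in $\C$). Thus the assertion that $\D$ satisfies Brown representability is well-posed. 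Let $F \colon \D^{\op} \to \Set$ satisfy (B1) and (B2); as representable functors always satisfy (B1)--(B2), it suffices to prove that $F$ is representable.

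First I would form the pullback $\widetilde F := F \circ L^{\op} \colon \C^{\op} \to \Set$. Because $L$ preserves small coproducts and pushouts, the image under $L$ of a coproduct (resp.\ pushout square) in $\C$ is a coproduct (resp.\ pushout square) in $\D$; comparing the canonical maps, the conditions (B1) and (B2) for $F$ in $\D$ translate directly into (B1) and (B2) for $\widetilde F$ in $\C$. Since $\C$ satisfies Brown representability, $\widetilde F$ is representable: there is $c \in \C$ and a natural isomorphism $\widetilde F \cong [-, c]_{\h\C}$, i.e.\ $F(L x) \cong [x, c]_{\h\C}$ naturally in $x \in \C$.

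It then remains to descend this to a representation of $F$ on $\D$. Restricting along $R$ and using $L R \simeq \mathrm{id}_\D$ gives $F(d) \cong [R d, c]_{\h\C}$ naturally in $d \in \D$, so I must show that $d \mapsto [Rd, c]_{\h\C}$ is representable in $\h\D$. The key point -- and the main obstacle, since Brown representability only yields a representing object at the level of the homotopy category -- is that $c$ may be taken to be local. I would argue entirely inside the reflective subcategory $\h\D \subseteq \h\C$ (with reflector $\h L \dashv \h R$). The isomorphism $\widetilde F \cong [-, c]_{\h\C}$ exhibits $[-, c]_{\h\C}$ as inverting the unit maps $\eta_x$, because $\widetilde F(\eta_x) = F(\h L(\eta_x))$ and $\h L(\eta_x)$ is an isomorphism. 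Taking $x = c$, precomposition $(\eta_c)^\ast \colon [RLc, c]_{\h\C} \to [c, c]_{\h\C}$ is a bijection; let $g \colon RLc \to c$ be the unique class with $g \circ \eta_c = \mathrm{id}_c$. Since $RLc$ is local, the universal property of the reflection makes $(\eta_c)^\ast \colon [RLc, RLc]_{\h\C} \to [c, RLc]_{\h\C}$ a bijection; as both $\eta_c \circ g$ and $\mathrm{id}_{RLc}$ restrict along $\eta_c$ to $\eta_c$, they coincide. Hence $\eta_c$ is an isomorphism in $\h\C$, so $c$ is equivalent to the local object $RLc$.

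Finally, setting $d_0 := L c$ and using full faithfulness of $\h R$, I would conclude
\[
F(d) \cong [R d, c]_{\h\C} \cong [R d, R d_0]_{\h\C} \cong [d, d_0]_{\h\D},
\]
naturally in $d$, so that $F$ is represented by $d_0 \in \D$. The only delicate step is the middle paragraph's descent: I expect that checking (B1)--(B2) pass to $\widetilde F$ is routine, whereas the promotion of the homotopy-level representing object $c$ to a genuinely local object (via the retract argument against the unit $\eta_c$) is where the real content lies.
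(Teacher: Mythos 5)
Your proof is correct and takes essentially the same route as the paper's: pull $F$ back along $L^{\op}$ to get $\widetilde F$ on $\C^{\op}$, check that (B1)--(B2) are inherited, represent $\widetilde F$ by some $c \in \C$, observe that $[-,c]_{\h\C}$ inverts the unit maps so that $c$ is local, and conclude that $F$ is represented by $Lc$. The only difference is one of detail: the paper simply asserts that the representing object is $L$-local because $F \circ L^{\op}$ sends $L$-equivalences to isomorphisms, whereas you correctly spell out the underlying retract argument against $\eta_c$ (using that isomorphisms in $\h\C$ are equivalences in $\C$), which is precisely the justification the paper leaves implicit.
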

\begin{proof}
We may assume that $i \colon \D \subset \C$ is a full subcategory. Let $L \colon \C \to \D$ denote the left adjoint to the inclusion. Given a functor $F \colon \D^{op} \to \Set$ that satisfies (B1)-(B2), the composite functor $\C^{op} \to \D^{op} \to \Set$ also satisfies 
(B1)-(B2). Since $\C$ satisfies Brown representability by assumption, the functor $F \circ L^{op}$ is representable by an object $x \in \C$. Moreover, this composite functor sends $L$-equivalences to isomorphisms since it factors through $\D^{\op}$. This implies that $x$ is $L$-local, that is, $x \simeq i L(x)$. Then $F$ is representable by the object $L(x)$.   
\end{proof}

\subsection{Compactly generated $\infty$-categories} Various generalizations of Brown's original representability theorem \cite{Br} have been obtained over the years in 
different contexts. In this subsection, we revisit Heller's formulation \cite{He} of Brown's original argument as it applies in the context of compactly generated $\infty$-categories.   

\begin{defn} \label{weak_gen_def}
Let $\C$ be a locally small $\infty$-category. A set of \emph{weak generators} of $\C$ is a small set of objects $\mathcal{G}$ that jointly detect equivalences, i.e. a morphism $f \colon x\to y$ in $\C$ is an equivalence if and only if the 
canonical morphism
$$
\xymatrix{[g,x] \ar[r]^{f_*} & [g,y]}
$$
is an isomorphism for every object $g \in \mathcal{G}$.
\end{defn}

Note that the existence of weak generators in $\C$ depends only on the homotopy category 
$\h \C$. This property should not be confused with the strictly stronger property which refers to a set of objects that jointly distinguish parallel arrows. A related notion  refers to a set of objects that detects whether the canonical morphism to the terminal object $x \to *$ is an equivalence. For stable $\infty$-categories, this is equivalent to a set of weak generators, but the two concepts are not equivalent in general. For example, the collection of spheres $\{S^n\}_{n \geq 0}$ is not a set of weak generators in the $\infty$-category of spaces.

\begin{defn} \label{h-compact}
Let $\C$ be a locally small $\infty$-category which admits $\omega$-indexed colimits. An object $x \in \C$ is \emph{h-compact} if for every diagram 
$F\colon \omega \to \C$ the canonical morphism
$$
\colim_{i <\omega}[x, F(i)]\to [x,\colim_{i <\omega} F(i)]
$$
is an isomorphism.
\end{defn}

This is a familiar concept and one of the many thematic variations that appear in the literature. The main point to notice about the definition is that it is given in terms of the homotopy category but with respect to colimit diagrams in $\C$ - 
rather than weak colimit diagrams in $\h \C$, or strict colimit diagrams in some 
category with weak equivalences which models $\C$.  

\begin{defn} \label{def-comp-gen}
A locally small $\infty$-category $\C$ is called \emph{compactly generated} if it admits small colimits and has a set of weak generators $\mathcal{G}$ consisting of $h$-compact objects. 
\end{defn}

\begin{example} \label{loc-pres-cat} Let $\C$ be an ordinary (locally small) category which admits small colimits. Then $\C$ is compactly generated if $\C$ has a set of finitely presentable objects which jointly detect isomorphisms in $\C$, that is, if $\C$ is locally finitely presentable.   
\end{example}

\begin{example} \label{example-spectra}
The stable $\infty$-category of spectra $\mathcal{S}p$ is compactly generated. Moreover, for every small simplicial set $K$, the $\infty$-category $\mathcal{S}p^K$ is also compactly generated. More generally, any  stable finitely presentable $\infty$-category is compactly generated. 
\end{example}

\begin{prop} \label{intermediate}
Let $\C$ be a compactly generated $\infty$-category. 
\begin{enumerate}
\item The homotopy category $\h\C$ has small coproducts and weak pushouts. 
\item Every diagram $F \colon \omega \to \h \C$ admits a weak colimit $\colim^w F \in \h \C$ such that the canonical morphism 
$$\colim_{i < \omega} [x, F(i)] \to [x, \colim^w F]$$
is an isomorphism for each $h$-compact object $x \in \C$. 
\end{enumerate}
\end{prop}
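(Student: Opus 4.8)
The plan is to deduce both assertions from the comparison between colimits computed in $\C$ and their images in $\h\C$, using only the cocompleteness of $\C$ together with the elementary behaviour of $\pi_0$ on mapping spaces. For the coproduct half of part (1), given a small family $\{x_i\}_{i\in I}$ of objects of $\h\C$ (which are objects of $\C$), I would form $\coprod_i x_i$ in $\C$; for every $c$ the universal property yields $\mathrm{map}_{\C}(\coprod_i x_i, c) \simeq \prod_i \mathrm{map}_{\C}(x_i, c)$, and since $\pi_0$ commutes with products of Kan complexes, passing to $\pi_0$ gives a bijection $[\coprod_i x_i, c] \cong \prod_i [x_i, c]$, so the image of $\coprod_i x_i$ is a genuine coproduct in $\h\C$. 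For weak pushouts I would lift a span $y \leftarrow x \to z$ in $\h\C$ to a span in $\C$ by choosing representative edges, form the pushout $w$ in $\C$, and use that $\mathrm{map}_{\C}(w, c)$ is the homotopy pullback of $\mathrm{map}_{\C}(y,c)$ and $\mathrm{map}_{\C}(z,c)$ over $\mathrm{map}_{\C}(x, c)$. Because the canonical map from $\pi_0$ of a homotopy pullback to the pullback of the $\pi_0$'s is surjective, the map $[w, c] \to [y, c]\times_{[x, c]}[z, c]$ is surjective for all $c$, which is exactly the statement that the image square is a weak pushout (compare the argument in the proof of Proposition \ref{Brown-implies-adjoints}).

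For part (2), the first step is to lift the diagram $F\colon \omega \to \h\C$ to $\tilde F \colon \omega \to \C$. Choosing representative edges for the morphisms $F(i) \to F(i+1)$ produces a map from the spine of the nerve of $\omega$ into $\C$; since the spine inclusion is inner anodyne and $\C$ is an $\infty$-category, it extends to a functor $\tilde F \colon \omega \to \C$, and one checks $\h \tilde F = F$ (an $\omega$-diagram is determined by its objects and consecutive morphisms). As $\C$ is cocomplete I would form $L := \colim_{i<\omega} \tilde F(i)$ in $\C$ and set $\colim^w F := L$, regarded as an object of $\h\C$, equipped with the cocone obtained by applying $\h(-)$ to the colimit cocone in $\C$. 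The compactness isomorphism is then immediate: for $h$-compact $x$ the defining property gives $\colim_{i<\omega}[x, \tilde F(i)] \xrightarrow{\cong} [x, L]$, and $[x, \tilde F(i)] = [x, F(i)]$ since $\tilde F$ lifts $F$.

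It remains to verify the weak colimit property, which I expect to be the main point. For any object $c$ the universal property of the colimit gives an equivalence $\mathrm{map}_{\C}(L, c) \simeq \lim_{\omega^{\op}} \mathrm{map}_{\C}(\tilde F(i), c)$, a homotopy limit of the tower $\cdots \to \mathrm{map}_{\C}(\tilde F(1), c) \to \mathrm{map}_{\C}(\tilde F(0), c)$. A cocone over $F$ with vertex $c$ in $\h\C$ is precisely an element of $\lim_{\omega^{\op}} [\tilde F(i), c]$, and the restriction-along-the-cocone map $[L, c] \to \lim_{\omega^{\op}}[\tilde F(i), c]$ is $\pi_0$ of the above equivalence followed by the canonical comparison $\pi_0 \lim_{\omega^{\op}} \mathrm{map}_{\C}(\tilde F(i), c) \to \lim_{\omega^{\op}} \pi_0\,\mathrm{map}_{\C}(\tilde F(i), c)$. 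Using the explicit model of the homotopy limit of a tower as coherent sequences of points together with connecting paths, this comparison is surjective (equivalently, this is the surjectivity in the Milnor $\lim^1$ exact sequence). Surjectivity says exactly that every cocone over $F$ with vertex $c$ factors through $L$, which is the weak colimit property. I would finally remark that the weak generators of $\C$ play no role in this proposition; only the cocompleteness of $\C$ and the definition of $h$-compactness are used.
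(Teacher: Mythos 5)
Your proof is correct, and part (1) coincides with the paper's argument (the paper simply refers back to the proof of Proposition \ref{Brown-implies-adjoints}, where exactly your $\pi_0$-of-products and $\pi_0$-of-homotopy-pullback observations are made). In part (2) your key step is genuinely different from the paper's. Both proofs lift $F$ to $\tilde F \colon \omega \to \C$ and take $\colim^w F := \colim_{i<\omega} \tilde F(i)$, and both get the $h$-compactness isomorphism for free from the definition; but to verify the weak colimit property the paper exhibits $\colim \tilde F$ via the mapping telescope, i.e.\ as a pushout in $\C$ of coproducts, so that the weak colimit property follows purely formally from part (1): coproducts and weak pushouts in $\h\C$ assemble into a weak coequalizer, hence a weak sequential colimit. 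You instead compute $\mathrm{map}_{\C}(L,c) \simeq \lim_{\omega^{\op}} \mathrm{map}_{\C}(\tilde F(i),c)$ and invoke the surjectivity of $\pi_0$ of a homotopy limit of a tower onto $\lim \pi_0$ (the Milnor $\lim^1$ surjectivity), which, as you say, is visible from the explicit model: a vertex of the tower holim is just a choice of points and connecting edges, with no higher coherence, so any compatible family of components lifts. The two arguments are really two faces of the same telescope -- mapping out of the paper's telescope pushout produces precisely the homotopy pullback whose $\pi_0$-surjectivity you quote -- but your route buys a more direct, quantitative statement (the defect of uniqueness is exactly $\lim^1$), while the paper's route stays formal at the level of $\h\C$ and is the form directly consumed by Heller's theorem, which wants weak colimits built from coproducts and weak pushouts. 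One caution worth recording: the surjectivity $\pi_0 \lim \to \lim \pi_0$ is special to towers (more generally to diagrams whose lifting requires only $1$-dimensional coherence) and fails for general cofiltered limits; this is harmless here since only $\omega^{\op}$ occurs, and it is in fact the same phenomenon the paper flags in its discussion of why Proposition \ref{intermediate}(2) may fail for ranks $\beta > \omega$. Your closing remark that the weak generators are never used is accurate and consistent with the paper's proof, which likewise uses only local smallness, cocompleteness, and the definition of $h$-compactness.
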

\begin{proof}
As explained in the proof of Proposition \ref{Brown-implies-adjoints}, $\h\C$ inherits coproducts and weak pushouts from the coproducts and the pushouts 
in $\C$. This shows (1). For (2), we may lift the diagram $F$ to 
a diagram $\widetilde{F} \colon \omega \to \C$ and let $\colim^w F$ be the 
colimit of $\widetilde{F}$, regarded as an object in $\h \C$. This colimit can be obtained by a telescope construction which corresponds to a pushout diagram in $\C$ as follows 
$$
\xymatrix{
\coprod_i (F(i) \coprod F(i)) \ar[rr]^(0.6){(\mathrm{id}, F)} \ar[d]_{\nabla} && \coprod_i F(i) \ar[d] \\
\coprod_i F(i) \ar[rr] && \colim \widetilde{F}.
}
$$
It follows that $\colim^w F$ defines a weak colimit of $F$ in $\h \C$. Moreover, it has the required property with respect to every $h$-compact object by definition. 
\end{proof}

Using Proposition \ref{intermediate}, Heller's theorem \cite[Theorem 1.3]{He} yields the following result for compactly generated $\infty$-categories. The 
result is a small generalization of the Brown representability theorem in \cite[Theorem 1.4.1.2]{HA}. 

\begin{thm} \label{Heller}
Every compactly generated $\infty$-category satisfies Brown representability.
\end{thm}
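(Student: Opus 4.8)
The plan is to deduce the statement from Heller's purely categorical representability theorem \cite[Theorem 1.3]{He} applied to the homotopy category $\h\C$. The first observation is that, since $\Set$ is (the nerve of) an ordinary category, a functor $F \colon \C^{\op} \to \Set$ carries no higher-categorical information of $\C$: by the adjunction between the homotopy-category functor and the nerve, such an $F$ is the same datum as an ordinary functor $(\h\C)^{\op} \to \Set$, so I may regard $F$ as a homotopy functor on $\h\C$ throughout (this is also what makes the comparison with representable functors $[-,x]$, which live on $\h\C$, legitimate). Under this identification, condition (B1) asserts precisely that $F$ carries the small coproducts of $\h\C$ to products of sets, while condition (B2) asserts that $F$ carries the weak pushout squares of $\h\C$ to weak pullback squares of sets, i.e. that the Mayer--Vietoris comparison map is surjective. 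These are exactly the hypotheses that Heller imposes on a functor.

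The substance of the argument is then to check that $\h\C$ satisfies the structural axioms required by \cite[Theorem 1.3]{He}. Here Proposition \ref{intermediate} does the main work: part (1) supplies the small coproducts and weak pushouts, and part (2) supplies, for every sequential diagram $D \colon \omega \to \h\C$, a weak colimit $\colim^{w} D$ for which the comparison $\colim_{i<\omega}[x, D(i)] \to [x, \colim^{w} D]$ is a bijection for every $h$-compact object $x$. I would next record that the chosen set of weak generators $\mathcal G$ (Definition \ref{def-comp-gen}) provides exactly the set of compact generators in Heller's sense: each $g \in \mathcal G$ is $h$-compact, so by Proposition \ref{intermediate}(2) the functor $[g,-]$ commutes with sequential weak colimits in $\h\C$; and by Definition \ref{weak_gen_def} the objects of $\mathcal G$ jointly detect the isomorphisms of $\h\C$ (an arrow of $\C$ is an equivalence, equivalently an isomorphism of $\h\C$, precisely when it induces bijections on $[g,-]$ for all $g \in \mathcal G$). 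Together these facts exhibit $\h\C$ as a category of the type to which Heller's theorem applies.

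With these verifications in place, Heller's theorem immediately yields that any functor $(\h\C)^{\op} \to \Set$ satisfying the coproduct and Mayer--Vietoris conditions is representable. Applying this to the functor associated with $F$ produces an object $x \in \C$ and a natural isomorphism $F \cong [-, x]$, which is the desired conclusion; combined with the already-noted fact that every representable functor satisfies (B1)--(B2), this shows that $\C$ satisfies Brown representability in the sense of Definition \ref{Brown-rep-prop}.

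The main obstacle I anticipate lies not in any single computation but in the faithful translation between our set-up and Heller's axiomatic framework: one must match our notions of weak pushout, sequential weak colimit, and $h$-compact weak generator with the precise hypotheses of \cite[Theorem 1.3]{He}. In particular, the genuinely delicate point is the compactness condition governing the behaviour of $[g,-]$ on sequential weak colimits -- controlled here by Definition \ref{h-compact} together with Proposition \ref{intermediate}(2) -- which is exactly the finiteness input that Heller's inductive construction of the representing object requires. Once this dictionary is fixed, the theorem becomes a direct citation.
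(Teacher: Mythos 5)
Your proposal is correct and follows essentially the same route as the paper: regard $F$ as a functor on $\h\C$, use Proposition \ref{intermediate} to equip $\h\C$ with small coproducts, weak pushouts, and $\omega$-indexed weak colimits against which the $h$-compact weak generators of Definition \ref{def-comp-gen} behave compactly, and then cite Heller's theorem \cite[Theorem 1.3]{He}. The translation point you flag but leave open is settled in the paper by observing that Heller's (dual) cardinal-boundedness hypothesis demands the property of Definition \ref{h-compact} at all sufficiently large regular cardinals, whereas his proof only needs it at a single one, so $h$-compactness at $\omega$ suffices.
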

\begin{proof} 
Let $F \colon \C^{\op} \to \Set$ be a functor which satisfies (B1)-(B2), where $\C$ is a compactly generated $\infty$-category. (B1) implies that the induced 
functor $\h F \colon \h \C^{\op} \to \Set$ sends coproducts to products, since these agree with the coproducts in $\C$. Moreover, $\h F$ sends every weak pushout in $\h \C$ to a weak pullback, since this holds by (B2) for the choices of weak pushouts that arise from pushouts in $\C$. Then, using Proposition \ref{intermediate}, the representability of $\h F$  is an application of \cite[Theorem 1.3]{He}. (We note that the dual of \emph{right cardinally bounded} in \cite{He} is stronger than $h$-compactness in that it requires that the property of Definition \ref{h-compact} holds for any diagram indexed by a sufficiently large regular cardinal. However, the proof in \cite{He} only requires that a single such regular cardinal exists.)
\end{proof}

\begin{example} \label{loc-pres-cat-2}
By Theorem \ref{Heller} and Example \ref{loc-pres-cat}, every locally finitely presentable category satisfies Brown representability. Moreover, every locally presentable category satisfies Brown representability. This follows either from Proposition 
\ref{local-pres} since every locally presentable category is equivalent to a full reflective subcategory of a locally finitely presentable category or by applying 
\cite[Theorem 1.3]{He} directly. In particular, using Remark \ref{ordinary-cats-Brown}, we conclude that given a functor $F \colon \C \to \D$ between ordinary (locally small) categories, where $\C$ is locally presentable, $F$ is a left adjoint if and only if $F$ preserves small coproducts and weak pushouts.
\end{example}

\begin{example} \label{stable-fin-Brown}
By Theorem \ref{Heller} and Example \ref{example-spectra}, every stable finitely presentable $\infty$-category satisfies Brown representability. 
\end{example}

We point out that since the representability problem for a functor $\C^{\op} \to \Set$ obviously reduces to the problem of representing the induced functor $\h \C^{\op} \to \Set$, it is also natural to state Brown representability theorems in terms of properties of $\h \C$ instead of $\C$. Indeed, Heller's theorem \cite[Theorem 1.3]{He} is such a theorem as it applies to general categories with coproducts, weak pushouts and a set of weak generators for which Proposition \ref{intermediate}(2) holds for some sufficiently large regular cardinal $\beta$ (in place of $\omega$). Moreover, there are also several 
well-known Brown representability theorems for triangulated categories (see, for example, \cite{Ne}).

The difference between the two viewpoints of $\C$ and $\h \C$ practically disappears in the case of compactly generated $\infty$-categories, essentially because of Proposition \ref{intermediate}. On the other hand, the proof of Proposition \ref{intermediate}(2) may fail for larger ranks of $h$-compactness because there are obstructions in general for lifting diagrams in $\h \C$ indexed by $\beta > \omega$ and for the colimit of such a diagram in $\C$ to define a weak colimit in $\h \C$ (see \cite{Co} for related results). As a consequence, it becomes unclear how properties of $\h \C$ which are required for Brown representability can be obtained from analogous properties of 
$\C$, since the former may turn out to be exotic from the viewpoint of the latter. 
In the next subsection we will obtain an extension of Example \ref{stable-fin-Brown} to general stable presentable $\infty$-categories indirectly, that is, without referring to Heller's theorem for the case of higher ranks of $h$-compactness. 

\subsection{Stable presentable $\infty$-categories} The following general structure theorem for stable presentable $\infty$-categories is an immediate 
consequence of \cite[Proposition 1.4.4.9]{HA} (and Example \ref{example-spectra}).

\begin{thm} 
Every stable presentable $\infty$-category is equivalent to a localization of a compactly generated stable $\infty$-category.
\end{thm}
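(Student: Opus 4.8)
The plan is to present the given stable presentable $\infty$-category as a reflective localization of a category of spectrum-valued presheaves, and then to recognize that ambient category as compactly generated by invoking Example \ref{example-spectra}.

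First I would recall the structure theorem for stable presentable $\infty$-categories. By \cite[Proposition 1.4.4.9]{HA}, any stable presentable $\infty$-category $\C$ admits a small $\infty$-category $S$ together with an exact, accessible localization functor
\[
L \colon \mathrm{Fun}(S^{\op}, \mathcal{S}p) \longrightarrow \C
\]
exhibiting $\C$ as a reflective localization of the $\infty$-category $\mathrm{Fun}(S^{\op}, \mathcal{S}p)$ of spectrum-valued presheaves on $S$. Concretely, one can take $S$ to be (an essentially small model of) the full subcategory spanned by a set of $\kappa$-compact generators of $\C$ for a suitable regular cardinal $\kappa$, and the exactness of $L$ ensures that the localization is compatible with the stable structures on both sides.

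Next I would verify that $\mathrm{Fun}(S^{\op}, \mathcal{S}p)$ is a compactly generated stable $\infty$-category. Stability is immediate: since $\mathcal{S}p$ is stable and limits and colimits in a functor $\infty$-category are computed pointwise, the functor category $\mathrm{Fun}(S^{\op}, \mathcal{S}p)$ is again stable. For compact generation, observe that $\mathrm{Fun}(S^{\op}, \mathcal{S}p) = \mathcal{S}p^{K}$ with $K = S^{\op}$ a small simplicial set, so it is compactly generated by Example \ref{example-spectra} (equivalently, it is a stable finitely presentable $\infty$-category, which again falls under Example \ref{example-spectra}). Combining the two points, $\C$ is exhibited as a localization of the compactly generated stable $\infty$-category $\mathrm{Fun}(S^{\op}, \mathcal{S}p)$, which is the assertion.

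The step requiring care is the invocation of \cite[Proposition 1.4.4.9]{HA}: one must check that the cited result produces the ambient $\infty$-category in precisely the form $\mathrm{Fun}(S^{\op}, \mathcal{S}p)$ covered by Example \ref{example-spectra}, and that the resulting localization functor $L$ is exact, so that the stable structure on $\C$ agrees with the one induced from the localization. Once this identification is secured, the remainder of the argument is purely formal, which is why the statement can be regarded as an \emph{immediate} consequence of the cited results.
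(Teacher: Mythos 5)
Your proof is correct and takes essentially the same route as the paper, which states the theorem as an immediate consequence of \cite[Proposition 1.4.4.9]{HA} together with Example \ref{example-spectra} --- precisely the two ingredients you combine. Your extra verification that $\mathrm{Fun}(S^{\op}, \mathcal{S}p)$ is stable and compactly generated merely spells out what the paper leaves implicit.
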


Thus, combining this with Theorem \ref{Heller} and Proposition \ref{local-pres}, we obtain the following class of examples of $\infty$-categories which satisfy 
Brown representability. 

\begin{thm} \label{mainTHM}
Every stable presentable $\infty$-category satisfies Brown representabi\-lity.
\end{thm}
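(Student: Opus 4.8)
The plan is to obtain the statement by a purely formal assembly of the three results that immediately precede it: the structure theorem for stable presentable $\infty$-categories, the Brown representability theorem for compactly generated $\infty$-categories (Theorem \ref{Heller}), and the stability of Brown representability under localization (Proposition \ref{local-pres}). No new technical ingredient is needed; the content lives entirely in those inputs.

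Concretely, I would start with an arbitrary stable presentable $\infty$-category $\D$ and apply the structure theorem just established to produce a compactly generated stable $\infty$-category $\C$ together with an identification of $\D$ with a localization of $\C$. Theorem \ref{Heller} then applies directly to $\C$ and shows that $\C$ satisfies Brown representability. Before invoking Proposition \ref{local-pres} I would verify that its hypotheses hold for $\C$: by Definition \ref{def-comp-gen} a compactly generated $\infty$-category is locally small and admits all small colimits, so $\C$ is in particular locally small and cocomplete, as required. Proposition \ref{local-pres} then transports Brown representability from $\C$ to its localization $\D$, which is exactly the desired conclusion.

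The only point requiring a little care — and the place where I would spend a sentence of justification rather than any real work — is the compatibility of the two uses of the word ``localization''. The structure theorem exhibits $\D$ as a localization of $\C$ in the sense of \cite{HA} (an accessible, indeed reflective, localization of a presentable $\infty$-category), whereas Proposition \ref{local-pres} is phrased for a localization realized as a full reflective subcategory with reflector $L$. These agree: the localization functor exhibiting $\D$ as a localization of $\C$ is by construction left adjoint to a fully faithful inclusion, so $\D$ may be taken to be a full reflective subcategory of $\C$ precisely as the proposition demands. With this identification the three inputs combine with no further argument, and so there is no substantive obstacle beyond the already-proven Theorem \ref{Heller} and the cited structure theorem.
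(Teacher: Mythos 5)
Your proposal is correct and matches the paper's own proof, which likewise assembles the structure theorem for stable presentable $\infty$-categories, Theorem \ref{Heller}, and Proposition \ref{local-pres}; your extra remarks on local smallness, cocompleteness, and the reflective-subcategory reading of ``localization'' are exactly the implicit checks the paper leaves to the reader.
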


\begin{coro} \label{mainTHM-2}
Let $\mathcal{T}$ be a triangulated category which is equivalent to the homotopy category of a stable presentable $\infty$-category, as triangulated categories. Then a homological functor $F: \mathcal{T}^{\op} \to \mathrm{Ab}$ is representable 
if and only if it sends small coproducts in $\mathcal{T}$ to products.
\end{coro}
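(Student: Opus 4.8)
The plan is to derive the corollary from the Brown representability of stable presentable $\infty$-categories (Theorem \ref{mainTHM}). Fix a stable presentable $\infty$-category $\C$ together with the given equivalence of triangulated categories $\mathcal{T} \simeq \h \C$, and transport $F$ to a homological functor $\h \C^{\op} \to \mathrm{Ab}$ which we continue to denote by $F$. The \emph{only if} direction is immediate: a representable functor $[-, t] \colon \mathcal{T}^{\op} \to \mathrm{Ab}$ is homological and sends a small coproduct $\coprod_i x_i$ in $\mathcal{T}$ to the product $\prod_i [x_i, t]$, since mapping out of a coproduct is a product.

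For the converse, I would first regard $F$ as a functor $\C^{\op} \to \Set$ by forgetting the abelian group structure; since $\Set$ is a $1$-category this datum factors through $\h \C^{\op}$ and agrees with the underlying set-valued functor of $F$. I would then check conditions (B1)--(B2) of Definition \ref{Brown-rep-prop}. Condition (B1) is exactly the hypothesis that $F$ sends small coproducts to products, using that a small coproduct in $\C$ is also a coproduct in $\h \C = \mathcal{T}$. For (B2), the essential input is that a pushout square in the stable $\infty$-category $\C$ is simultaneously a pullback square, and hence determines in $\h \C$ a distinguished triangle of Mayer--Vietoris type
\[
x \longrightarrow y \oplus z \longrightarrow w \longrightarrow x[1].
\]
Applying the homological functor $F$ yields a long exact sequence of abelian groups, and exactness at the term $F(y \oplus z) \cong F(y) \times F(z)$ identifies the image of the map $F(w) \to F(y) \times F(z)$ with the kernel of the difference map $F(y) \times F(z) \to F(x)$, that is, with the fibre product $F(y) \times_{F(x)} F(z)$. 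In particular the canonical map $F(w) \to F(y) \times_{F(x)} F(z)$ is surjective, which is precisely condition (B2).

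With (B1)--(B2) verified, Theorem \ref{mainTHM} produces an object $t \in \C$ and a natural isomorphism $\eta \colon [-, t] \xrightarrow{\cong} F$ of $\Set$-valued functors on $\h \C^{\op}$. The remaining step is to upgrade $\eta$ to an isomorphism of $\mathrm{Ab}$-valued functors. By the Yoneda lemma, $\eta$ is given by $\eta_x(\varphi) = F(\varphi)(u)$ for the universal element $u = \eta_t(1_t) \in F(t)$, so it suffices to show that each $\eta_x$ is a group homomorphism, i.e. that $F(\varphi + \psi) = F(\varphi) + F(\psi)$. This is the additivity of $F$ on morphisms, which holds because a homological functor preserves finite biproducts -- a biproduct $x \oplus z$ sits in a split distinguished triangle whose associated long exact sequence splits -- and a biproduct-preserving functor between additive categories is automatically additive. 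Hence $F \cong [-, t]$ as homological functors, so $F$ is representable.

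The two steps I expect to require the most care are the identification in (B2) of the fibre product with the kernel of the Mayer--Vietoris map (which depends on matching the sign conventions in the distinguished triangle with the two structure maps of the pushout, but is otherwise routine) and, above all, the final upgrade from set-level to $\mathrm{Ab}$-level representability; the latter is where the homological -- rather than merely set-theoretic -- nature of $F$ is genuinely used.
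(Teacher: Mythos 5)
Your proposal is correct and follows essentially the same route as the paper: transport $F$ along $\mathcal{T}\simeq \h\C$, verify (B1)--(B2) for the induced $\Set$-valued functor on $\C^{\op}$ (with (B2) coming from the Mayer--Vietoris distinguished triangle of a pushout square in the stable $\infty$-category), and invoke Theorem \ref{mainTHM}. The paper's proof is terser -- it declares (B2) ``immediate'' from homologicity and does not comment on upgrading the set-level isomorphism to one of $\mathrm{Ab}$-valued functors -- so your explicit long-exact-sequence argument and the Yoneda/additivity step are welcome elaborations of the same argument rather than a different one.
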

\begin{proof}
The condition is clearly satisfied by representable functors. For the converse, suppose that $\mathcal{T} \stackrel{\Delta}{\simeq} \h\C$, where $\C$ is presentable and stable. By Theorem \ref{mainTHM}, it suffices to show that the functor 
$$\C^{\op} \to \h\C^{\op} \simeq \mathcal{T}^{\op} \xrightarrow{F} \mathrm{Ab} \to \Set_*$$
satisfies (B1)--(B2). (B1) is satisfied by assumption. (B2) is an immediate consequence of the fact that $F$ is homological. 
\end{proof}

\end{document}